\newcommand\cyr{%
\renewcommand\rmdefault{wncyr}%
\renewcommand\sfdefault{wncyss}%
\renewcommand\encodingdefault{OT2}%
\normalfont
\selectfont}
\DeclareTextFontCommand{\textcyr}{\cyr}
\newtheorem{thm}{Theorem}[section]
\newtheorem{crl}[thm]{Corollary}
\newtheorem{lmm}[thm]{Lemma}
\newtheorem{prp}[thm]{Proposition}
\newtheorem{xmp}[thm]{Example}
\newtheorem{rmk}[thm]{Remark}
\newtheorem{dfn}[thm]{Definition}
\newtheorem{ntt}[thm]{Notations}
\newtheorem{cnt}[thm]{Construction}
\newenvironment{customthm}[1]
  {\innercustomthm}
  {\endinnercustomthm}
\DeclareMathOperator{\orb}{\mathcal{O}}
\DeclareMathOperator{\eqdef}{\text{:=}}
\DeclareMathOperator{\Dec}{Dec}
\DeclareMathOperator{\basis}{\mathcal{S}}
\DeclareMathOperator{\tbasis}{\mathcal{T}}
\DeclareMathOperator{\gbasis}{\orb\basis}
\DeclareMathOperator{\Tree}{\tau}
\DeclareMathOperator{\sTree}{\mathtt{t}}
\DeclareMathOperator{\tTree}{\mathfrak{t}}
\DeclareMathOperator{\gTree}{\orb\mathfrak{t}}
\DeclareMathOperator{\shuffle}{Sh}
\newcommand\restr[2]{{\left.\kern-\nulldelimiterspace 
  \vphantom{\big|}  \right|_{#2}  }}
\DeclareMathOperator{\Mod}{Mod_{\mathbb{K}}}
\DeclareMathOperator{\ModSigma}{Mod_{\mathbb{K}}^{\SymGrp{}}}
\DeclareMathOperator{\ModN}{Mod_{\mathbb{K}}^{\mathbb{N}}}
\DeclareMathOperator{\Ind}{Ind}
\newcommand{\Germ}{\raise -10pt\hbox{ \begin{picture}(20,20)
\put(7,16){\circle*{3}}\put(1,16){$\scriptscriptstyle 2$}
\put(7,6){\line(0,1){10}}
\put(7,6){\circle{3}} \put(1,6){$\scriptscriptstyle 1$}
\end{picture}}}
\newcommand{\Root}{\hbox{ \begin{picture}(10,10)
\put(0,2){\circle{3}}\put(3,0){$\scriptscriptstyle 1$}
\end{picture}}}
\newcommand{\FlowerTwo}[3]{
\raise -10pt\hbox{ \begin{picture}(35,30) \put(10,6){\circle{3}}   \put(0,16){\circle*{3}}   \put(20,16){\circle*{3}} 
\put(10,6){\line(-1,1){10}}  \put(10,6){\line(1,1){10}}
\put(10,-1){$\scriptscriptstyle #1$}   \put(-4,20){$\scriptscriptstyle #2$}   \put(20,20){$\scriptscriptstyle #3 $}  
\end{picture}}
}
\newcommand{\Flower}[1]{
\raise -10pt\hbox{ \begin{picture}(35,30) \put(10,6){\circle{3}}   \put(0,16){\circle*{3}}   \put(20,16){\circle*{3}} \put(10,18){\circle*{1}}  \put(4,17){\circle*{1}} \put(7,18){\circle*{1}} \put(16,17){\circle*{1}} \put(13,18){\circle*{1}}
\put(10,6){\line(-1,1){10}}  \put(10,6){\line(1,1){10}}
\put(10,-1){$\scriptscriptstyle 1$}   \put(-4,20){$\scriptscriptstyle 2$}   \put(20,20){$\scriptscriptstyle #1 $}  
\end{picture}}
}
\newcommand{\LFlower}[4]{\raise -10pt\hbox{ \begin{picture}(35,35)
\put(4,26){\circle*{3}}\put(-2,26){$\scriptscriptstyle #4$}\put(4,16){\line(0,1){10}}
    
\put(4,16){\circle*{3}}\put(-2,16){$\scriptscriptstyle #2$} \put(24,16){\circle*{3}} \put(26,16){$\scriptscriptstyle #3$}  
\put(14,6){\line(-1,1){10}} \put(14,6){\line(1,1){10}}
    
\put(14,6){\circle{3}} \put(14,-1){$\scriptscriptstyle #1$}
\end{picture}}}
\newcommand{\RFlower}[4]{\raise -10pt\hbox{ \begin{picture}(35,35)
\put(24,26){\circle*{3}}\put(26,26){$\scriptscriptstyle #4$}\put(24,16){\line(0,1){10}}
    
\put(4,16){\circle*{3}}\put(-2,16){$\scriptscriptstyle #2$} \put(24,16){\circle*{3}} \put(26,16){$\scriptscriptstyle #3$}  
\put(14,6){\line(-1,1){10}} \put(14,6){\line(1,1){10}}
    
\put(14,6){\circle{3}} \put(14,-1){$\scriptscriptstyle #1$}
\end{picture}}}
\newcommand{\CFlower}[4]{\raise -10pt\hbox{ \begin{picture}(25,30)
\put(10,16){\circle*{3}}       
\put(10,6){\circle{3}}   \put(0,16){\circle*{3}}   \put(20,16){\circle*{3}}
\put(10,6){\line(-1,1){10}}  \put(10,6){\line(0,1){10}}  \put(10,6){\line(1,1){10}}
\put(10,-1){$\scriptscriptstyle #1$}   \put(-4,20){$\scriptscriptstyle #2$}   \put(10,20){$\scriptscriptstyle #3$} \put(20,20){$\scriptscriptstyle #4$}  
\end{picture}}
}
\newcommand{\SymGrp}[1]{\mathbb{S}_{#1}}
\DeclareMathOperator{\K}{\mathbb{K}}
\DeclareMathOperator{\homs}{Hom}
\newcommand{\Hom}[3]{\homs_{#1}(#2,#3)}
\DeclareMathOperator{\stab}{Stab}
\newcommand{\Stab}[2]{\stab_{#1}(#2)}
\author{Andrea Cesaro}
\title{\bf On $ PreLie $ algebras with divided symmetries}
\begin{document}

\maketitle
\begin{abstract}
We study an analogue of the notion of $ p $-restricted $ Lie $-algebra and of the notion of divided power algebra for $ PreLie $-algebras. We deduce our definitions from the general theory of operads. We consider two variants $\Lambda(P,-)$ and $\Gamma(P,-)$ of the monad $S(P,-)$ which governs the category of algebras classically associated to an operad $P$. For the operad $PreLie$ corresponding to $PreLie$-algebras, we prove that the category of algebras over the monad $\Lambda(PreLie,-)$ is identified with the category of $p$-restricted $PreLie$-algebras introduced by A. Dzhumadil'daev. We give an explicit description of the structure of an algebra over the monad  $\Gamma (PreLie,-)$ in terms of brace-type operations and we compute the relations between these generating operations. We prove that classical examples of  $PreLie$-algebras occurring in deformation theory actually form $\Gamma (PreLie,-)$-algebras. 
\end{abstract}
\section*{Introduction}
In this chapter, we study an analogue of the notion of $ p $-restricted $ Lie $-algebra and of the notion of divided power algebra for $ PreLie $-algebras.

$ PreLie $-algebras were introduced by Gerstenhaber in \cite{article:Gerstenhaber63} to encode structures related to the deformation complex of algebras. In recent years, applications of $ PreLie $-algebras appear in many other topics. Notably it has been discovered that they play a fundamental role in Connes-Kreimer's renormalization methods.

The category of $ PreLie $-algebras is associated to an operad denoted by $ PreLie $. To define our notion of $ PreLie $-algebras with divided symmetries, we use the general theory of B. Fresse \cite{article:Fresse00}, who showed how to associate a monad $ \Gamma(P,-) $ to any operad $P$ in order to encode this notion of algebra with divided symmetries. 

Recall that the usual category of algebras associated to an operad $P$ is governed by a monad $S(P,-)$ given by a generalized symmetric algebra functor with coefficients in the components of the operad $P$. To define $\Gamma(P,-)$ we merely replace the modules of coinvariant tensors, which occur in the generalized symmetric algebra construction, by modules of invariants. We denote by $\Lambda (P,-)$ the monad given by the image of the trace map between $ S(P,-) $ and $\Gamma(P,-)$. For short, we call $\Gamma P$-algebras the category of algebras governed by the monad $\Gamma(P,-)$, and we similarly call $ \Lambda P $-algebras the category of algebras governed by the monad $\Lambda(P,-)$. It turns out that many variants of algebra categories associated to these monads are governed by these monads. For instance, for the operad $P= Lie$ a $\Lambda Lie$-algebra is equivalent to a Lie algebra equipped with an alterned Lie bracket $[x,x]=0$, while the ordinary category of algebras over the operad $ Lie $ only depicts Lie algebras equipped with an antisymmetric Lie bracket $[x,y]=-[y,x]$ (which differs from the latter when the ground field has characteristic two). The category of $\Gamma Lie$-algebras, on the other hand, turns out to be equivalent to the classical notion of a $p$-restricted Lie algebra, where $p$ is the characteristic of the ground field (see \cite{article:Fresse00} and \cite{incollection:Fresse04})

We aim to give a description in terms of generating operations of the structure of an algebra over the monads $ \Lambda(PreLie,-) $ and $ \Gamma(PreLie,-) $. Our main motivations come from the applications of $PreLie$-algebras in deformation theory. We will see that significant examples of $PreLie$-algebras occurring in deformation theory are actually $\Gamma PreLie$-algebras.

To be explicit, recall that a $PreLie$-algebra is a module $V$ equipped with an operation $\{-,-\}:V\otimes V\longrightarrow V$ such that:
\[ \{\{x,y\},z\}-\{x,\{y,z\}\}=\{\{x,z\},y\}-\{x,\{z,y\}\}, \]
for all $x$, $y$, and $z$ in $V$.

First, we study the algebras over $ \Lambda(PreLie,-) $. We prove that these algebras are identified with the notion of $ p $-restricted $ PreLie $-algebras in the sense of \cite{article:Dzhumadil01}. A $ p $-restricted $ PreLie $-algebra is a $ PreLie $-algebra where the following relation is satisfied
\[\{\ldots\{x,\underbrace{y\},\ldots\} y}_{p} \}= \{x,\{\ldots\{\underbrace{y,y\}\ldots \}y}_{p}\}. \]
 Our result explicitly reads:

\begin{customthm}{A}[Theorem \ref{thmpRes}]
We assume that the ground ring $\mathbb{K}$ is a field of characteristic $p$. A $\Lambda PreLie$-algebra is equivalent to a $\mathbb{K}$-module $V$ equipped with an operation $\{-,-\}: V\otimes V\longrightarrow V$ satisfying the $PreLie$-relation and the $p$-restricted $PreLie$-algebra relation.
\end{customthm}

Let $ V $ be a free $\mathbb{K}$-module with a fixed basis. We prove the existence of an isomorphism of graded free $\mathbb{K}$-modules between $ S(PreLie,V) $ and $ \Gamma(PreLie,V) $. Using this isomorphism we express the composition morphism of the free algebra $ \Gamma(PreLie,V) $ and find a normal form for its elements. We combine these results to give a presentation of $ \Gamma(PreLie,-) $: this monad is determined  by $ n+1 $-fold polynomial ``corollas'' operations $ \lbrace -;\underbrace{-,\ldots,-}_{n}\rbrace_{r_{1},\ldots,r_{n}} $ of degree $ (1,r_{1},\ldots,r_{n}) $ and which satisfy some relations. We obtain the following theorem:

\begin{customthm}{B}[Theorem \ref{ThmRep}]
If $V$ is a free module over the ground ring $\mathbb{K}$, then providing the module $V$ with a $\Gamma PreLie$-algebra structure is equivalent to providing $V$ with a collection of polynomial maps 
\[\lbrace -;\underbrace{-,\ldots,-}_{n}\rbrace_{r_{1},\ldots,r_{n}}:V\times\underbrace{V\times\ldots\times V}_{n}\longrightarrow V, \]
where $r_{1},\ldots, r_{n}\in \mathbb{N}$ and which are linear in the first variable and the following relations hold: 
\begin{align*}
\tag{1}
\lbrace x;y_{1},\ldots, y_{n}\rbrace_{r_{\sigma(1)},\ldots,r_{\sigma(n)}} =\lbrace x;y_{\sigma^{-1}(1)},\ldots, y_{\sigma^{-1}(n)}\rbrace_{r_{1},\ldots,r_{n}}, 
\intertext{for any $ \sigma\in \SymGrp{n} $;}
\end{align*}

\vspace{-40pt}
\begin{multline*}
\tag{2}
\lbrace x;y_{1},\ldots,y_{i-1},y_{i},y_{i+1},\ldots, y_{n}\rbrace_{r_{1},\ldots, r_{i-1}, 0, r_{i+1},\ldots,r_{n}}=\\
\lbrace x;y_{1},\ldots,y_{i-1},y_{i+1},\ldots, y_{n}\rbrace_{r_{1},\ldots, r_{i-1}, r_{i+1},\ldots,r_{n}},
\end{multline*}

\begin{align*}
\tag{3}
\lbrace x;y_{1},\ldots,\lambda y_{i},\ldots, y_{n}\rbrace_{r_{1},\ldots, r_{i},\ldots,r_{n}}=\lambda^{r_{i}}\lbrace x;y_{1},\ldots, y_{i},\ldots, y_{n}\rbrace_{r_{1},\ldots, r_{i},\ldots,r_{n}},
\intertext{for any $\lambda$ in $\mathbb{K}$;}
\end{align*}
if $y_{i}=y_{i+1}$
\begin{multline*}
\tag{4}
\lbrace x;y_{1},\ldots, y_{i}, y_{i+1},\ldots, y_{n}\rbrace_{r_{1},\ldots, r_{i},r_{i+1},\ldots,r_{n}}=\\
\binom{r_{i}+r_{i+1}}{r_{i}} \lbrace x;y_{1},\ldots, y_{i}, y_{i+2},\ldots, y_{n}\rbrace_{r_{1},\ldots, r_{i}+r_{i+1},r_{i+2},\ldots,r_{n}}.
\end{multline*}

\begin{align*}
\tag{5}
\lbrace x; y_{1},\ldots,y_{i-1},a+b,y_{i+1},\ldots, y_{n}\rbrace_{r_{1},\ldots, r_{i},\ldots,r_{n}}=\sum_{s=0}^{r_{i}} \lbrace x; y_{1},\ldots,a,b,\ldots, y_{n}\rbrace_{r_{1},\ldots, s, r_{i}-s,\ldots,r_{n}},
\end{align*}

\begin{align*}
\tag{6}
\lbrace -;\rbrace = id,
\end{align*}

\vspace{-20pt}
\begin{multline*}
\tag{7}
\lbrace \lbrace x;y_{1},\ldots, y_{n} \rbrace_{r_{1},\ldots,r_{n}};z_{1},\ldots, z_{m} \rbrace_{s_{1},\ldots,s_{m}} = \\
\shoveright{\sum_{s_{i}=\beta_{i}+\sum\alpha_{i}^{\centerdot,\centerdot}}\dfrac{1}{\prod (r_{j}!)} \lbrace x;\lbrace y_{1};z_{1},\ldots, z_{m} \rbrace_{\alpha_{1}^{1,1},\ldots,\alpha_{m}^{1,1}},\ldots,\lbrace y_{1};z_{1},\ldots, z_{m} \rbrace_{\alpha_{1}^{1,r_{1}},\ldots,\alpha_{m}^{1,r_{1}}},} \\ 
\shoveright{\ldots, \lbrace y_{n};z_{1},\ldots, z_{m} \rbrace_{\alpha_{1}^{n,1},\ldots,\alpha_{m}^{n,1}},\ldots,\lbrace y_{n};z_{1},\ldots, z_{m} \rbrace_{\alpha_{1}^{n,r_{n}},\ldots,\alpha_{m}^{n,r_{n}}},} \\
z_{1},\ldots, z_{m} \rbrace_{1,\ldots, 1,\beta_{1},\ldots,\beta_{m}},
\end{multline*}
where, to give a sense to the latter formula, we use that the denominators $ r_{j}! $ divide the coefficient of the terms of the reduced expression which we get by applying relations (1) and (4) to simplify terms with repeated inputs on the right hand side (see Example \ref{xmp:Rel7}).
\end{customthm}

We give explicit examples of $ \Gamma PreLie $-algebras in the last section. Notably we explain that $\Gamma PreLie$-algebras naturally occur in the study of $ Brace $-algebras in characteristic different from $ 0 $. The already alluded to applications of $\Gamma PreLie$-algebras in deformation theory actually arise from this relationship.

\section*{Contents}
We devote sections $ 1-2 $ to general recollections on the operadic background of our constructions and to the definition of the operad $ PreLie $. 

In section $ 3 $ we construct a normal form for $ S(PreLie,V) $ and a basis for $ \Gamma(PreLie,V) $. We establish the equivalence between $ \Lambda PreLie $-algebras and $ p $-restricted $ PreLie $-algebras in section $ 4 $. We give the construction of a presentation of $ \Gamma(PreLie,-) $ in section $ 5 $. We conclude with examples of $ \Gamma PreLie $-algebras in section $ 6 $.

\section{Operads and their monads}

In this section, we briefly survey the general definitions of operad theory which we use in the chapter. This section does not contain any original result. We follow the presentation of \cite{article:Fresse00} for the definition of the monads $ \Lambda(P,-) $ and $ \Gamma(P,-) $ associated to an operad. We refer to the books \cite{book:MarklSchniderStasheff02}, \cite{book:LodayVallette12}, and \cite{book:Fresse09} for a comprehensive account of the theory of operads.
We work in a category of modules, $ \Mod $, over a fixed commutative ground ring $ \K $. For simplicity, we assume that $\K$ is a field in the  
statement of the general results and in the account of the general constructions of this section. We only consider the general case of a ring in concluding remarks at the end of each subsection (\ref{subsec:SMod}-\ref{subsec:NonSymOp}). We explain in these remarks the extra assumptions which we need to make our constructions work when we work over a ring.

\subsection{$ \SymGrp{} $-Modules}\label{subsec:SMod}
We recall the notion of $ \SymGrp{} $-module, which underlies the notion of operad, and the definition of three monoidal structures on the category of $ \SymGrp{} $-modules.
\begin{dfn}
We denote by $ \SymGrp{n} $ the symmetric group on a finite set of $ n $ elements. An $ \SymGrp{} $-module $ M $ is a collection $ \lbrace M(n) \rbrace_{n\in \mathbb{N}} $ where $M(n)$ is an $ \SymGrp{n} $-$ \mathbb{K} $-module for each $n\in \mathbb{N}$.

A morphism between $ \SymGrp{} $-modules $ f:M\longrightarrow N $ is a collection $ \lbrace f_{n}:M(n)\longrightarrow N(n)\rbrace_{n\in\mathbb{N}} $ where $f_{n}$ is a morphism of $ \SymGrp{n} $-$ \mathbb{K} $-modules for each $n\in \mathbb{N}$.

We denote by $ \ModSigma $ the category of $ \SymGrp{} $-modules.
\end{dfn}

We recall the definition of generalized symmetric (respectively, divided symmetries) tensors associated to an $ \SymGrp{} $-module. These functors determine the monads that define algebras over an operad and divided symmetries algebras over an operad.
\begin{dfn}
We denote by $ End(\Mod) $ the category of endofunctors of $ \Mod $. Let $ V $ be an $\mathbb{K}$-module and $M$ be an $\SymGrp{}$-module. On $ V^{\otimes n} $ the monoidal structure of the tensor product induces a natural $ \SymGrp{n} $-action. The $\K$-module $ M(n)\otimes V^{\otimes n} $ is equipped with the diagonal $ \SymGrp{n} $ action. The Schur functor $ S(M,-):\Mod\longrightarrow\Mod $ is defined as:
\begin{equation*}
 \begin{split}
 S(M,V)\eqdef\bigoplus_{n}M(n)\otimes_{\SymGrp{n}}V^{\otimes n},
 \end{split}
\end{equation*}
where $ \otimes_{\SymGrp{n}} $ means the  $\mathbb{K}$-module of co-invariants of the tensor product $ M(n)\otimes V^{\otimes n} $ under the diagonal action; and the coSchur functor $ \Gamma(M,-):\Mod\longrightarrow\Mod $ is defined as:
\begin{equation*}
 \begin{split}
 \Gamma(M,V)\eqdef\bigoplus_{n}M(n)\otimes^{\SymGrp{n}}V^{\otimes n},
 \end{split}
\end{equation*}
where $ \otimes^{\SymGrp{n}} $ means the  $\mathbb{K}$-module of invariants.

We then have two functors $ S:\ModSigma\longrightarrow End(\Mod) $ defined as $ S(M)\mapsto S(M,-) $, and  $ \Gamma:\ModSigma\longrightarrow End(\Mod) $ defined as $ \Gamma(M)\mapsto \Gamma(M,-) $.
\end{dfn}
\begin{rmk} 
The functors $ S(M,-) $ and $ \Gamma(M,-) $ are full and faithful when the ground ring is an infinite field. 
\end{rmk}

Between the coinvariant space and the invariant space there is a map called the trace (or norm) map.
\begin{dfn}
Let $M$ be an $ \SymGrp{} $-module. The trace map is the natural transformation $ Tr:S(M,-) \longrightarrow\Gamma (M,-) $ such that:
\begin{equation*}
 Tr(m\otimes v_{1}\otimes\ldots\otimes v_{n})=\sum_{\sigma\in\SymGrp{n}}\sigma^{\ast} (m\otimes v_{1}\otimes\ldots\otimes v_{n}),
\end{equation*}
for each $m\in M$, $ v_{1},\ldots v_{n}\in V $, and where we take the diagonal action of $\sigma\in \SymGrp{n}$ on the tensor $m\otimes v_{1}\otimes\ldots \otimes v_{n}\in M(n)\otimes V^{\otimes n}$.
\end{dfn}
\begin{rmk}\label{TraceIso}
The natural transformation $ Tr $ is an isomorphism in characteristic $ 0 $, but this is no longer the case in positive characteristic.
\end{rmk}
\begin{dfn}\label{DfnMonStr}
We consider three monoidal structures on $ \ModSigma $, let $ M $, $ N $ be two $ \SymGrp{} $-modules:
\begin{enumerate}
\item the tensor product $ -\boxtimes -:\ModSigma\times \ModSigma\longrightarrow \ModSigma $ of $ \SymGrp{} $-modules, defined by
\begin{equation*}
 \begin{split}
 (M\boxtimes N)(n)\eqdef \bigoplus_{i+j=n}\Ind^{\SymGrp{n}}_{\SymGrp{i}\times\SymGrp{j}}M(i)\otimes N(j),
 \end{split}
\end{equation*}
and whose unit is the $\SymGrp{}$-module such that $ \mathbb{K}(n)=\begin{cases} \mathbb{K} & \mbox{if } n = 0 \\ 
 0 & \mbox{if } n\neq 0, \\
 \end{cases} $
\item the coinvariant composition product $ -\utilde{\Box} -:\ModSigma\times \ModSigma\longrightarrow \ModSigma  $ of $ \SymGrp{} $-modules, defined by
\begin{equation*}
 \begin{split}
 M\utilde{\Box} N\eqdef \bigoplus_{r}M(r)\otimes_{\SymGrp{r}} N^{\boxtimes r},
 \end{split}
\end{equation*}
and whose unit is the $\SymGrp{}$-module such that $ I(n)=\begin{cases} \mathbb{K} &\mbox{if } n = 1 \\ 
 0 &\mbox{if } n\neq 1, \\
 \end{cases} $.
\item and the invariant composition product $ -\tilde{\Box} -:\ModSigma\times \ModSigma\longrightarrow \ModSigma  $ of $ \SymGrp{} $-modules, defined by
\begin{equation*}
 \begin{split}
 M\tilde{\Box} N\eqdef \bigoplus_{r}M(r)\otimes^{\SymGrp{r}} N^{\boxtimes r},
 \end{split}
\end{equation*}
with the same unit object as the coinvariant tensor product.
\end{enumerate} 
The tensor product $-\boxtimes -$ is symmetric, while the composition products $-\utilde{\Box} -$ and $-\tilde{\Box} -$ are not.
\end{dfn}

The two functors $ S $ and $ \Gamma $ are monoidal, more precisely:
\begin{prp}\label{ReflectingProducts}
The functors $ S:\ModSigma\longrightarrow End(\Mod) $ and $ \Gamma:\ModSigma\longrightarrow End(\Mod) $ define: 
\begin{enumerate}
\item strongly symmetric monoidal functors 

$ \xymatrix{
(\ModSigma,\boxtimes,\mathbb{K})\ar[r]^-{S} & (End(\Mod),\otimes,\mathbb{K})}$ and $\xymatrix{
(\ModSigma,\boxtimes,\mathbb{K})\ar[r]^-{\Gamma} & (End(\Mod),\otimes,\mathbb{K}),}$

where $ \otimes $ is the pointwise tensor product, inherited from the tensor product of  $\mathbb{K}$-modules, on the category of functors;
\item strongly monoidal functors 

$\xymatrix{
(\ModSigma,\utilde{\Box},\mathbb{K})\ar[r]^-{S} & (End(\Mod),\circ,Id)}$ and $\xymatrix{
(\ModSigma,\tilde{\Box},\mathbb{K})\ar[r]^-{\Gamma} & (End(\Mod),\circ,Id),}$

where $ \circ $ is the composition of functors.
\end{enumerate}
\end{prp}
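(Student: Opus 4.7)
My plan is to check the monoidality isomorphisms term-by-term and then verify the unit, associator, and symmetry axioms; I would treat the Schur functor $S$ first and then adapt the argument to $\Gamma$ by exchanging coinvariants for invariants.

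For part (1), I would prove $S(M \boxtimes N, V) \cong S(M, V) \otimes S(N, V)$ by expanding the arity $n$ piece as $(M \boxtimes N)(n) = \bigoplus_{i+j=n} \Ind^{\SymGrp{n}}_{\SymGrp{i}\times\SymGrp{j}}(M(i) \otimes N(j))$ and invoking the standard tensor--hom identity $(\Ind^{G}_{H} A) \otimes_{G} B \cong A \otimes_{H} \rest^{G}_{H} B$, together with the observation that coinvariants with respect to a product group factor as a tensor product of coinvariants. For $\Gamma$ the same computation works once we use the projection formula and the equality $\Ind = \mathrm{coInd}$ for finite groups, which gives $(\Ind^{G}_{H} A) \otimes^{G} B \cong A \otimes^{H} \rest^{G}_{H} B$. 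The unit check is immediate since $\K$ is concentrated in arity $0$, and naturality plus the hexagon in the symmetric case reduce to rearranging the factor-by-factor bijections just constructed.

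For part (2), I would reduce $S(M \utilde{\Box} N, V) \cong S(M, S(N, V))$ to part (1) by writing
\[S(M, S(N, V)) = \bigoplus_{r} M(r) \otimes_{\SymGrp{r}} S(N, V)^{\otimes r} \cong \bigoplus_{r} M(r) \otimes_{\SymGrp{r}} S(N^{\boxtimes r}, V),\]
and then pushing the $\SymGrp{r}$-coinvariants through the direct sum defining $S(N^{\boxtimes r}, V)$. This is legal because coinvariants commute with colimits and because the second factor $V^{\otimes n}$ carries the trivial $\SymGrp{r}$-action. The unit check reduces to $S(I, V) = I(1) \otimes V = V$.

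The delicate step is the $\Gamma$-analogue of the composition identity, since invariants do not commute with colimits in general. I would argue directly on the triple tensor product $M(r) \otimes (N^{\boxtimes r})(n) \otimes V^{\otimes n}$: the $\SymGrp{r}$-action (on $M(r)$ and on the outer layer of $(N^{\boxtimes r})(n)$) and the $\SymGrp{n}$-action (on the arity of $(N^{\boxtimes r})(n)$ and diagonally on $V^{\otimes n}$) commute, making this a $\SymGrp{r} \times \SymGrp{n}$-module. The identity
\[M(r) \otimes^{\SymGrp{r}} \bigl((N^{\boxtimes r})(n) \otimes^{\SymGrp{n}} V^{\otimes n}\bigr) \cong \bigl(M(r) \otimes^{\SymGrp{r}} (N^{\boxtimes r})(n)\bigr) \otimes^{\SymGrp{n}} V^{\otimes n}\]
then follows from $X^{\SymGrp{r} \times \SymGrp{n}} = (X^{\SymGrp{r}})^{\SymGrp{n}} = (X^{\SymGrp{n}})^{\SymGrp{r}}$ and the fact that finite-group invariants commute with direct sums in $n$. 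Summing over $n$ and $r$ then yields $\Gamma(M, \Gamma(N, V)) \cong \Gamma(M \tilde{\Box} N, V)$. This controlled commutation of invariants with the operations of the composition product is the main obstacle I anticipate; once it is in place, the pentagon and triangle axioms reduce to essentially bookkeeping on the factor-wise isomorphisms.
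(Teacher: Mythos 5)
Your argument is correct, but it is worth noting that the paper does not actually prove this proposition: it defers entirely to the literature, citing Loday--Vallette for the statements about $S$ and Fresse's article for the $\Gamma$-analogues. What you have written is essentially a reconstruction of the standard argument from those references, and the ingredients you isolate are the right ones: the projection formula $(\Ind^{G}_{H}A)\otimes_{G}B\cong A\otimes_{H}B$ and its invariant-theoretic counterpart via $\Ind\cong \mathrm{coInd}$ for finite groups in part (1), and the interchange of the $\SymGrp{r}$- and $\SymGrp{n}$-(co)invariants on the triple tensor product in part (2). The one point you should make explicit is where the standing hypothesis that $\K$ is a field enters: the identity $M(r)\otimes^{\SymGrp{r}}\bigl((N^{\boxtimes r})(n)\otimes^{\SymGrp{n}}V^{\otimes n}\bigr)\cong\bigl(M(r)\otimes^{\SymGrp{r}}(N^{\boxtimes r})(n)\bigr)\otimes^{\SymGrp{n}}V^{\otimes n}$ is not just the equality $X^{\SymGrp{r}\times\SymGrp{n}}=(X^{\SymGrp{r}})^{\SymGrp{n}}=(X^{\SymGrp{n}})^{\SymGrp{r}}$; it also requires that tensoring with $M(r)$ (respectively with $V^{\otimes n}$) commute with taking invariants, i.e.\ preserve the relevant kernels. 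Over a field this is automatic by exactness of the tensor product, and the same issue arises already in part (1) when you identify $\bigotimes_{i}\bigl(X_{i}^{G_{i}}\bigr)$ with $\bigl(\bigotimes_{i}X_{i}\bigr)^{\prod G_{i}}$. This is precisely the point addressed by Remark \ref{rmk:herring}, where the paper explains that over a general (hereditary) ring one must restrict to projective components and projective $\K$-modules to retain this exactness; your proof would need the same caveat to extend beyond the field case. With that flatness point acknowledged, the remaining verifications (unit, associator, symmetry) are indeed the bookkeeping you describe.
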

\begin{proof}
These assertions are classical for $S$ (see for instance \cite[Ch. 5]{book:LodayVallette12}) and the analogue of these relations for $\Gamma$ is established in \cite{article:Fresse00}.
\end{proof}
\begin{rmk}\label{rmk:herring}
The statements of Proposition \ref{ReflectingProducts} remain valid without any change when we work with a commutative ground ring $\K$ in the case of the functor $S:M\mapsto S(M,-)$. 

For the functor $\Gamma(P,-)$ the statement of Proposition \ref{ReflectingProducts} is still valid if $\K$ is an hereditary ring, we restrict ourself to $\SymGrp{}$-modules $M$ whose components $M(r)$ are projective as $\K$-modules for all $r\in \mathbb{N}$, and we consider the restriction of our functor $\Gamma(M,-)$ to the category of projective $\K$-modules.

In short the tensor product $M(r)\otimes V^{\otimes r}$ form a projective $\K$-module as soon as $M(r)$ and $V$ do so. We just use the assumption that the ring $\K$ is hereditary to ensure that $M(r)\otimes^{\SymGrp{r}}V^{\otimes r} \subseteq M(r)\otimes V^{\otimes r}$ is still projective as a $\K$-module. We accordingly get that the map $\Gamma(M,-):V\mapsto \Gamma(M,V)$ defines an endofunctor of the category of projective $\K$-modules in this case. We then use that the tensor product with a projective module preserves kernels (and hence invariants) to check the validity of the claims of our proposition, after observing that the invariant composition of $\SymGrp{}$-modules also consists of projective $\K$-modules in this setting.
\end{rmk}
\subsection{Operads and $ P $-algebras}
We now recall the definition of an operad and the definition of the monads associated to an operad which we use in this chapter. To be specific, when we use the name operad, we mean symmetric operad, and we define this structure by using the coinvariant composition product recalled in the previous subsection.
\begin{dfn}
We define an operad to be a triple $ (P,\mu, \eta) $ where $ P $ is an $ \SymGrp{} $-module, $ 
 \mu:P\utilde{\Box} P\longrightarrow P, $ is a multiplication morphism, and $ \eta:I\longrightarrow P $ a unit morphism such that $P$ forms a monoid in $ (\ModSigma, \utilde{\Box}, I) $.
\end{dfn}

If $ P $ is an operad, then $ S(P,-) $ is a monad by Proposition \ref{ReflectingProducts}.

In what follows, we also use that the composition structure of an operad is determined by composition operations $\circ_{i}:P(m)\otimes P(n)\longrightarrow P(m+n-1)$  defined for any $ m,n\in \mathbb{N} $ and $1\leq i\leq m$, and which satisfies natural equivariance and associative relations. The unit morphism can then be given by a unit element $1 \in P(1)$ which satisfies natural unit relations with respect to these composition products. We refer to \cite{article:Fresse00} for instance for more details on this correspondence.

Since a general theory of free operads and their ideals can be set up (see \cite{book:Fresse09}) we can present operads by generating operations and relations.
\begin{dfn}
Let $ P $ be an operad, we define a $ P $-algebra to be an algebra over the monad $ S(P,-) $.
\end{dfn}

We have the following classical statement.
\begin{prp}\label{prp:OperadMonad}
Let $ V $ be a $\mathbb{K}$-module and $ (P,\mu,\eta) $ be an operad. The $\mathbb{K}$-module $ S(P,V) $ equipped with the morphisms induced by $ \mu $ and $ \eta $ is itself a $ P $-algebra.
\end{prp}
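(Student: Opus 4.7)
The plan is to deduce this statement as a purely formal consequence of Proposition \ref{ReflectingProducts} together with the universal construction of free algebras over a monad. Nothing genuinely new needs to be done: the strong monoidality of $S$ transports the monoid structure of the operad $P$ into a monad structure on $S(P,-)$, and the general theory then supplies the $P$-algebra structure on $S(P,V)$.

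More precisely, I would first invoke the second part of Proposition \ref{ReflectingProducts}: the functor
\[S:(\ModSigma,\utilde{\Box}, I)\longrightarrow (End(\Mod),\circ, Id)\]
is strongly monoidal. Since a monoid in $(\ModSigma,\utilde{\Box}, I)$ is precisely an operad, and a monoid in $(End(\Mod),\circ, Id)$ is precisely a monad, applying $S$ to the monoid $(P,\mu,\eta)$ yields a monad $(S(P,-), S(\mu), S(\eta))$ on $\Mod$. Explicitly, the monad multiplication is the composite
\[S(P,-)\circ S(P,-)\;\xrightarrow{\;\cong\;}\; S(P\utilde{\Box} P,-)\;\xrightarrow{\;S(\mu)\;}\; S(P,-),\]
where the first isomorphism is the structural comparison of the monoidal functor $S$, and similarly for the unit.

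Next, I would apply the general fact that for any monad $(T,\mu^T,\eta^T)$ on a category $\mathcal{C}$ and any object $V\in\mathcal{C}$, the object $TV$ carries a canonical $T$-algebra structure given by the component $\mu^T_V:T(TV)\longrightarrow TV$, this being the free $T$-algebra on $V$. Specialising to the monad $T=S(P,-)$ and the $\K$-module $V$, the structure morphism
\[S(\mu)_V:S(P,S(P,V))\longrightarrow S(P,V)\]
endows $S(P,V)$ with a $P$-algebra structure. The associativity square and the unit triangles for this structure are simply the evaluation at $V$ of the monad axioms for $S(P,-)$, which themselves come from the operad axioms for $(P,\mu,\eta)$ transported through the monoidal comparison of $S$.

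There is no genuine obstacle here: the content of the statement has already been absorbed into Proposition \ref{ReflectingProducts}, and what remains is the standard bookkeeping which identifies $S(P,V)$ with the free $S(P,-)$-algebra, hence, by the definition of a $P$-algebra recalled just above the proposition, with a $P$-algebra. The only point worth being slightly careful about is to make the identification of the composite endofunctor $S(P,-)\circ S(P,-)$ with $S(P\utilde{\Box} P,-)$ explicit when writing the structure morphism, so that the formula for $S(\mu)_V$ is unambiguous; this is exactly what the strong monoidality of $S$ provides.
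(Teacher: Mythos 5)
Your argument is correct and is exactly the standard one: the paper itself disposes of this statement by citing \cite[Sec. 5.2.5]{book:LodayVallette12}, and the proof given there is precisely the combination you describe, namely that the strong monoidality of $S$ (Proposition \ref{ReflectingProducts}) turns the operad $(P,\mu,\eta)$ into a monad $(S(P,-),S(\mu),S(\eta))$, after which $S(P,V)$ with structure map $S(\mu)_{V}$ is the free algebra over that monad. Nothing is missing; your remark about making the comparison isomorphism $S(P,-)\circ S(P,-)\cong S(P\utilde{\Box}P,-)$ explicit is the only point of care, and you handle it correctly.
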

\begin{proof}
See \cite[Sec. 5.2.5]{book:LodayVallette12}.
\end{proof}
\begin{rmk}
The statement of \ref{prp:OperadMonad} remains valid without any extra assumption on our objects nor change when we work over a general ring $\K$.
\end{rmk}
\subsection{$ \Gamma (P,-) $ and $ \Lambda(P,-) $ monads}
Under a connectivity condition any operad structure on an $ \SymGrp{} $-module $ P $ induces a monad structure on $ \Gamma (P,-) $. We define $ \Gamma P $-algebras as the algebras for the monad $ \Gamma (P,-) $. The trace map is a natural transformation of monads. The concept of $ \Gamma P $-algebra was introduced by B. Fresse in \cite{article:Fresse00}. We recall the definition of these concepts in this section.
\begin{dfn}
An $ \SymGrp{} $-module $ N $ is connected if $ N(0)=0 $.
\end{dfn}

We rely on the following observation:
\begin{prp}\label{prp:TrMNIso}
Let $ M $ and $ N $ be two $ \SymGrp{} $-modules. If $ N $ is connected, then we have an isomorphism $ Tr_{M,N}:M\utilde{\Box} N\longrightarrow M\tilde{\Box}N $. 
\end{prp}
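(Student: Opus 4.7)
The plan is to reduce the proposition to a standard fact about free $\mathbb{K}[\SymGrp{r}]$-modules, by showing that when $N$ is connected the $\SymGrp{r}$-module $N^{\boxtimes r}$ (under the slot-permutation action) is in fact $\mathbb{K}[\SymGrp{r}]$-free. The reduction rests on the elementary observation that if $W \cong \mathbb{K}[\SymGrp{r}] \otimes W_{0}$ is free (with trivial action on $W_{0}$), then both $W_{\SymGrp{r}}$ and $W^{\SymGrp{r}}$ identify canonically with $W_{0}$ and the trace $w_{0} \mapsto (\sum_{\sigma \in \SymGrp{r}} \sigma) \otimes w_{0}$ becomes the identity map under these identifications. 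Since freeness over $\mathbb{K}[\SymGrp{r}]$ is preserved by diagonal tensor products with any $\mathbb{K}[\SymGrp{r}]$-module (via the twist $g \otimes b \mapsto g \otimes g^{-1} b$, which converts a diagonal action into a one-sided action on the $\mathbb{K}[\SymGrp{r}]$ factor), freeness of $N^{\boxtimes r}$ would immediately imply freeness of $M(r) \otimes N^{\boxtimes r}$ and hence the isomorphism summand by summand.

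To establish freeness of $N^{\boxtimes r}$, I would decompose it along the $\SymGrp{r}$-orbits of ordered tuples $(m_{1}, \ldots, m_{r})$ with $m_{i} \geq 1$, which is the point at which connectedness enters. For each unordered multiset $\mu$, pick a representative tuple with stabilizer $H_{\mu} \leq \SymGrp{r}$; the associated orbit summand identifies with $\Ind_{H_{\mu}}^{\SymGrp{r}} Y_{\mu}$, where $Y_{\mu} = \Ind_{\SymGrp{m_{1}} \times \cdots \times \SymGrp{m_{r}}}^{\SymGrp{n}} N(m_{1}) \otimes \cdots \otimes N(m_{r})$. Since induction sends free modules to free modules, it suffices to show $Y_{\mu}$ is $\mathbb{K}[H_{\mu}]$-free. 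The $H_{\mu}$-action on $Y_{\mu}$ is the diagonal of a block-permutation action on the coset space $\SymGrp{n}/\prod_{i} \SymGrp{m_{i}}$ and a factor-permutation action on $N(m_{1}) \otimes \cdots \otimes N(m_{r})$. The coset action is free because for a nontrivial $\sigma \in H_{\mu}$ the induced block permutation $\sigma_{\mathrm{block}} \in \SymGrp{n}$ swaps at least two distinct blocks of positive size and therefore cannot lie in the Young subgroup $\prod_{i} \SymGrp{m_{i}}$, which preserves each block setwise. Strict positivity $m_{i} \geq 1$ is essential here, since an empty block would permit $\sigma_{\mathrm{block}}$ to act trivially on the nonempty blocks; this is precisely where the connectedness hypothesis is used. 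Freeness on the coset factor propagates to the full $Y_{\mu}$ by the same twist argument as in the reduction step.

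The main obstacle I anticipate is the bookkeeping of several intertwined actions: separating the slot permutation $\SymGrp{r}$ from the internal arity action $\SymGrp{n}$, and verifying that under the orbit decomposition each orbit summand genuinely takes the form $\Ind_{H_{\mu}}^{\SymGrp{r}} Y_{\mu}$ with the expected stabilizer action. Once these identifications are set up, both the freeness verification and the identity-after-identifications computation of the trace are routine, and naturality in $M$ and $N$ follows because every step is functorial.
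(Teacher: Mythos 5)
Your proof is correct; the paper itself does not supply an argument for this proposition (it only refers to Fresse's article), and your reduction to the $\mathbb{K}[\SymGrp{r}]$-freeness of $N^{\boxtimes r}$ via the orbit decomposition over tuples $(m_{1},\ldots,m_{r})$ with $m_{i}\geq 1$ is essentially the standard argument given in that reference. The one point worth keeping explicit, which you do handle, is that freeness of the $H_{\mu}$-action on the coset space $\SymGrp{n}/\prod_{i}\SymGrp{m_{i}}$ genuinely requires every block to be nonempty, i.e.\ the connectedness hypothesis $N(0)=0$.
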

\begin{proof}
See \cite{article:Fresse00}.
\end{proof}

This proposition has the following consequence:
\begin{prp}
Let $ (P,\mu,\eta) $ be a connected operad. There exists a product $ \tilde{\mu}:P\tilde{\Box}P\longrightarrow P $ given by:
\begin{equation*}
 \begin{split}
 P\tilde{\Box}P \overset{\cong}{\longleftarrow} P\utilde{\Box} P \overset{\mu} {\longrightarrow} P
 \end{split}
\end{equation*}
and making $ (P,\tilde{\mu},\eta) $ into a monoid in the monoidal category $ (\ModSigma, \tilde{\Box}, I) $.\hfill$\square$
\end{prp}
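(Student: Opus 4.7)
The plan is to transport the monoid structure of $P$ in $(\ModSigma,\utilde{\Box},I)$ to a monoid structure in $(\ModSigma,\tilde{\Box},I)$ along the trace isomorphism. First, since $P$ is connected, Proposition \ref{prp:TrMNIso} applies with $M=N=P$ and tells us that $Tr_{P,P}\colon P\utilde{\Box}P\to P\tilde{\Box}P$ is invertible, so the formula $\tilde{\mu}\eqdef \mu\circ Tr_{P,P}^{-1}$ is a well-defined morphism of $\SymGrp{}$-modules. The unit $\eta\colon I\to P$ serves in both settings because $\utilde{\Box}$ and $\tilde{\Box}$ share the unit object $I$.

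The next step is to verify associativity. A direct inspection of the arity-$0$ part of $M\utilde{\Box}N=\bigoplus_{r}M(r)\otimes_{\SymGrp{r}}N^{\boxtimes r}$ shows that the composition of two connected $\SymGrp{}$-modules is again connected; hence every iterated composition built from $P$ (such as $P\utilde{\Box}(P\utilde{\Box}P)$, $(P\utilde{\Box}P)\utilde{\Box}P$, and their mixed variants using $\tilde{\Box}$) is connected, and Proposition \ref{prp:TrMNIso} supplies a trace isomorphism between each $\utilde{\Box}$-composite and its $\tilde{\Box}$-analogue. Using naturality of $Tr$ in both variables together with its compatibility with the associativity constraints of the two composition products, I would assemble a cube whose front face is the associativity pentagon of $\mu$ (commutative by hypothesis) and whose back face is the desired associativity diagram for $\tilde{\mu}$. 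Because the four side faces commute, the back face commutes once the front face does. A parallel argument applied to the left and right unit triangles $I\utilde{\Box}P\to P$ and $P\utilde{\Box}I\to P$ yields the unit axioms for $\tilde{\mu}$, completing the proof that $(P,\tilde{\mu},\eta)$ is a monoid in $(\ModSigma,\tilde{\Box},I)$.

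The main obstacle is the last-mentioned compatibility: one must check that $Tr$ upgrades from a pointwise natural isomorphism to a \emph{monoidal} natural isomorphism between $\utilde{\Box}$ and $\tilde{\Box}$ on the subcategory of connected $\SymGrp{}$-modules, i.e., that $Tr$ intertwines the associators and unitors of the two composition products. This is precisely the combinatorial content that links the strongly monoidal properties of $S$ and $\Gamma$ recorded in Proposition \ref{ReflectingProducts} to the trace map; once it is in hand, the result becomes a formal instance of transport of structure along a monoidal isomorphism, and the detailed verification is the one carried out in \cite{article:Fresse00}.
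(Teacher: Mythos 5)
Your proposal is correct and follows the intended route: the paper itself offers no written proof (the statement is closed with a $\square$), treating the result as an immediate transport of the monoid structure along the trace isomorphism of Proposition \ref{prp:TrMNIso}, which is exactly what you carry out. Your additional observations --- that composites of connected $\SymGrp{}$-modules are connected, and that the real content is the compatibility of $Tr$ with the associators and unitors of the two composition products, established in \cite{article:Fresse00} --- correctly identify where the substance lies.
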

\begin{crl}
Let $ (P,\mu,\eta) $ be a connected operad; then $ (\Gamma (P,-), \tilde{\mu}, \eta) $ is a monad.\hfill$\square$
\end{crl}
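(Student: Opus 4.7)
The plan is to deduce the corollary from the preceding proposition together with the monoidality statement of Proposition~\ref{ReflectingProducts}(2). Recall that a monad on $\Mod$ is nothing but a monoid in the strict monoidal category $(End(\Mod),\circ,Id)$. The preceding proposition produces, out of any connected operad $(P,\mu,\eta)$, a monoid $(P,\tilde{\mu},\eta)$ in the monoidal category $(\ModSigma,\tilde{\Box},I)$. Since Proposition~\ref{ReflectingProducts}(2) asserts that $\Gamma:(\ModSigma,\tilde{\Box},I)\longrightarrow (End(\Mod),\circ,Id)$ is strongly monoidal, it carries monoids to monoids, so the image $(\Gamma(P,-),\Gamma(\tilde{\mu}),\Gamma(\eta))$ becomes a monad on $\Mod$.

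More concretely, I would expand the argument as follows. Using the isomorphism $\Gamma(P\tilde{\Box}P)\cong \Gamma(P,-)\circ \Gamma(P,-)$ and $\Gamma(I)\cong Id$ provided by strong monoidality, the image of $\tilde{\mu}$ gives a natural transformation $\Gamma(P,-)\circ \Gamma(P,-)\longrightarrow \Gamma(P,-)$, and the image of $\eta$ gives a natural transformation $Id\longrightarrow \Gamma(P,-)$. The associativity and unit axioms of the monad follow by applying the functor $\Gamma$ to the corresponding commutative diagrams witnessing the monoid axioms of $(P,\tilde{\mu},\eta)$; the coherence isomorphisms of the monoidal functor turn these diagrams into the required diagrams in $(End(\Mod),\circ,Id)$.

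The only real point to check is that the resulting multiplication matches the one induced by the original operad composition $\mu:P\utilde{\Box}P\longrightarrow P$ under the trace isomorphism $Tr_{P,P}:P\utilde{\Box}P\overset{\cong}{\longrightarrow} P\tilde{\Box}P$ of Proposition~\ref{prp:TrMNIso}; this is immediate from the definition of $\tilde{\mu}$ in the preceding proposition. There is no serious obstacle here: once the strong monoidality of $\Gamma$ with respect to $\tilde{\Box}$ is granted, the corollary is a formal consequence of the slogan \emph{monoidal functors send monoids to monoids}.
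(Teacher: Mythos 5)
Your proposal is correct and follows exactly the route the paper intends: the corollary is stated with an immediate $\square$ precisely because it is the formal consequence of the preceding proposition (which makes $(P,\tilde{\mu},\eta)$ a monoid in $(\ModSigma,\tilde{\Box},I)$) combined with the strong monoidality of $\Gamma$ from Proposition~\ref{ReflectingProducts}, via the principle that monoidal functors send monoids to monoids. Your additional remarks on the coherence isomorphisms and the compatibility with $Tr_{P,P}$ are accurate elaborations of this same argument.
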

\begin{dfn}
Let $ (P,\mu,\eta) $ be a connected operad. A $ \Gamma P $-algebra is an algebra over the monad $ \Gamma (P,-) $.
\end{dfn}

From now on we only consider connected operads.

\begin{prp}
Let $P$ a connected operad. The natural transformation $Tr:S(P,-)\rightarrow \Gamma(P,-) $ is a morphism of monads.
\end{prp}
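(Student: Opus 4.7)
The plan is to verify the two defining diagrams of a morphism of monads, namely compatibility with units and compatibility with multiplications, and to reduce both assertions to identities already available in the excerpt. The essential ingredients are the strong monoidality of $S$ and $\Gamma$ with respect to the composition products (Proposition~\ref{ReflectingProducts}), the isomorphism $Tr_{M,N}:M\utilde{\Box}N\to M\tilde{\Box}N$ for $N$ connected (Proposition~\ref{prp:TrMNIso}), and the very definition of the product $\tilde{\mu}$ as $\mu\circ Tr_{P,P}^{-1}$.

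For the unit, I would observe that both $\eta^{S}_{V}:V\to S(P,V)$ and $\eta^{\Gamma}_{V}:V\to\Gamma(P,V)$ are obtained by applying the strong monoidal functors $S$ and $\Gamma$ to the operadic unit $\eta:I\to P$, after identifying $S(I,V)=V=\Gamma(I,V)$. Since $I$ is concentrated in arity~$1$ and $\SymGrp{1}$ is trivial, the trace map $Tr_{I,V}:S(I,V)\to\Gamma(I,V)$ is the identity on $V$. By naturality of $Tr$ in its first slot applied to $\eta$, the triangle $Tr_{V}\circ\eta^{S}_{V}=\eta^{\Gamma}_{V}$ commutes.

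For the multiplication, I need to check the commutation of the square
\[
\begin{array}{ccc}
S(P,S(P,V)) & \xrightarrow{Tr^{2}_{V}} & \Gamma(P,\Gamma(P,V)) \\
\mu^{S}_{V}\big\downarrow & & \big\downarrow\mu^{\Gamma}_{V} \\
S(P,V) & \xrightarrow{Tr_{V}} & \Gamma(P,V),
\end{array}
\]
where $Tr^{2}_{V}$ denotes the two–fold trace, for instance as the composite $S(P,S(P,V))\xrightarrow{Tr_{S(P,V)}}\Gamma(P,S(P,V))\xrightarrow{\Gamma(P,Tr_{V})}\Gamma(P,\Gamma(P,V))$. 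Strong monoidality of $S$ and $\Gamma$ (Proposition~\ref{ReflectingProducts}) identifies $S(P,S(P,V))\cong S(P\utilde{\Box}P,V)$ and $\Gamma(P,\Gamma(P,V))\cong\Gamma(P\tilde{\Box}P,V)$, and $\mu^{S}_{V}$, $\mu^{\Gamma}_{V}$ are images of $\mu$ and $\tilde{\mu}$ under these functors. The key reduction is that under these identifications $Tr^{2}_{V}$ corresponds to the natural transformation induced at $V$ by the $\SymGrp{}$–module trace $Tr_{P,P}:P\utilde{\Box}P\to P\tilde{\Box}P$. Granted this, the multiplication square reduces by functoriality to the identity $\tilde{\mu}\circ Tr_{P,P}=\mu$, which is exactly the defining equation of $\tilde{\mu}$.

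The main obstacle is the identification of $Tr^{2}$ with a single trace on the composite $\SymGrp{}$–module. This is verified by a direct computation at the level of $\SymGrp{}$–modules: an element of $(P\utilde{\Box}P)(n)$ involves tensors on which the total symmetric group $\SymGrp{n}$ acts through the wreath-type subgroup $\SymGrp{r}\ltimes(\SymGrp{n_{1}}\times\cdots\times\SymGrp{n_{r}})$, and the norm sum over this subgroup factors as the outer norm on $\SymGrp{r}$ followed by the product of inner norms on the $\SymGrp{n_{i}}$, which is precisely the two–step description of $Tr^{2}_{V}$. Combined with the naturality of $Tr$ in both arguments, this factorization closes the argument.
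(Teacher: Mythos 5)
Your proposal is correct in substance, but note that the paper itself offers no argument for this proposition: it simply defers to Fresse's article \cite{article:Fresse00}, so you are reconstructing the proof rather than paralleling one. Your reconstruction follows the expected route and uses exactly the right ingredients: the unit triangle via $Tr_{I,V}=\id_{V}$ and naturality of $Tr$ in the $\SymGrp{}$-module slot, and the multiplication square via the monoidality identifications and the defining relation $\tilde{\mu}\circ Tr_{P,P}=\mu$. The one point where your write-up is looser than it should be is the statement that ``$Tr^{2}_{V}$ corresponds to the natural transformation induced at $V$ by $Tr_{P,P}$'': since $Tr_{P,P}$ goes from $P\utilde{\Box}P$ to $P\tilde{\Box}P$, applying $S(-,V)$ or $\Gamma(-,V)$ to it alone does not produce a map $S(P\utilde{\Box}P,V)\to\Gamma(P\tilde{\Box}P,V)$; the correct identification is $Tr^{2}_{V}=\Gamma(Tr_{P,P},V)\circ Tr_{P\utilde{\Box}P,V}$, and the multiplication square then closes by the chain $Tr_{V}\circ S(\mu,V)=\Gamma(\mu,V)\circ Tr_{P\utilde{\Box}P,V}=\Gamma(\tilde{\mu},V)\circ\Gamma(Tr_{P,P},V)\circ Tr_{P\utilde{\Box}P,V}$, the first equality being naturality of $Tr$ in the operadic argument. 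You do flag ``naturality of $Tr$ in both arguments'' at the end, so this is a presentational imprecision rather than a gap; the genuinely computational content, namely that the norm over the wreath-type group $\SymGrp{r}\ltimes(\SymGrp{n_{1}}\times\cdots\times\SymGrp{n_{r}})$ factors as an outer norm followed by inner norms, is correctly identified and is indeed the heart of the matter. Connectedness of $P$ enters only through the invertibility of $Tr_{P,P}$, which makes $\tilde{\mu}$ (and hence the monad $\Gamma(P,-)$) exist at all; it is worth saying this explicitly.
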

\begin{proof}
See \cite{article:Fresse00}.
\end{proof}

We introduce a third kind of algebras called $ \Lambda P $-algebras.
\begin{dfn}
We denote by $ \Lambda (P,-):\Mod\longrightarrow\Mod $ the functor defined by the epi-mono factorization of the trace map.
\end{dfn}

\begin{prp}
Let $P$ a connected operad. The functor $ \Lambda (P,-)$ forms a submonad of $ \Gamma(P,-) $ and the factorization 
\[S(P,-)\rightarrow \Lambda(P,-)\rightarrow \Gamma(P,-)\]
forms a monad morphism.
\end{prp}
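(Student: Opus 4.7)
The plan is to use the preceding result that $Tr : S(P,-)\to \Gamma(P,-)$ is a morphism of monads, and to transport the multiplication $\tilde{\mu}$ of $\Gamma(P,-)$ along the epi-mono factorization $Tr = i\circ p$ defining $\Lambda(P,-)$, where $p:S(P,-)\twoheadrightarrow \Lambda(P,-)$ and $i:\Lambda(P,-)\hookrightarrow\Gamma(P,-)$. This will give $\Lambda(P,-)$ a monad structure for which $p$ and $i$ are both monad morphisms, proving the two assertions of the proposition simultaneously.

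The key technical point is that the horizontal composition $p\ast p:S(P,S(P,-))\to \Lambda(P,\Lambda(P,-))$ is levelwise surjective. One decomposes $(p\ast p)_V$ as the composite $p_{\Lambda(P,V)}\circ S(P,p_V)$; the outer map is surjective by the very definition of $\Lambda$, and the inner map is surjective because the Schur functor $S(P,-)=\bigoplus_n P(n)\otimes_{\SymGrp{n}}(-)^{\otimes n}$ is right exact, being a colimit of tensor products. (In the hereditary-ring setting of Remark \ref{rmk:herring}, one restricts to projective modules, where the same argument applies.)

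With this in hand, the monad-morphism identity for $Tr$ rewrites as
\[ i\circ p\circ \mu \;=\; \tilde{\mu}\circ (i\ast i)\circ (p\ast p). \]
Since $p\ast p$ is an epimorphism, the image of $\tilde{\mu}\circ (i\ast i):\Lambda(P,\Lambda(P,-))\to \Gamma(P,-)$ coincides with the image of its precomposition with $p\ast p$, and hence lies inside $i(\Lambda(P,-))$. The injectivity of $i$ then yields a unique natural transformation $\tilde{\mu}_\Lambda$ satisfying $i\circ \tilde{\mu}_\Lambda=\tilde{\mu}\circ (i\ast i)$; cancelling $i$ in the displayed equation further gives $\tilde{\mu}_\Lambda\circ (p\ast p)=p\circ \mu$. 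A unit for $\Lambda(P,-)$ is obtained by setting $\eta_\Lambda\eqdef p\circ \eta_S$, which satisfies $i\circ \eta_\Lambda=Tr\circ \eta_S=\eta_\Gamma$.

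Finally, the monad axioms for $(\Lambda(P,-),\tilde{\mu}_\Lambda,\eta_\Lambda)$ are verified by post-composing the required identities with the monomorphism $i$ and reducing them, via $i\circ \tilde{\mu}_\Lambda=\tilde{\mu}\circ (i\ast i)$ and $i\circ \eta_\Lambda=\eta_\Gamma$, to the already-known axioms for $(\Gamma(P,-),\tilde{\mu},\eta_\Gamma)$. The two monad-morphism squares for $p$ and for $i$ are then built into the very construction. The main obstacle is thus the first step, the surjectivity of $p\ast p$, which relies on the right exactness of the Schur functor and, over a general ring, on the projectivity hypotheses of Remark \ref{rmk:herring}.
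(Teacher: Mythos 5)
Your proof is correct and follows essentially the same route as the paper: both arguments rest on the fact that $Tr$ is a morphism of monads together with the surjectivity of $S(P,S(P,-))\rightarrow \Lambda(P,\Lambda(P,-))$, obtained because $S(P,-)$ preserves epimorphisms, and then factor the composition product of $\Gamma(P,-)$ through $\Lambda(P,-)$. You simply spell out in more detail the induced multiplication $\tilde{\mu}_{\Lambda}$, the unit, and the verification of the monad axioms by post-composition with the monomorphism, which the paper leaves implicit.
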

\begin{proof}
We use that $Tr$ is a morphism of monads and that the functor $S(P,-)$ preserves the epimorphisms to obtain that we have a commutative diagram of the form:
\[
\xymatrix{
S(P,S(P,-))\ar@/^15pt/[rr]^{Tr \circ S(P, Tr)= \Gamma(P, Tr)\circ Tr} \ar@ {->>}[r] \ar[d] & \Lambda(P,\Lambda(P,-)) \ar@{-->}[d]^{\exists}\ar[r] & \Gamma(P,\Gamma(P,-))\ar[d]\\
S(P,-)\ar@/_15pt/[rr] _{Tr} \ar@ {->>}[r] & \Lambda(P,-)\ar@{^{(}->}[r] & \Gamma(P,-).}
\]
We deduce from this diagram that the composition product of the monad $\Gamma(P,-)$ and factor through $\Lambda(P,-)$. The unit of $\Gamma(P,-)$ similarly factors through $\Lambda(P,-)$. The conclusion of the proposition follows.
\end{proof}

\begin{dfn}
Let $ P $ be a connected operad, a $ \Lambda P $-algebra is an algebra for the monad $ \Lambda(P,-) $. 
\end{dfn}
\begin{rmk}
Any $ \Lambda P $-algebra $V$ is a $ P $-algebra. Any $ \Gamma P$-algebra $W$ is a $ \Lambda P $-algebra by the following commutative diagram:
\[
\xymatrix{
\Lambda (P,V)\ar[r]\ar[d] & V\\
\Gamma (P,V).\ar[ur]
}
\]
\end{rmk}

\begin{rmk}
The statements of this subsection have a generalization when we work over a hereditary ring. We then assume that the components of our operads $P(r)$ form projective $\K$-modules, for all $r\in \mathbb{N}$, and we use that the map $\Gamma(P,-):V\mapsto \Gamma(P,V)$ defines an endofunctor of the category of projective $\K$-modules, according to the observation of Remark \ref{rmk:herring}. We get that this functor $\Gamma(P,-)$ forms a monad in this case, and that $\Lambda(P,-)$ is a submonad of this monad over the category of projective $\K$-modules.

We can actually forget the assumption that $\K$ is hereditary in the case of the $PreLie$ operad which we study in the following section. We will actually see that $\Gamma(PreLie,-):V\mapsto \Gamma(PreLie,V)$ induces an endofunctor of the category of free $\K$-module without any further assumption on the ground ring $\K$.
\end{rmk}
\subsection{Non-symmetric operads and $ TP $-algebras}\label{subsec:NonSymOp}
We mostly use symmetric operads in this chapter. But we also consider a monad $ T(P,-) $ which is naturally associated to any non-symmetric operad. We explain this auxiliary construction in this subsection.
\begin{ntt}
We denote by $ \ModN $ the category of $\mathbb{K}$-modules graded on $ \mathbb{N} $.
\end{ntt}
\begin{dfn}
Let $ A $ be in $ \ModN $. There is a functor $ T(A,-):\Mod\longrightarrow\Mod $ defined as follows:
\begin{equation*}
\begin{split}
T(A,V)= \bigoplus_{n}A(n)\otimes V^{\otimes n}.
\end{split}
\end{equation*}
Forgetting the action of the symmetric groups we get a functor $U:\ModSigma\longrightarrow \ModN$. Composing $U$ with $T(-,-)$ we have a functor $ T:\ModSigma\longrightarrow End(\Mod) $.
\end{dfn}
\begin{dfn}
Let $ M $, $ N $ be two graded modules. We define the graded module $ M\Box N $ by:
\begin{equation*}
 \begin{split}
 M\Box N (n)= \bigoplus_{r}M(r)\otimes (\bigoplus_{n_{1}+\ldots n_{r}=n}N(n_{1})\otimes\ldots\otimes N(n_{r})),
 \end{split}
\end{equation*}
This operation gives a monoidal structure on $ \ModN $.
\end{dfn}

We have the following proposition:
\begin{prp}
The functor $ T:(\ModSigma,\Box, I)\longrightarrow (End(\Mod),\circ,Id) $ is strongly monoidal.
\end{prp}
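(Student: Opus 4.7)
The plan is to exhibit the two structure isomorphisms required of a strongly monoidal functor, namely $T(I,-)\cong Id$ and $T(M\Box N,-)\cong T(M,-)\circ T(N,-)$, and then to verify that these data satisfy the pentagon and triangle coherence axioms. (Here we interpret $T$ as a functor out of $\ModN$ via the forgetful $U$; both the source monoidal structure $\Box$ and the target monoidal structure $\circ$ are inherited from $(\Mod,\otimes,\K)$.)

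First I would treat the unit. Since $I(n)=\K$ for $n=1$ and is zero otherwise, unpacking the definition of $T$ gives
\[ T(I,V) = \bigoplus_{n} I(n)\otimes V^{\otimes n} = \K\otimes V \cong V, \]
naturally in $V$, which yields the required natural isomorphism of endofunctors $T(I,-)\cong Id$.

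Next, for the composition isomorphism I would compute $T(M,T(N,V))$ directly. Expanding the outer $T$ and distributing the tensor product through the direct sum decomposition of $T(N,V)$ (using that $-\otimes-$ commutes with direct sums in each variable) yields
\[ T(M,T(N,V)) = \bigoplus_{r} M(r)\otimes\Bigl(\bigoplus_{n} N(n)\otimes V^{\otimes n}\Bigr)^{\otimes r} = \bigoplus_{r}\bigoplus_{n_{1},\ldots,n_{r}} M(r)\otimes\bigotimes_{i=1}^{r}\bigl(N(n_{i})\otimes V^{\otimes n_{i}}\bigr). \]
Then the symmetry of $\otimes$ on $\K$-modules lets me shuffle each summand so that all the $N$-factors sit to the left of the $V$-factors, producing
\[ \bigoplus_{n}\bigoplus_{r}\bigoplus_{n_{1}+\cdots+n_{r}=n} M(r)\otimes N(n_{1})\otimes\cdots\otimes N(n_{r})\otimes V^{\otimes n}, \]
which is precisely $T(M\Box N, V)$ by the very definition of $\Box$. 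Naturality in $M$, $N$, and $V$ is immediate from the functoriality of each constructor.

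The final step is to verify the coherence axioms. The associator of $(\ModN,\Box)$ and the associator of functor composition both descend from the associator of $(\Mod,\otimes,\K)$, and similarly for the unitors; the shuffle isomorphism produced above is also built entirely from instances of the symmetry and associativity of $\otimes$. Consequently, tracing a generic summand through the pentagon and triangle reduces to a diagram chase in $(\Mod,\otimes,\K)$, where commutativity is automatic from the fact that $\otimes$ is symmetric monoidal. The only real obstacle is bookkeeping: one has to pin down the convention for the shuffle that moves the factors $V^{\otimes n_{i}}$ past the subsequent $N(n_{j})$'s, and then check that with this convention the coherence diagrams commute on indexed summands. Once the convention is fixed as above, the check is mechanical and the strong monoidality of $T$ follows.
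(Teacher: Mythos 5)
Your proof is correct and is exactly the argument the paper has in mind: the paper's own proof is a one-line remark that one "easily adapts" the standard computation for the Schur functor $S$, and what you have written out — distributing $\otimes$ over $\bigoplus$, shuffling the $N$-factors past the $V$-factors, and reducing coherence to Mac Lane coherence in $(\Mod,\otimes,\K)$ — is precisely that adaptation (made easier here by the absence of coinvariants). No gaps; you have simply supplied the details the paper omits.
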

\begin{proof}
We easily adapt the proof of the counterpart of this statement for $S$ and $\Gamma$.
\end{proof}
\begin{dfn}
Let $ P $ a non-symmetric operad, a $ TP $-algebra is an algebra over the monad $ T(P,-) $.
\end{dfn}

Let $ P $ be an operad and $ V $ a $\mathbb{K}$-module; then $ T(P,V) $ is a $ TP $-algebra with a structure map given by the map $ \mu $ on $ P $ and juxtaposition of words formed by elements of $ V $.
\begin{dfn}
There is a natural transformation given by the quotient 
\[ pr:T(P,-)\longrightarrow S(P,-).\]
\end{dfn}
\begin{prp} \label{RmkMonMor}
Let $P$ be a connected operad. The two natural transformations $ in $ and $ pr $ are monad morphisms.
\end{prp}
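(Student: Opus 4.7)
The plan is to verify the two axioms of a monad morphism—compatibility with units and with multiplications—directly, using the fact that all three monads arise from the same underlying operad $P$ via the strong monoidal functors $T$, $S$, and $\Gamma$ (Proposition \ref{ReflectingProducts} and its analogue for $T$). The map $pr$ is, componentwise, the canonical projection $P(n)\otimes V^{\otimes n}\twoheadrightarrow P(n)\otimes_{\SymGrp{n}}V^{\otimes n}$, and $in$ is the inclusion of invariants $P(n)\otimes^{\SymGrp{n}}V^{\otimes n}\hookrightarrow P(n)\otimes V^{\otimes n}$.

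Compatibility with the units is immediate: each of the three monad units $V\to T(P,V),S(P,V),\Gamma(P,V)$ factors through $P(1)\otimes V$ via the operad unit $1\in P(1)$. Since $\SymGrp{1}$ is trivial, invariants, coinvariants and the plain tensor agree on this arity-$1$ piece, so $in$ and $pr$ act as the identity on the image of the unit. For the compatibility of $pr$ with the multiplication, I would pick a typical element $p\otimes t_{1}\otimes\cdots\otimes t_{r}\in T(P,T(P,V))$ with $p\in P(r)$ and $t_{i}=q_{i}\otimes v_{i}\in P(n_{i})\otimes V^{\otimes n_{i}}$, and check that both composites in the square
\[
\xymatrix{
T(P,T(P,V))\ar[r]^{pr_{S(P,V)}\circ T(P,pr_{V})}\ar[d]_{\mu^{T}_{V}} & S(P,S(P,V))\ar[d]^{\mu^{S}_{V}}\\
T(P,V)\ar[r]^{pr_{V}} & S(P,V)
}
\]
send it to the coinvariant class of $p\circ(q_{1},\ldots,q_{r})\otimes v_{1}\otimes\cdots\otimes v_{r}$ in $S(P,V)$. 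The point is that the non-symmetric operad structure on $U(P)$ underlying $T(P,-)$ is, by construction, the composite $U(P)\Box U(P)\twoheadrightarrow U(P\utilde{\Box}P)\xrightarrow{U(\mu)}U(P)$, and this quotient is precisely absorbed by applying $pr_{V}$ afterwards.

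For $in$, an element $x\in\Gamma(P,\Gamma(P,V))$ is invariant under all iterated $\SymGrp{}$-actions, and its image under the double inclusion inside $T(P,T(P,V))$ is mapped by $\mu^{T}$ to an element that is $\SymGrp{\sum n_{i}}$-invariant, by the equivariance of the operadic composition on $P$. Hence the image lies in $\Gamma(P,V)\subset T(P,V)$, and we may compare the restricted multiplication with $\tilde{\mu}$; they coincide because both are the image of $\mu:P\utilde{\Box}P\to P$ under the isomorphism $P\utilde{\Box}P\cong P\tilde{\Box}P$ of Proposition \ref{prp:TrMNIso} (which requires the connectedness hypothesis), and then under the strong monoidal functor $\Gamma$. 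The main obstacle is a bookkeeping issue—keeping track of the interplay between the three composition products $\utilde{\Box},\tilde{\Box},\Box$ and their images under $S,\Gamma,T$—but once the diagrams are set up, both compatibilities reduce to the observation that all three monad structures are induced by the single operad composition $\mu$ on $P$.
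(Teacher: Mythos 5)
Your proof is correct and follows essentially the same route as the paper: both reduce the compatibility squares to the level of $\SymGrp{}$-modules via the strong monoidal structures of $T$, $S$ and $\Gamma$, so that all the monad multiplications are seen to be induced by the single operadic composition $\mu$ (with the comparison $P\utilde{\Box}P\cong P\tilde{\Box}P$ of Proposition \ref{prp:TrMNIso} handling the invariant side). You are in fact somewhat more explicit than the paper, whose displayed diagram only treats $pr$ and leaves the unit compatibility and the case of $in$ implicit.
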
 
\begin{proof}
Let $ V $ be a $\mathbb{K}$-module. This statement follows from the commutativity of the following diagrams:
\[\xymatrix{
T(P,T(P,V))\ar[r]\ar[d]_{\cong} & S(P,S(P,V))\ar[d]^{\cong}\\
T(P\Box P,V)\ar[d] & S(P\utilde{\Box} P,V)\\
T(P\utilde{\Box}P,V)\ar[ur]_{pr}.\\
}
\]
The verification of this commutative property is immediate.
\end{proof}

\begin{rmk}
The results of this subsection remain valid without change when we work over a commutative ring $\K$.
\end{rmk}

\section{On $ PreLie $ and rooted trees operads}
We recall the definition of $ PreLie $-algebras. These algebras have a binary product and a relation, sometimes called right associativity.

The $ PreLie $-algebras were introduced in \cite{article:Gerstenhaber63} by Gerstenhaber. We refer to \cite{article:ChapotonLivernet01} for the definition of the operad which governs this category of algebras. We also refer to \cite{incollection:Machon} for a survey on the theory and to \cite{article:Dokas13} for some applications of PreLie-algebras in positive characteristic.
\begin{dfn}\label{DfnPrelie}
A $\mathbb{K}$-module $ V $ is a $ PreLie $-algebra if it is endowed with a bilinear product:
\begin{equation*}
 \begin{split}
 \{-,-\}:V\otimes V\longrightarrow V,
 \end{split}
\end{equation*}
such that
\begin{equation*}\label{prelierel}
 \begin{split}
 \lbrace\lbrace x,y\rbrace ,z\rbrace -\lbrace x,\lbrace y,z\rbrace\rbrace = \lbrace\lbrace x,z\rbrace,y\rbrace -\lbrace\lbrace x,y\rbrace ,z\rbrace .
 \end{split}
\end{equation*}
\end{dfn}
The $ PreLie $ bracket defines a $ Lie  $ bracket by: $ [a,b]=\lbrace a,b\rbrace - \lbrace b,a\rbrace $ .

This structure appears naturally in different contexts. We recall some examples which we revisit in the context of $ \Gamma PreLie $-algebras.
\begin{xmp}\label{XmpPreLieAlg}
\begin{enumerate}
\item  Let $ P $ be an operad; we can define a $ PreLie $-algebra structure on the following $\mathbb{K}$-module $ \bigoplus_{n}P(n) $. Explicitly the $PreLie$-product is given by the following formula:
\[\lbrace p,q\rbrace=\sum_{i\in \lbrace 1,\ldots,n\rbrace}p\circ_{i}q\]
where $p\in P(n)$ and $q\in P(m)$. We go back to this example in Section \ref{Applications} where we study the relation between $PreLie$-systems and $\Gamma PreLie$-algebras.
\item The Hochschild complex of an associative algebra $ A $ defined as $ C^{r}(A,A) = \Hom{}{A^{\otimes r}}{A} $ has a dg-$ PreLie $-algebra structure. For $ f \in C^{m}(A,A) $ and $ g \in C^{n}(A,A) $, we explicitly have:
\begin{multline*}
 \lbrace f, g\rbrace (x_{1},\ldots,x_{n+m-1})=\\
 \sum_{i=1}^{m}(-1)^{(n-1)(i-1)}f(x_{1},\ldots,x_{i-1},g(x_{i},\ldots,x_{n+i-1}),x_{n-i}.\ldots,x_{n+m-1}),
\end{multline*}
This structure was introduced by Gerstenhaber in \cite{article:Gerstenhaber63} and can actually be defined on the deformation complex of any algebra over an operad (see \cite[Ch. 12]{book:LodayVallette12}). This $PreLie$-algebra structure on the Hochschild complex of an algebra is also a special case of the previous example, where we take $P=\Lambda End_{A}$, the operadic suspension $\Lambda$ of the endomorphism operad $End_{A}$ of $A$.
\end{enumerate}
\end{xmp}

We have a new type of $ PreLie $-algebras, called $ p $-restricted $ PreLie $-algebras, which occur when the ground ring is a field of characteristic $ p>0 $. As for $ p $-restricted $ Lie $-algebras, introduced by N. Jacobson in \cite{book:Jacobson79}, $ p $-restricted $ PreLie $-algebras appear naturally in the study of $ PreLie $ structures in positive characteristic $ p $. This kind of algebras was introduced by A. Dzhumadil'daev in \cite{article:Dzhumadil01}.
\begin{dfn}\label{dfn:pRest}
Fixed a field $ \mathbb{K} $ of characteristic $ p $. Let $ (L, \lbrace,\rbrace) $ be a $ PreLie $-algebra. It is a $ p $-restricted $ PreLie $, or $ p-PreLie $-algebra if the following equation holds:
\begin{equation*}\label{pRest}
\lbrace\lbrace\ldots\lbrace\lbrace x,\underbrace{ y \rbrace ,y\rbrace\ldots\rbrace y}_{p}\rbrace = 
\lbrace x,\lbrace\ldots\lbrace\lbrace\underbrace{ y  ,y\rbrace\ldots\rbrace y}_{p}\rbrace\rbrace.
\end{equation*}
\end{dfn}

\begin{rmk}
In \cite{article:Dokas13} I. Dokas introduces a more general notion of $p$-restricted $PreLie$-algebra. A ``generalized'' $p$-restricted $PreLie$-algebra is a $PreLie$-algebra $V$ endowed with a Frobenius map $\phi: V\rightarrow V$ satisfying some relations. If we assume  $\phi= \{\{\cdots\{\underbrace{y,y\},\cdots\}, y}_{p}\}$ we retrieve the definition of A. Dzhumadil'daev (Definition \ref{dfn:pRest}).
\end{rmk}

\begin{xmp}
\textbf{Simple Lie algebra $sl(2,\mathbb{K})$}. In characteristic $ 0 $ a semisimple Lie algebra does not admit a $ PreLie $ structure. But this is no longer the case in positive characteristic. In \cite{article:Dokas13} it is shown that $ sl(2,\mathbb{K}) $ admits a $ PreLie $ structure if and only if $ char(\mathbb{K})=3 $. In this case the $ PreLie $ structure is $ 3 $-restricted. For details and proof see \cite{article:Dokas13}.

\textbf{Rota-Baxter algebras}. In \cite{article:Dokas13} it is shown that the Rota-Baxter algebras, introduced by Gian-Carlo Rota in \cite{incollection:Rota95}, admit a $ p $-restricted $ PreLie $ structure.
\end{xmp}

$ PreLie $-algebras in the sense of \ref{DfnPrelie} are identified with a category of algebras over an operad defined by generators and relations. We recall another description of this operad in terms of trees.
\subsection{Non labelled trees}
In this section we introduce the definition of non labelled tree.
\begin{dfn}
We use the name non labelled tree to refer to a non-empty, finite, connected graph, without loops, with one special vertex called the root. The edges of such a tree admit a canonical orientation with the root as ultimate outgoing vertex, we have a pre-order corresponding to this orientation on the set of vertices of the tree, with the root as least element. Two non labelled trees are isomorphic if they are isomorphic as graphs by an isomorphism which preserves the root.

If necessary, we speak about a non labelled $ n $-tree to specify the number $n$ of vertices.
\end{dfn}
\begin{dfn}
Let $ \tau $ be a non labelled tree, a sub-tree is a connected sub-graph with root its minimum vertex by the pre-order defined by $ \tau $.
\end{dfn}
\begin{dfn}
Let $ \Tree $ be a non labelled rooted tree, a branch $ B $ of $ \Tree $ is a maximal subtree of $ \Tree $ that does not contain the root, where maximal has to be understood as a maximal element in the poset, defined by inclusion, of non labelled sub-trees of $ \Tree $.
\end{dfn}
\begin{dfn}
Let $ \Tree $ be a non labelled tree and $ B $ be a branch of $ \Tree $, the set $ iso(B) $ is the set of all branches of $ \Tree $ isomorphic, as non labelled trees, to $ B $.
\end{dfn}
\subsection{Labelled trees}
We define the concept of labelled tree.
\begin{dfn}
We call labelled tree, or just tree, a non labelled tree with a fixed bijection, called labelling, between its vertices and the set $ \lbrace 1,\ldots,n \rbrace $, where $ n $ is the number of vertices. We denote by $ \mathcal{RT}(n) $ the set of labelled trees with $ n $ vertices. The group $ \SymGrp{n} $ acts on this set by permuting the labelling.

If necessary, we use the expression of $ n $-tree to specify the number of vertices of a tree.
\end{dfn}
\begin{xmp}
The following is a $ 3 $-tree:
\begin{equation*}
 \FlowerTwo{3}{2}{1},
\end{equation*}
with root the vertex labelled by $ 3 $.

Notice that our trees are not planar. For example, we have:
\begin{equation*}
\FlowerTwo{3}{2}{1}=\FlowerTwo{3}{1}{2}.
\end{equation*}
\end{xmp}
\begin{dfn}
The $ \SymGrp{} $-module $ RT $ of rooted trees is
\begin{equation*}
 RT(n)\eqdef \mathbb{K} [\mathcal{RT}(n)],
\end{equation*}
where $ \mathbb{K}[X] $ is the $\mathbb{K}$-module freely generated by the base set $ X $.
\end{dfn}
\begin{xmp}
Let $ \sigma $ be the permutation of $ \SymGrp{3} $ that permutes $ 1 $ with $ 2 $ and fixes $ 3 $:
\begin{equation*}
 \sigma^{\ast}\FlowerTwo{1}{2}{3}=\FlowerTwo{2}{1}{3}.
\end{equation*}
\end{xmp}
\subsection{The rooted trees operad}
The $ \SymGrp{} $-module $ RT $ can be endowed with a structure of operad. This new operad is isomorphic to $ PreLie $. We review this result in this section.
The proof of the isomorphism is given in \cite{article:ChapotonLivernet01}. 
\begin{dfn}
We define the following partial compositions:
\begin{equation*}
 - \circ_{i} -:RT(m)\times RT(n)\longrightarrow RT(n+m-1),
\end{equation*}
with $ 1 \leq i\leq m $ as follows, let $In(\Tree,i)$ be the set of incoming edges of the vertex of $\Tree$ labelled $i$ :
\begin{equation*}
 \Tree \circ_{i} \upsilon \eqdef \sum_{f:In(\Tree,i)\longrightarrow \lbrace 1,\ldots, n \rbrace} \Tree\circ_{i}^{f} \upsilon ,
\end{equation*}
where $ \Tree\circ_{i}^{f} \upsilon $ is the $ n+m-1 $-tree obtained by substituting the tree $ \upsilon $ to the $i$th vertex of the tree $\tau$, by attaching the outgoing edge of this vertex in $\tau$, if it exists, to the root of $\upsilon$, and the ingoing edges to vertices of $\upsilon$ following the attaching map $f$ and then labelling following the labelling of $\Tree$ and the labelling of $\upsilon$ after obvious the shift. The sum runs over all these attachment maps $f:In(\tau, i)\longrightarrow \{1,\ldots n \}$.
\end{dfn}
\begin{xmp}
\begin{equation*}
 \Germ\circ_{1} \FlowerTwo{1}{2}{3}=\LFlower{1}{2}{3}{4}+\RFlower{1}{2}{3}{4}+\CFlower{1}{2}{3}{4}.
\end{equation*}
\end{xmp}
\begin{lmm}
These partial compositions define a total composition $ \gamma:RT\circ RT\longrightarrow RT $ that is an operad structure on the $ \SymGrp{} $-module $ RT $.
\end{lmm}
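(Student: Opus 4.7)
The plan is to verify the three families of axioms that characterise a symmetric operad structure in terms of partial compositions: equivariance with respect to the $\SymGrp{}$-action, the two associativity relations (nested and parallel), and the unit axiom with respect to the tree consisting of a single vertex.

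First I would pick as unit the one-vertex tree $\Root \in RT(1)$ and check that for any $\Tree \in RT(m)$ we have $\Tree \circ_i \Root = \Tree$ and $\Root \circ_1 \Tree = \Tree$. Both equalities are immediate from the definition: in the first case the $i$th vertex of $\Tree$ is replaced by a single new vertex and the set $In(\Tree,i)$ is re-attached to this vertex via the unique attaching map, so no genuine sum appears; in the second case there is nothing to re-attach because the root has no outgoing edge in $\Root$. Next I would verify equivariance. Since the symmetric group only acts on the labels, and the attachment maps $f:In(\Tree,i)\to\{1,\ldots,n\}$ depend only on the underlying unlabelled shape, both sides of the equivariance formula
\[(\sigma\cdot\Tree)\circ_i(\eta\cdot\upsilon)=(\sigma\circ_i\eta)\cdot(\Tree\circ_{\sigma^{-1}(i)}\upsilon)\]
run over the same set of attachments and only differ by a global relabelling that matches $\sigma\circ_i\eta$ by bookkeeping.

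The heart of the argument is the verification of the two associativity relations. For the parallel case, given trees $\Tree \in RT(\ell)$, $\upsilon\in RT(m)$, $\omega\in RT(n)$ and two distinct vertices $i\neq j$ in $\Tree$, one must show
\[(\Tree\circ_i \upsilon)\circ_j\omega=(\Tree\circ_j\omega)\circ_i\upsilon\]
after the standard index shift. Both sides expand as double sums over pairs of attachment maps $(f,g)$ where $f:In(\Tree,i)\to\{1,\ldots,m\}$ and $g:In(\Tree,j)\to\{1,\ldots,n\}$; the two operations are performed on disjoint parts of $\Tree$, so the resulting $(\ell+m+n-2)$-trees coincide term by term. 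For the nested case, with $1\le i\le \ell$ and $1\le j\le m$, one must show
\[(\Tree\circ_i\upsilon)\circ_{i-1+j}\omega=\Tree\circ_i(\upsilon\circ_j\omega).\]
Here the bookkeeping is more delicate: on the left one first grafts $\upsilon$ into the $i$th vertex of $\Tree$ along some $f:In(\Tree,i)\to\{1,\ldots,m\}$ and then grafts $\omega$ along some $g:In(\Tree\circ_i^f\upsilon,i-1+j)\to\{1,\ldots,n\}$; on the right one first grafts $\omega$ into the $j$th vertex of $\upsilon$ along some $g':In(\upsilon,j)\to\{1,\ldots,n\}$ and then grafts the result into $\Tree$ along some $f':In(\Tree,i)\to\{1,\ldots,m+n-1\}$. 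The key step will be to exhibit a bijection between the indexing sets of the two sums that preserves the resulting tree. Concretely, $g'$ is simply $g$ restricted to the incoming edges of $\upsilon$ at its vertex $j$, while $f'$ is obtained from $f$ together with the restriction of $g$ to $In(\Tree,i)$ by recording on each edge whether it is attached to the $\upsilon$-part or to the $\omega$-part of $\upsilon\circ_j^{g'}\omega$. A careful case distinction on whether $i=j$ would not arise here since the nested relation is exactly the statement that grafting inside a vertex commutes with itself in the expected way.

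The main obstacle I expect is precisely the nested associativity: one has to show that the bijection of attachment maps sketched above is well-defined and that the resulting trees — their underlying graph, their root, and their labelling after the obvious shifts — agree on both sides. I would organise this verification by describing a tree $(\Tree\circ_i\upsilon)\circ_{i-1+j}\omega = \Tree\circ_i(\upsilon\circ_j\omega)$ via its vertex set, which in both cases is the disjoint union of the vertex sets of $\Tree\setminus\{i\}$, $\upsilon\setminus\{j\}$, and $\omega$, and then checking that the edge set, obtained from those of the original three trees together with the new edges determined by the attachment data, is the same. Once equivariance, parallel and nested associativity, and the unit axioms are established, the partial compositions assemble into a total composition $\gamma:RT\circ RT\to RT$ in the standard way (see \cite{book:LodayVallette12} or \cite{book:Fresse09}), which concludes the proof.
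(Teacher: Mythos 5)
Your verification plan is correct in outline, but note that the paper itself offers no proof of this lemma at all: it is stated without a proof environment, and the surrounding results are deferred to Chapoton--Livernet \cite{article:ChapotonLivernet01}. So you are doing genuinely more work than the source, and the work you propose is the right work. The essential combinatorial content is exactly where you locate it: because $\Tree\circ_{i}\upsilon$ is a \emph{sum} over attaching maps $f:In(\Tree,i)\to\{1,\ldots,n\}$, each operad axiom becomes a statement that two indexing sets of attachment data are in bijection compatibly with the resulting trees, and your description of the bijection for the nested associativity (split $g$ into its restriction to $In(\upsilon,j)$, giving $g'$, and merge the remaining values of $g$ with $f$ to produce $f'$) is the correct one. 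Two small points deserve more care than your sketch gives them. First, in the parallel associativity the two graftings are not literally performed on ``disjoint parts'' of $\Tree$ when the vertices $i$ and $j$ are adjacent: if $i$ is a child of $j$, then after grafting $\upsilon$ at $i$ the set $In(\Tree\circ_{i}^{f}\upsilon,j)$ differs from $In(\Tree,j)$ by replacing the edge out of $i$ with the edge out of the root of $\upsilon$; one needs the canonical bijection between these two sets to identify the double sums, though the argument does go through. Second, your equivariance formula should be checked against a fixed convention for the block permutation $\sigma\circ_{i}\eta$ and for whether the action is by $\sigma$ or $\sigma^{-1}$ on labels; as written it is one of the standard conventions and is fine, but in a full write-up the convention must match the one used in the definition of the $\SymGrp{n}$-action on $\mathcal{RT}(n)$. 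With these points addressed, the partial compositions assemble into a total composition $\gamma$ by the standard equivalence between partial and total composition data for connected $\SymGrp{}$-modules, as you say.
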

\begin{xmp}
\begin{equation*}
\Germ(\FlowerTwo{1}{2}{3},\Root)=\LFlower{1}{2}{3}{4}+\RFlower{1}{2}{3}{4}+\CFlower{1}{2}{3}{4}.
\end{equation*}
\end{xmp}
\begin{thm}[Chapoton, Livernet]
The $ PreLie $ operad is isomorphic to the $ RT $ operad. The isomorphism $ \varphi:PreLie\longrightarrow RT $ is realized by sending the generating operations of $ PreLie $ to $ \Germ $.
\end{thm}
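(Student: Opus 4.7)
The plan is to construct the operad morphism $\varphi\colon PreLie\to RT$ explicitly, check it is well defined by verifying the pre-Lie relation on $\Germ\in RT(2)$, and then prove bijectivity arity by arity.

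First I would check that the binary operation $\Germ\in RT(2)$ satisfies the pre-Lie relation under the partial compositions of $RT$. Directly from the definition of $\circ_{i}$, the composition $\Germ\circ_{1}\Germ$ expands into a sum of two $3$-trees, according to whether the new leaf is attached to the existing bottom vertex or grafted on top of the middle vertex; the composition $\Germ\circ_{2}\Germ$ is the single $3$-tree given by the linear chain. Computing the combination $\Germ\circ_{1}\Germ-\Germ\circ_{2}\Germ$, one sees that only the ``corolla'' $3$-tree survives, and this term is manifestly symmetric under the transposition $(2\ 3)$. This is exactly the pre-Lie relation, so the universal property of the operad $PreLie$ (given by generators and relations in $RT(2)$) produces a unique morphism of operads $\varphi\colon PreLie\to RT$ sending the generator $\{-,-\}$ to $\Germ$.

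Second I would prove that $\varphi$ is surjective on each arity. It suffices to show that $RT$ is generated, as an operad, by $\Germ$ and the unit $\Root$. I would argue by induction on the number $n$ of vertices: given a tree $\tau\in\mathcal{RT}(n)$ with $n\geq 2$, pick any leaf $v$ and let $\tau'\in\mathcal{RT}(n-1)$ be the tree obtained by deleting $v$ (with the natural relabeling). Then $\tau$ appears in the expansion of $\tau'\circ_{i}\Germ$, for $i$ the label of the parent of $v$ in $\tau'$, and one can solve for $\tau$ in terms of $\tau'\circ_{i}\Germ$ and trees of the same size but with fewer incoming edges at vertex $i$, themselves produced from trees of smaller size by analogous compositions. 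Iterating, every $\tau\in RT(n)$ is a $\mathbb{Z}$-linear combination of iterated compositions of $\Germ$, so lies in the image of $\varphi$.

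Third, and this is where the main work lies, I would establish injectivity of $\varphi$ in each arity. The strategy is to produce, on the free $PreLie$-algebra generated by a singleton $\{x\}$, a surjection onto the space of (non-labelled) rooted trees with vertices marked by $x$, compatible with the pre-Lie product, and conversely to equip the latter space with a genuine pre-Lie structure and check that the map it induces from the free pre-Lie algebra is inverse to the previous surjection. Equivalently, I would define a pre-Lie product directly on $\bigoplus_{n}RT(n)\otimes_{\SymGrp{n}}V^{\otimes n}$ by $\{T,U\}=\sum_{v\in T}T\cdot_{v}U$, where $T\cdot_{v}U$ grafts the root of $U$ onto the vertex $v$, check by a direct computation (splitting the two sums in $\{\{T,U\},W\}-\{T,\{U,W\}\}$ according to whether the second graft is on $T$ or on $U$) that the pre-Lie relation holds, and deduce that this is the free pre-Lie algebra on the generator $\Root$. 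Dualising and reintroducing $\SymGrp{n}$-equivariance then gives $\dim PreLie(n)=\dim RT(n)$, and combined with surjectivity this forces $\varphi$ to be an isomorphism. The main obstacle is this verification that the grafting product on rooted trees is free pre-Lie; once that combinatorial identity is checked, the isomorphism of operads follows by a standard dimension/rank comparison.
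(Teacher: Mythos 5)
The paper itself does not prove this statement; its proof is the citation ``See \cite{article:ChapotonLivernet01}'', so you are in effect reconstructing Chapoton and Livernet's argument. Your first two steps are sound. The computation $\Germ\circ_{1}\Germ-\Germ\circ_{2}\Germ=\FlowerTwo{1}{2}{3}$ is correct and verifies the pre-Lie relation, hence yields the morphism $\varphi$. Surjectivity by induction on the number of vertices also works, but note that the correction terms in $\tau'\circ_{i}\Germ$ are trees with the \emph{same} number of vertices as $\tau$, so the appeal to ``trees of smaller size'' does not close the induction as written; you need a secondary, well-founded induction, for instance downward on the total depth $\sum_{v}\mathrm{depth}(v)$, which strictly increases on every correction term and is maximised by the linear chain (itself an iterated composition of $\Germ$ with no correction terms). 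A cleaner route, better adapted to the normal forms used elsewhere in this paper, is to first generate the corollas via $F_{k}=\Germ\circ_{1}F_{k-1}-\sum_{j}F_{k-1}\circ_{j+1}\Germ$ and then obtain an arbitrary tree as $F_{k}(\Root,B_{1},\ldots,B_{k})$ from its branches; these composites produce a single term each because leaves have no incoming edges.

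The genuine gap is in your third step. You check that the grafting product $\{T,U\}=\sum_{v\in T}T\cdot_{v}U$ satisfies the pre-Lie identity and then ``deduce that this is the free pre-Lie algebra on the generator $\Root$''. That deduction is not available: the pre-Lie identity holds in \emph{every} pre-Lie algebra generated by one element, so verifying it says nothing about freeness. Freeness is exactly the assertion that the canonical surjection from the free pre-Lie algebra on one generator onto the grafting algebra of rooted trees has trivial kernel, i.e.\ that iterated graftings satisfy no relations beyond those forced by the pre-Lie identity, and this is the actual content of the Chapoton--Livernet theorem. To close the gap you must either (a) construct an explicit inverse from decorated trees to the free pre-Lie algebra, by induction on the decomposition of a tree into its root and branches using the symmetric brace operations built from the pre-Lie product, and verify that it is a morphism of pre-Lie algebras, or (b) bound $\dim PreLie(n)$ from above by $n^{n-1}$ by exhibiting a spanning set of the quotient of the free binary operad by the pre-Lie relation (a rewriting or Gr\"obner-basis argument). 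Your concluding dimension comparison also needs care: the degree-$n$ part of the free algebra on one generator only computes the $\SymGrp{n}$-coinvariants of $PreLie(n)$, so the comparison must be run on the multilinear part of the free algebra on $n$ generators; the phrase ``reintroducing $\SymGrp{n}$-equivariance'' is hiding precisely this step.
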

\begin{proof}
See \cite{article:ChapotonLivernet01}.
\end{proof}
From now on we do not make any difference between $ RT $ and $ PreLie $ if it is not strictly necessary and therefore we will talk about trees as elements of $ PreLie $.
\section{A basis of $ \Gamma(PreLie,V) $}\label{GammaPreLiebasis}
The aim of this section is to make explicit a basis of the module $ \Gamma(PreLie, V) $ when $ V $ is a $ \mathbb{K} $-module equipped with a fixed basis $ \mathcal{V} $. 
\begin{dfn}\label{basisdfn}
Let $ x_{1},\ldots,x_{n} $ be elements of $ V $, and $ \Tree $ be an $ n $-tree. We denote the element $ \Tree\otimes x_{1}\otimes\ldots\otimes x_{n} $ in $ T(RT,V) $ by $ \Tree\langle x_{1},\ldots, x_{n}\rangle $ and the class $ [\Tree\otimes x_{1}\otimes\ldots\otimes x_{n}] $ in $ S(RT,V) $ by $ \Tree(x_{1},\ldots, x_{n})$ . 
If we fix a basis $ \mathcal{V} $ of $ V $, then we call:
\begin{itemize}
\item canonical basis of $ T(RT,V) $ the set $  \tbasis(\mathcal{RT},\mathcal{V})=\lbrace \Tree\langle x_{1},\ldots, x_{n}\rangle \vert \Tree\in \mathcal{RT}(n), x_{i}\in  \mathcal{V}\rbrace $, 
\item and canonical basis of $ S(RT,V) $ the set $  \basis(\mathcal{RT},\mathcal{V})=\lbrace \Tree( x_{1},\ldots, x_{n}) \vert \Tree\in \mathcal{RT}(n), x_{i}\in  \mathcal{V}\rbrace $.
\end{itemize}
The epimorphism $ pr:T(RT,V)\longrightarrow S(RT,V) $ restricts to a surjective function 
\[ pr:\tbasis(\mathcal{RT},\mathcal{V})\longrightarrow\basis(\mathcal{RT},\mathcal{V}).\]
\end{dfn}
\begin{dfn}\label{Stab}
Let $ V $ be a free $\mathbb{K}$-module with a fixed basis $ \mathcal{V} $. Let $ \tTree=\tau\langle x_{1}, \ldots x_{n}\rangle $ be an element of 
$ \tbasis(\mathcal{RT},\mathcal{V}) $. The stabilizer of $ \tTree $, denoted by $\Stab{}{\tTree}$, is the subgroup of $\SymGrp{n}$ defined by:
\begin{equation*}
 \begin{split}
 \Stab{}{\tTree}\eqdef \lbrace \sigma\in \SymGrp{n}\mid \sigma^{\ast}\tTree=\tTree\rbrace,
 \end{split}
\end{equation*}
where we consider the diagonal action of permutations $\sigma \in \SymGrp{n}$ on the tensor $\tau\otimes x_{1}\otimes\ldots\otimes x_{n}$ which represents our element $\tTree=\tau\langle x_{1},\ldots,x_{n}\rangle$.
\end{dfn}
\begin{xmp}
Let $ V $ be a free $\mathbb{K}$-module with a fixed basis $ \mathcal{V} $, and $ x,y,z $ be elements of $ \mathcal{V} $. We have the following formulas:
\begin{equation*}
 \begin{split}
 \Stab{}{\FlowerTwo{1}{2}{3}\lbrace x,y,z\rbrace} = \lbrace id \rbrace,
 \end{split}
\end{equation*}
and
\begin{equation*}
 \begin{split}
 \Stab{}{\FlowerTwo{1}{2}{3}\lbrace y,x,x\rbrace}=\lbrace id, (2,3)\rbrace.
 \end{split}
\end{equation*}
\end{xmp}
\begin{dfn}
We define $ F_{n} $ to be the following labelled $ n+1 $-tree $ \Flower{n+1} $.
\end{dfn}
\begin{prp}
Let $ V $ be a free $\mathbb{K}$-module with a fixed basis $ \mathcal{V} $ and $ x_{0},\ldots, x_{r} $ be elements of $ \mathcal{V} $ such that $ x_{p}\neq x_{q} $ if $ p\neq q $ and $ p,q\neq 0 $. Consider the element $ F_{n} \langle x_{0},\underbrace{x_{1},\ldots,x_{1}}_{i_{1}},\ldots,\underbrace{x_{r},\ldots,x_{r}}_{i_{r}}\rangle $ in $T(\mathcal{RT}, \mathcal{V})$; then $ \Stab{}{F_{n}\langle x_{0},\underbrace{x_{1},\ldots,x_{1}}_{i_{1}},\ldots,\underbrace{x_{r},\ldots,x_{r}}_{i_{r}}\rangle} $ is isomorphic to $ \SymGrp{i_{1}}\times\ldots\times\SymGrp{i_{r}} $.
\end{prp}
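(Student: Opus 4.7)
The plan is to decompose the equation $\sigma^{\ast}\tTree = \tTree$ into two separate conditions on $\sigma$---one affecting the tree $F_{n}$ and one affecting the tuple $(x_{0}, x_{1}^{i_{1}}, \ldots, x_{r}^{i_{r}})$---and then to identify the subgroup of $\SymGrp{n+1}$ satisfying both.

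First, I would observe that $\tTree$ lies in $RT(n+1)\otimes V^{\otimes(n+1)}$ and is a pure tensor of basis elements, namely $F_{n}\in \mathcal{RT}(n+1)$ is a basis element of $RT(n+1)$ and $x_{0}\otimes x_{1}\otimes\cdots$ is a basis element of $V^{\otimes(n+1)}$ relative to $\mathcal{V}$. Since $RT(n+1)$ and $V^{\otimes(n+1)}$ are free $\K$-modules and the product of their bases is a basis of their tensor product, the equality $(\sigma F_{n})\otimes (\sigma\cdot(x_{0}\otimes\cdots)) = F_{n}\otimes(x_{0}\otimes\cdots)$ forces the two separate identities $\sigma F_{n} = F_{n}$ in $\mathcal{RT}(n+1)$ and $\sigma\cdot(x_{0}\otimes\cdots) = x_{0}\otimes\cdots$ in $V^{\otimes(n+1)}$.

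Second, I would analyse the stabilizer of $F_{n}$ on its own. The corolla consists of a root (labelled $1$) together with $n$ leaves (labelled $2,\ldots,n+1$), and the $n$ leaves are mutually interchangeable as positions in the abstract rooted graph. Hence $\sigma F_{n} = F_{n}$ as a labelled tree if and only if $\sigma$ sends the root vertex to itself, i.e.\ $\sigma(1)=1$, and the set of such $\sigma$ forms a subgroup canonically isomorphic to $\SymGrp{\{2,\ldots,n+1\}}\cong \SymGrp{n}$.

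Third, among the $\sigma$ with $\sigma(1)=1$, I would identify those which additionally fix the tensor $x_{0}\otimes x_{1}^{i_{1}}\otimes\cdots\otimes x_{r}^{i_{r}}$. Position $1$ is already pinned down, and the remaining $n$ positions carry $x_{1}$ ($i_{1}$ times), then $x_{2}$ ($i_{2}$ times), and so on; since $x_{1},\ldots,x_{r}$ are pairwise distinct basis elements of $\mathcal{V}$, the tensor is stabilized exactly by those $\sigma$ which preserve each ``colour block''. These block-preserving permutations make up the subgroup $\SymGrp{i_{1}}\times\cdots\times\SymGrp{i_{r}}$, and combining with the second step yields the desired isomorphism. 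The main point to handle carefully is the separation step: one must stress that a pure tensor of basis elements in $RT(n+1)\otimes V^{\otimes(n+1)}$ is fixed by the diagonal action of $\sigma$ if and only if both factors are fixed individually---an argument that is valid in $T(RT,V)$ but would fail in the coinvariant quotient $S(RT,V)$, which is precisely why we work with the element $\tTree$ at the level of $T(RT,V)$.
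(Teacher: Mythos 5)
Your proposal is correct and follows essentially the same route as the paper's own proof: split $\sigma^{\ast}\tTree=\tTree$ into the condition that $\sigma$ fixes the corolla $F_{n}$ (forcing the root position, hence $x_{0}$, to be fixed) and the condition that $\sigma$ fixes the tuple of labels, whose stabilizer is the block subgroup $\SymGrp{i_{1}}\times\cdots\times\SymGrp{i_{r}}$. The only difference is that you spell out the justification for the separation step (pure tensors of basis elements in $RT(n+1)\otimes V^{\otimes(n+1)}$), which the paper takes for granted.
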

\begin{proof}
An element $ \sigma $ of $ \SymGrp{n+1} $ is in $ \Stab{}{F_{n}\langle x_{0},\underbrace{x_{1},\ldots,x_{1}}_{i_{1}},\ldots,\underbrace{x_{r},\ldots,x_{r}}_{i_{r}}\rangle} $ if its action fixes both $ F_{n} $ and $ x_{0},\underbrace{x_{1},\ldots,x_{1}}_{i_{1}},\ldots,\underbrace{x_{r},\ldots,x_{r}}_{i_{r}} $. Since $ F_{n} $ should be fixed we have that $ x_{0} $ is fixed and then $ \sigma $ has to be in the stabilizer of $ \underbrace{x_{1},\ldots,x_{1}}_{i_{1}},\ldots,\underbrace{x_{r},\ldots,x_{r}}_{i_{r}} $ that is isomorphic to $ \SymGrp{i_{1}}\times\ldots\times\SymGrp{i_{r}} $.
\end{proof}
\begin{dfn}\label{Def:Dec}
Let $ V $ be a free $\mathbb{K}$-module with a fixed basis $ \mathcal{V} $. Let $ \sTree $ be an element of $ \basis(\mathcal{RT},\mathcal{V}) $. We define $ \Dec(\sTree) $ to be the element of $ \basis(\mathcal{RT},\basis(\mathcal{RT},\mathcal{V})) $: 
\[ F_{r}(x_{0},B_{1},\ldots,B_{r}), \] 
where $ F_{r} $ is isomorphic as non labelled rooted tree to the full sub-corolla with root $ x_{0} $ the element of $ \mathcal{V} $ corresponding to the root of $ \sTree $, and $ B_{j} $ are elements of $ \basis(\mathcal{RT},\mathcal{V}) $ corresponding to the branches of $ \sTree $. 
\end{dfn}

In the literature the elements $F_{r}(x,T_{1},\ldots,T_{r})$ are sometimes denoted $B(x,T_{1},\ldots,T_{r})$, see \cite{article:ConnesKreimer98}.

\begin{xmp}
Let $\sTree$ be the element:
\[ \LFlower{1}{2}{3}{4}(x_{0},x_{1},x_{2},x_{3}). \]
We have:
\[ \Dec(\sTree)= F_{2}(x_{0},\Germ(x_{1}, x_{3}), \Root(x_{2})).\]
\end{xmp}

\begin{rmk}
Let $ V $ be a free $\mathbb{K}$-module with a fixed basis $ \mathcal{\mathcal{V}} $, and $ \sTree $ be an element of $ \basis(\mathcal{RT},\mathcal{V}) $. If $ \mu:S(RT,S(RT,V))\longrightarrow S(RT,V) $ is the composition product for the operad $ RT $ then $ \mu(\Dec(\sTree))=\sTree $.
\end{rmk}
\begin{dfn}
Let $ V $ be a free $\mathbb{K}$-module with a fixed basis $ \mathcal{\mathcal{V}} $. By iterating the process of Definition \ref{Def:Dec}, we can decompose any element $\sTree \in  S(\mathcal{RT},\mathcal{V}) $ into a composition of corollas whose roots are labelled by elements of the basis $\mathcal{V}$. We refer to this decomposition as the normal form of $\sTree$. It is unique up to the permutations of the non root entry of corollas.
\end{dfn}
\begin{dfn}
Let $ V $ be a free $\mathbb{K}$-module with a fixed basis $ \mathcal{V} $. Let $ \tTree $ be an element of $ \tbasis(\mathcal{RT},\mathcal{V}) $,  and $ \sigma $ be an element in $ \SymGrp{n} $. Then $ \Stab{}{\tTree} $ is isomorphic to $ \Stab{}{\sigma^{\ast}\tTree} $. Therefore we can define the group $ \Stab{}{\sTree} $ where $ \sTree $ is an element of $ \basis(\mathcal{RT},\mathcal{V}) $ as $ \Stab{}{\tTree} $ where $ \tTree $ is in the pre-image of $ \sTree $ under $ pr:\tbasis(\mathcal{RT},\mathcal{V})\longrightarrow \basis(\mathcal{RT},\mathcal{V}) $.
\end{dfn}
The group $ \Stab{}{\tTree} $ can be computed by induction.
\begin{prp}\label{PropStabDec}
Let $ V $ be a free $\mathbb{K}$-module with a fixed basis $ \mathcal{V} $, and $ \tTree $ be an element of $ \tbasis(\mathcal{RT},\mathcal{V}) $. Then $ \Stab{}{\tTree} $ is isomorphic to  $ \Stab{}{\Dec(\tTree)}\ltimes (\Stab{}{B_{1}}\times\ldots\times \Stab{}{B_{r})} $, where the semi-direct product is defined by the action of $ \Stab{}{\Dec(\tTree)} $ which permutes isomorphic branches.
\end{prp}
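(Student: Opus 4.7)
The plan is to exhibit a split short exact sequence
\[
1 \to \Stab{}{B_{1}} \times \ldots \times \Stab{}{B_{r}} \to \Stab{}{\tTree} \to \Stab{}{\Dec(\tTree)} \to 1,
\]
from which the claimed semidirect product decomposition will follow. First, I would observe that any $\sigma \in \Stab{}{\tTree}$ must fix the vertex of $\tau$ labelled by the root, since the root is the unique minimum of the tree pre-order and any tree automorphism preserves this extremal element. Hence $\sigma$ restricts to a bijection of the non-root vertices that permutes the branches rooted at the children of the root, producing a permutation $\pi \in \SymGrp{r}$ of the set $\{B_{1},\ldots,B_{r}\}$. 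The equality $\sigma^{*}\tTree = \tTree$ forces $B_{\pi(i)} = B_{i}$ as labelled $\mathcal{V}$-trees, which is precisely the condition that $\pi$ lies in $\Stab{}{\Dec(\tTree)}$; this defines the right-hand map in the sequence above.

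Next, I would identify the kernel of the map $\sigma \mapsto \pi$. An element $\sigma$ whose branch-permutation is trivial fixes each branch $B_{i}$ setwise, and its restriction to the vertex labels of $B_{i}$ is an automorphism of the labelled tree $B_{i}$, i.e., an element of $\Stab{}{B_{i}}$. Conversely, any tuple of such automorphisms, combined with the identity on the root, assembles into a permutation in $\Stab{}{\tTree}$. Thus the kernel is naturally $\Stab{}{B_{1}} \times \ldots \times \Stab{}{B_{r}}$.

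To split the sequence I would group the branches by their isomorphism class in $\basis(\mathcal{RT},\mathcal{V})$: say there are $k$ classes with respective multiplicities $m_{1},\ldots,m_{k}$, so that $\Stab{}{\Dec(\tTree)} \cong \SymGrp{m_{1}} \times \ldots \times \SymGrp{m_{k}}$. For each class I fix a linear ordering of its members and, for each pair in the class, a canonical label-preserving isomorphism between them. Given $\pi \in \Stab{}{\Dec(\tTree)}$, I lift it to the unique permutation of vertex labels of $\tTree$ that transports labels along these canonical isomorphisms within each class; this yields a section of the surjection. The induced semidirect product action is the tautological one: $\pi$ permutes the factors $\Stab{}{B_{i}}$ according to its action on the indices of isomorphic branches.

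The main obstacle is checking that the chosen lift is a group homomorphism and not merely a set-theoretic section. This amounts to the standard wreath product bookkeeping: each class of $m_{j}$ mutually isomorphic branches contributes a factor $\Stab{}{B_{j,1}} \wr \SymGrp{m_{j}}$ to $\Stab{}{\tTree}$, and the product of these wreath products is canonically the semidirect product in the statement. Once the reference isomorphisms are fixed within each class, the compatibility of the section with composition follows from the fact that the wreath product's defining cocycle is the action by permutation of coordinates, which is exactly what we obtain here.
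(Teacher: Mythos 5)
Your proof is correct and takes essentially the same route as the paper, which simply asserts the inclusion of the semidirect product into $\Stab{}{\tTree}$ together with the unique factorization of any stabilizer element into a branch-permuting part and a within-branch part. Your version makes explicit what the paper leaves implicit — the exact sequence, the identification of the kernel, and the choice of reference isomorphisms needed to define the splitting and the wreath-product action — which is a more careful write-up of the same decomposition rather than a different argument.
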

\begin{proof}
There is an obvious inclusion of $ \Stab{}{\Dec(\tTree)}\ltimes (\Stab{}{B_{1}}\times\ldots\times \Stab{}{B_{r}}) $ into $ \Stab{}{\tTree} $. Since any element of $ \Stab{}{\tTree} $ can be written in a unique way as a product of an element in $ \Stab{}{\Dec(\tTree)} $ and an element in $ (\Stab{}{B_{1}}\times\ldots\times \Stab{}{B_{r}}) $ the inclusion is actually an isomorphic.
\end{proof}
\begin{dfn}
Let $ V $ be a free $\mathbb{K}$-module with a fixed basis $ \mathcal{V} $. Let $ \tTree $ be an element of $ \tbasis(\mathcal{RT},\mathcal{V}) $. We set:
\begin{equation*}
\begin{split}
\gTree \eqdef \sum_{\sigma\in \SymGrp{n}/\Stab{}{\tTree}}  \sigma^{\ast}\tTree ,
\end{split}
\end{equation*}
\end{dfn}

\begin{rmk}
Let $ V $ be a free $\mathbb{K}$-module with a fixed basis $ \mathcal{V} $. Let $ \tTree $ be an element of $ \tbasis(\mathcal{RT},\mathcal{V}) $. We clearly have $ \gTree=\orb\sigma^{\ast}\tTree $ for any permutation $ \sigma $. Hence, this map passes to the quotient over coinvariants and induces a map of $\mathbb{K}$-modules $ \orb:S(RT,V)\longrightarrow \Gamma(RT,V) $ by linearity. Let $ x $ be an element of $ V $. It is easy to show that $ \orb \Root(x) $ is equal to $ \Root \otimes x $ (where $ \Root $ is the unique $ 1 $-tree). If there is no risk of confusion we will denote this element just by $ x $.
\end{rmk}

Notice that, in general, $ Tr(\tTree) $ differs from $ \gTree $.
\begin{xmp}\label{OrbVyxx}
Let $ V $ be a free $\mathbb{K}$-module with a fixed basis $ \mathcal{V} $, and $ x,y $ be elements of $ \mathcal{V} $, we compute $ \orb \FlowerTwo{1}{2}{3}\lbrace x,y,y \rbrace $: 
\begin{multline*}
\orb\FlowerTwo{1}{2}{3}(x,y,y)=\FlowerTwo{1}{2}{3}\otimes x \otimes y \otimes y + \FlowerTwo{2}{1}{3} \otimes y \otimes x \otimes y +\FlowerTwo{3}{2}{1} \otimes y \otimes y \otimes x.
\end{multline*}
\end{xmp}
Given a free $\mathbb{K}$-module $ V $, we want to compare the $\mathbb{K}$-modules $ \Gamma (RT,V) $ and $ S(RT,V) $. We show that they are isomorphic and that this isomorphism is realized by the map $ \orb:S(PreLie,V)\longrightarrow\Gamma (PreLie,V) $.

We use the following elementary result:

\begin{lmm}\label{lmmGset}
Let $ G $ be a group and $ X $ be a $ G $-set. There exists a isomorphism between $ \K[X]_{G} $ and $ \K[X]^{G} $, where $\K[X]$ is the free $\K$-module over the set $X$. \hfill $\square$
\end{lmm}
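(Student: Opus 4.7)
The plan is to exhibit an explicit isomorphism via orbit sums, which amounts to observing that both $\K[X]_{G}$ and $\K[X]^{G}$ admit natural bases indexed by the orbit set $X/G$.

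First I would analyze the coinvariants $\K[X]_{G}$. Since the $G$-action on $\K[X]$ is induced by a permutation action on the basis $X$, the defining relations $x \sim gx$ of the coinvariant quotient identify precisely the basis vectors that lie in a common $G$-orbit. Choosing a system of representatives $\{x_{\mathcal{O}}\}_{\mathcal{O} \in X/G}$, the classes $[x_{\mathcal{O}}]$ thus form a $\K$-basis of $\K[X]_{G}$, so that $\K[X]_{G} \cong \bigoplus_{\mathcal{O} \in X/G} \K$.

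Next I would identify a basis of the invariants $\K[X]^{G}$. A vector $v = \sum_{x} c_{x}\, x \in \K[X]$ is $G$-invariant if and only if the coefficient function $x \mapsto c_{x}$ is constant on each orbit. Since sums in $\K[X]$ are finite and the relevant orbits (for instance those arising from the action of the finite symmetric groups $\SymGrp{n}$ in the applications of this lemma) are finite, such a $v$ decomposes uniquely as a linear combination of the orbit sums $\Sigma_{\mathcal{O}} := \sum_{y \in \mathcal{O}} y$. Hence $\{\Sigma_{\mathcal{O}}\}_{\mathcal{O} \in X/G}$ is a $\K$-basis of $\K[X]^{G}$.

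The isomorphism is then obtained by the linear extension of the bijection of bases $[x_{\mathcal{O}}] \longmapsto \Sigma_{\mathcal{O}}$. Equivalently, one can describe it as the map induced on coinvariants by $x \mapsto \sum_{g \in G/\Stab{}{x}} g\cdot x$, and verify directly that this is well-defined on the quotient, lands in the invariants, and is bijective by comparing it basis-wise with the description above. The only subtlety, and the one place where one must be careful, is to observe that orbits must be finite in order for the orbit sum to define an element of $\K[X]$; this is harmless in the intended applications since the groups involved are the finite symmetric groups, so no extra hypothesis is needed there.
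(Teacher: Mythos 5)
Your proposal is correct and is exactly the standard argument the paper leaves implicit: the lemma is stated as an elementary result with no written proof, and the intended identification is precisely your orbit-sum bijection $[x_{\mathcal{O}}]\mapsto\Sigma_{\mathcal{O}}$ between the basis of $\K[X]_{G}$ given by orbit representatives and the basis of $\K[X]^{G}$ given by orbit sums. You are also right to flag the one genuine subtlety — the statement as written needs all orbits to be finite for the orbit sums to live in $\K[X]$ (otherwise $\K[X]^{G}$ only sees the finite orbits while $\K[X]_{G}$ sees all of them) — and this hypothesis is indeed satisfied wherever the lemma is invoked in the paper, since the acting groups are the finite symmetric groups $\SymGrp{n}$.
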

\begin{prp}\label{TrStabinj}
Let $ V $ be a free $\mathbb{K}$-module with a fix basis $ \mathcal{V} $. We define the set $ \gbasis(\mathcal{RT},\mathcal{V})=\lbrace \orb \sTree\vert \sTree\in \basis(\mathcal{RT},\mathcal{V})\rbrace $. The set $ \gbasis(\mathcal{RT},\mathcal{V}) $ forms a basis for the $\mathbb{K}$-module $ \Gamma (RT,V) $.
\end{prp}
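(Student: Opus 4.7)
The plan is to reduce the claim to the degree-wise statement that $\Gamma(RT,V)(n) = RT(n) \otimes^{\SymGrp{n}} V^{\otimes n}$ is a free $\K$-module on the orbit sums $\orb\sTree$, and then invoke Lemma \ref{lmmGset} applied to the canonical tensor basis of $T(RT,V)$.

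First, I would observe that, since $V$ is free with basis $\mathcal{V}$ and $RT(n) = \K[\mathcal{RT}(n)]$ is free by construction, the $\K$-module $RT(n) \otimes V^{\otimes n}$ is identified with the free $\K$-module on the set $\tbasis_{n}(\mathcal{RT},\mathcal{V}) = \{\Tree\langle x_{1},\ldots,x_{n}\rangle \mid \Tree \in \mathcal{RT}(n),\ x_{i}\in \mathcal{V}\}$, and the diagonal $\SymGrp{n}$-action on $RT(n)\otimes V^{\otimes n}$ is exactly the permutation action on this basis set. Hence $\tbasis_{n}(\mathcal{RT},\mathcal{V})$ is a $\SymGrp{n}$-set and we have canonical identifications
\[
S(RT,V)(n) = \K[\tbasis_{n}(\mathcal{RT},\mathcal{V})]_{\SymGrp{n}}, \qquad \Gamma(RT,V)(n) = \K[\tbasis_{n}(\mathcal{RT},\mathcal{V})]^{\SymGrp{n}}.
\]
The orbits of the $\SymGrp{n}$-action on $\tbasis_{n}(\mathcal{RT},\mathcal{V})$ are in bijection with $\basis_{n}(\mathcal{RT},\mathcal{V})$ via the surjection $pr$, so the classes $\sTree \in \basis_{n}(\mathcal{RT},\mathcal{V})$ form a $\K$-basis of $S(RT,V)(n)$ by the usual description of coinvariants of a permutation module.

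Next I would apply Lemma \ref{lmmGset} with $G = \SymGrp{n}$ and $X = \tbasis_{n}(\mathcal{RT},\mathcal{V})$. This lemma yields an isomorphism between the coinvariants and the invariants of the permutation module $\K[X]$, and the explicit isomorphism sends the class $[\tTree] \in \K[X]_{G}$ to the orbit sum $\sum_{\sigma \in \SymGrp{n}/\Stab{}{\tTree}} \sigma^{\ast}\tTree$, which is by definition $\orb\sTree$ for $\sTree = pr(\tTree)$. Since the coinvariants are $\K$-free on the orbits $\sTree \in \basis_{n}(\mathcal{RT},\mathcal{V})$, the invariants are $\K$-free on the images $\orb\sTree$. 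In other words, the set $\{\orb\sTree \mid \sTree \in \basis_{n}(\mathcal{RT},\mathcal{V})\}$ is a $\K$-basis of $\Gamma(RT,V)(n)$. Taking the direct sum over $n$ yields the desired conclusion that $\gbasis(\mathcal{RT},\mathcal{V})$ is a basis of $\Gamma(RT,V)$.

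The only step requiring a little care is checking that the isomorphism provided by Lemma \ref{lmmGset} is indeed realized by the orbit-sum construction $\orb$ (as opposed to, say, the trace map $Tr$, which differs from it by a factor $|\Stab{}{\tTree}|$ and is therefore not an isomorphism in general positive characteristic, cf.\ Remark \ref{TraceIso}). This is immediate from the standard proof of the lemma: both $\K[X]_{G}$ and $\K[X]^{G}$ are free $\K$-modules on the set of $G$-orbits, the former via orbit classes and the latter via orbit sums, and matching these two bases gives precisely the map $\orb$. Thus no additional calculation is required beyond identifying the canonical tensor basis as a permutation $\SymGrp{n}$-set and invoking the lemma.
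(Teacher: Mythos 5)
Your proposal is correct and follows essentially the same route as the paper: the paper's proof likewise identifies the canonical tensor basis of $T(RT,V)$ as a permutation $\SymGrp{n}$-set and invokes Lemma \ref{lmmGset} to realize the isomorphism between coinvariants and invariants via the orbit-sum map $\orb$. Your additional remark distinguishing $\orb$ from the trace map $Tr$ (which fails to be an isomorphism in positive characteristic) is a worthwhile clarification but not a departure from the paper's argument.
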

\begin{proof}
By definition the map $ \orb:S(RT,V)\longrightarrow \Gamma (RT,V) $ gives a set-map $ \orb:\basis(\mathcal{RT},\mathcal{V})\longrightarrow \gbasis(\mathcal{RT},\mathcal{V}) $ defined by the bijection of Lemma (\ref{lmmGset}).
\end{proof}
\section{The equivalence between $ \Lambda PreLie $-algebras and $ p-PreLie $-algebras}
In this section we assume that $ \mathbb{K} $ is a field of positive characteristic $ p $ . We show that the categories of $ \Lambda PreLie $-algebras  and $ p-PreLie $-algebras are isomorphic. Let us observe that this implies that the category of $ p-PreLie $-algebras is a monadic subcategory of $ PreLie $-algebras.

In \cite{article:Dokas13} I. Dokas proves that $ \Gamma PreLie $-algebra are $ p $-restricted $ PreLie $-algebras. Here we improve this result by showing that the restricted $ PreLie $ structure is given by the $ \Lambda PreLie $ action on $ \Gamma PreLie $. 

\begin{rmk}
In \cite{article:Dokas13} I. Dokas introduces a more general notion of $p$-restricted $PreLie$-algebras, here we consider the less general definition given by in A. Dzhumadil'daev in \cite{article:Dzhumadil01}.
\end{rmk}

Recall that $ \Lambda (PreLie,V) $ is the target of the epimorphism given by the epi-mono decomposition of the trace map.
\[\xymatrix{
Ker(Tr_{V})\ar@{^{(}->}[dr]\\ 
& S(PreLie,V)\ar@{->>}[dr]\ar[rr]^{Tr_{V}} & &  \Gamma (PreLie,V)\\
& & \Lambda (PreLie,V) \ar@{^{(}->}[ur] & .}
\]
We compute the kernel of the trace map.
\begin{prp}\label{prpTrStab}
Let $ V $ be a $\mathbb{K}$-module with a fixed basis $ \mathcal{V} $. Let $ \sTree $ be an element of $ \basis(\mathcal{RT},\mathcal{V}) $. We have $ Tr(\sTree)=\vert \Stab{}{\sTree}\vert \orb\sTree $.
\end{prp}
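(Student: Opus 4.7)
The plan is to unfold both sides of the identity and recognize the result as the orbit–stabilizer equation applied to the action of $\SymGrp{n}$ on a lift of $\sTree$. First, I would choose any lift $\tTree \in \tbasis(\mathcal{RT},\mathcal{V})$ of $\sTree$ under the projection $pr$, writing $\tTree = \tau\otimes x_{1}\otimes\cdots\otimes x_{n}$. Applying the definition of the trace map directly gives
\[
Tr(\sTree) \;=\; \sum_{\sigma\in\SymGrp{n}}\sigma^{\ast}\tTree
\]
in $\Gamma(RT,V)$; the right-hand side is visibly $\SymGrp{n}$-invariant, so it does land in the invariant subspace, and the sum does not depend on the choice of lift because any two lifts differ by a permutation.

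Second, I would partition $\SymGrp{n}$ into left cosets of $\Stab{}{\tTree}$. For $\rho\in\Stab{}{\tTree}$ and $\tau\in\SymGrp{n}$, the computation $(\tau\rho)^{\ast}\tTree = \tau^{\ast}\rho^{\ast}\tTree = \tau^{\ast}\tTree$ shows that the assignment $\sigma\mapsto\sigma^{\ast}\tTree$ is constant on each coset $\tau\cdot\Stab{}{\tTree}$, and each such coset has exactly $|\Stab{}{\tTree}|$ elements. Collecting the sum coset by coset therefore yields
\[
\sum_{\sigma\in\SymGrp{n}}\sigma^{\ast}\tTree \;=\; |\Stab{}{\tTree}|\cdot \sum_{\bar\tau\in\SymGrp{n}/\Stab{}{\tTree}}\tau^{\ast}\tTree \;=\; |\Stab{}{\tTree}|\cdot \orb\tTree,
\]
which is the desired formula $|\Stab{}{\sTree}|\cdot \orb\sTree$ once one invokes the conventions from the excerpt that $\orb$ descends through $pr$ to a well defined map on $\basis(\mathcal{RT},\mathcal{V})$, and that $\Stab{}{\sTree}$ is defined as $\Stab{}{\tTree}$ for any lift.

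The proof is essentially the orbit–stabilizer theorem applied to the diagonal $\SymGrp{n}$-action on $RT(n)\otimes V^{\otimes n}$, so there is no real obstacle. The only point requiring a moment of care is the well-definedness of $|\Stab{}{\sTree}|$ and $\orb\sTree$ independent of the chosen lift $\tTree$, but this is precisely what was recorded in the two definitions immediately preceding the statement (stabilizers of elements in the same $\SymGrp{n}$-orbit are conjugate, hence of equal cardinality).
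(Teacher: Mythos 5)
Your proposal is correct and follows essentially the same route as the paper: both unfold $Tr(\sTree)$ as the full sum $\sum_{\sigma\in\SymGrp{n}}\sigma^{\ast}\tTree$ over a lift, split that sum over the left cosets of $\Stab{}{\tTree}$, and identify the resulting coset sum with $\vert\Stab{}{\sTree}\vert\,\orb\sTree$. Your version merely makes explicit the choice of lift and the well-definedness points that the paper leaves implicit.
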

\begin{proof}
Let $ \sTree $ be equal to $ \Tree(x_{1},\ldots, x_{n}) $ for some tree $ \Tree $ and $ x_{1},\ldots, x_{n} $ elements of $ \mathcal{V} $. Then the following equation holds:
\begin{multline*}
Tr(\Tree(x_{1},\ldots, x_{n}))=\sum_{\sigma\in \SymGrp{n}}\sigma^{\ast}\Tree \langle x_{1},\ldots, x_{n}\rangle=\\
\sum_{\Stab{}{\Tree( x_{1},\ldots, x_{n})}}\sum_{\sigma\in \frac{\SymGrp{n}}{\Stab{}{\Tree(x_{1},\ldots, x_{n})}}}\sigma^{\ast}\Tree\langle x_{1},\ldots, x_{n}\rangle=\vert \Stab{}{\sTree}\vert \orb \sTree.
\end{multline*}
\end{proof}
\begin{crl}
Let $ V $ be a $\mathbb{K}$-module with a fixed basis $ \mathcal{V} $. The kernel of the trace map is linearly generated by the elements $ \sTree $ of $ \basis(\mathcal{RT},\mathcal{V}) $ such that $ \vert \Stab{}{\sTree}\vert $ is a multiple of $ p $.
\end{crl}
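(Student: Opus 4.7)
The plan is to leverage Proposition \ref{prpTrStab} together with the description of the bases of $S(PreLie,V)$ and $\Gamma(PreLie,V)$ from Section \ref{GammaPreLiebasis}. The key point is that the trace map is essentially ``diagonal'' with respect to the canonical bases $\basis(\mathcal{RT},\mathcal{V})$ and $\gbasis(\mathcal{RT},\mathcal{V})$: by Proposition \ref{prpTrStab}, for each $\sTree \in \basis(\mathcal{RT},\mathcal{V})$ we have $Tr(\sTree) = |\Stab{}{\sTree}| \cdot \orb\sTree$, and the scalar $|\Stab{}{\sTree}|$ is computed in the ground field of characteristic $p$.

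First I would recall that $\basis(\mathcal{RT},\mathcal{V})$ forms a $\K$-basis of $S(PreLie,V) = S(RT,V)$ by Definition \ref{basisdfn}, so every element $u \in S(PreLie,V)$ admits a unique expansion $u = \sum_{\sTree} \lambda_{\sTree}\, \sTree$ with $\lambda_{\sTree} \in \K$. Applying the trace map and using Proposition \ref{prpTrStab} termwise gives
\[
Tr(u) = \sum_{\sTree} \lambda_{\sTree}\, |\Stab{}{\sTree}|\, \orb\sTree.
\]
Next I would invoke Proposition \ref{TrStabinj}, which asserts that the family $\gbasis(\mathcal{RT},\mathcal{V}) = \{\orb\sTree \mid \sTree \in \basis(\mathcal{RT},\mathcal{V})\}$ is a $\K$-basis of $\Gamma(PreLie,V)$. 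In particular, the images $\orb\sTree$ are linearly independent in $\Gamma(PreLie,V)$.

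From the linear independence of the $\orb\sTree$, the vanishing condition $Tr(u) = 0$ is equivalent to $\lambda_{\sTree}\, |\Stab{}{\sTree}| = 0$ in $\K$ for every $\sTree \in \basis(\mathcal{RT},\mathcal{V})$. Since $\K$ is a field of characteristic $p$, the integer $|\Stab{}{\sTree}|$ vanishes in $\K$ precisely when $p$ divides $|\Stab{}{\sTree}|$, and is invertible otherwise. Consequently, the coefficient $\lambda_{\sTree}$ must vanish whenever $p \nmid |\Stab{}{\sTree}|$, while it can be arbitrary when $p \mid |\Stab{}{\sTree}|$. This shows that $\ker(Tr)$ is exactly the free $\K$-submodule of $S(PreLie,V)$ generated by those $\sTree \in \basis(\mathcal{RT},\mathcal{V})$ with $p \mid |\Stab{}{\sTree}|$, which is the claim of the corollary.

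The argument is essentially formal once the two structural results (the diagonal formula of Proposition \ref{prpTrStab} and the basis statement of Proposition \ref{TrStabinj}) are in hand; there is no real obstacle. The only mild subtlety to flag is that one must know a priori that the elements $\orb\sTree$ are linearly independent in $\Gamma(PreLie,V)$, which is exactly what Proposition \ref{TrStabinj} provides via the bijection $\K[X]_{G} \cong \K[X]^{G}$ of Lemma \ref{lmmGset}; without this, one could not separate the scalar coefficients $\lambda_{\sTree}\, |\Stab{}{\sTree}|$ and deduce the componentwise vanishing condition.
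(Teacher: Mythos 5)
Your argument is correct and is essentially the paper's own proof: the paper likewise combines Proposition \ref{prpTrStab} with the fact that $\sTree\mapsto\orb\sTree$ carries the basis $\basis(\mathcal{RT},\mathcal{V})$ of $S(RT,V)$ bijectively onto the basis $\gbasis(\mathcal{RT},\mathcal{V})$ of $\Gamma(RT,V)$, so that the trace is diagonal with entries $\vert\Stab{}{\sTree}\vert$ reduced mod $p$. You have merely written out the coefficient-by-coefficient argument that the paper leaves implicit.
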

\begin{proof}
The proof follows from Proposition \ref{prpTrStab} and from the observation that the map $\sTree\mapsto \orb\sTree$  defines a one-to-one correspondence from a basis of $S(RT,V)$ to a basis of $\Gamma (RT,V)$.
\end{proof}
\begin{lmm}\label{LmmGenKer}
Let $ V $ be a $\mathbb{K}$-module with a fixed basis $ \mathcal{V} $. Let $ \sTree $ be an element of $ \basis(\mathcal{RT},\mathcal{V}) $. Then $ \sTree $ has trace zero if and only if the expression $ F_{n+p}(x,\underbrace{B,\ldots, B}_{p}, B_{1},\ldots, B_{n}) $ with $ B $ and $ B_{i}\in \basis(\mathcal{RT},\mathcal{V})  $ appears in its normal form.
\end{lmm}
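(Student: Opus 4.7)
The plan is to reduce this statement to a statement about stabilizer orders via the corollary preceding the lemma, and then use the inductive computation of stabilizers from Proposition \ref{PropStabDec}.

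First, by the corollary to Proposition \ref{prpTrStab}, the element $\sTree$ has trace zero if and only if $|\Stab{}{\sTree}|$ is divisible by $p$. So the task becomes: characterize the normal forms of $\sTree$ for which $p \mid |\Stab{}{\sTree}|$. I would next apply Proposition \ref{PropStabDec} iteratively along the normal form decomposition. At the top level, writing $\Dec(\sTree) = F_{r}(x_{0}, B_{1}, \ldots, B_{r})$, Proposition \ref{PropStabDec} gives
\[
\Stab{}{\sTree} \cong \Stab{}{\Dec(\sTree)} \ltimes \bigl(\Stab{}{B_{1}} \times \cdots \times \Stab{}{B_{r}}\bigr),
\]
so by induction on the number of vertices, $|\Stab{}{\sTree}|$ is the product of $|\Stab{}{\Dec(\sTree')}|$ taken over all sub-corollas $\Dec(\sTree')$ appearing in the normal form of $\sTree$. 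It remains to compute each corolla factor.

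Second, I would invoke the proposition concerning $\Stab{}{F_{n}\langle x_{0},\underbrace{x_{1},\ldots,x_{1}}_{i_{1}},\ldots,\underbrace{x_{r},\ldots,x_{r}}_{i_{r}}\rangle} \cong \SymGrp{i_{1}} \times \cdots \times \SymGrp{i_{r}}$, or rather its direct analogue for corollas whose inputs are labelled by \emph{elements of the basis $\basis(\mathcal{RT},\mathcal{V})$} rather than basis elements of $\mathcal{V}$: the stabilizer of $F_{r}(x_{0},B_{1},\ldots,B_{r})$ is isomorphic to $\prod_{[B]} \SymGrp{k_{[B]}}$, where $k_{[B]}$ is the number of branches $B_{j}$ of the isomorphism class $[B]$. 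The proof is the same: a permutation of the $r$ inputs of $F_{r}$ stabilizes the whole corolla if and only if it preserves the equivalence class of each input, so the stabilizer factors through permutations of identical branches only.

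Third, I would combine the two previous steps to conclude that $|\Stab{}{\sTree}|$ is a product of factorials $k_{[B]}!$ indexed by all sub-corollas in the normal form of $\sTree$ and by the isomorphism classes $[B]$ of branches occurring at each such sub-corolla. Since $p$ is prime, this product is divisible by $p$ if and only if some single factor $k_{[B]}!$ is divisible by $p$, which happens if and only if $k_{[B]} \geq p$. Thus $p \mid |\Stab{}{\sTree}|$ if and only if some sub-corolla in the normal form of $\sTree$ has $\geq p$ branches of the same isomorphism class; equivalently, the normal form of $\sTree$ contains an expression of the shape $F_{n+p}(x,\underbrace{B,\ldots,B}_{p},B_{1},\ldots,B_{n})$, which is exactly the claim.

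The only delicate point is the bookkeeping for the iterative application of Proposition \ref{PropStabDec}: one must be careful that the semi-direct product produces a group of the expected order and that ``branches of the same isomorphism class'' is well-defined and compatible with the inductive decomposition (two branches $B_{i}$, $B_{j}$ as labelled trees with labels in $\mathcal{V}$ are identified precisely when their normal forms coincide up to the already-accounted permutations). Once this is unpacked, the divisibility argument by $p$ on a product of factorials is immediate.
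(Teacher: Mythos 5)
Your proposal is correct and follows essentially the same route as the paper: the paper's (one-line) proof likewise reduces the statement via the corollary of Proposition \ref{prpTrStab} to the divisibility of $\vert\Stab{}{\sTree}\vert$ by $p$, and then observes via Proposition \ref{PropStabDec} that this stabilizer is an iterated semi-direct product of the symmetric groups permuting isomorphic branches of the corollas in the normal form, so that $p$ divides its order exactly when some corolla carries at least $p$ identical branches. Your write-up simply makes explicit the bookkeeping that the paper leaves implicit.
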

\begin{proof}
The proof follows from Proposition \ref{PropStabDec}, since $ \Stab{}{\sTree} $ is an iterated product of semi-direct products of symmetric groups representing the stabilizers of the corollas which compose the normal form of $ \sTree $.
\end{proof}

We improve this result and find a smaller collection of generators. First we fix the notation for multinomial coefficients.
\begin{ntt}\label{ntt:MultiBin}
Let $ k_{0} , \ldots , k_{r} $ be natural numbers and $ n=\sum_{i=0}^{r}k_{i} $. We define the multinomial coefficient $ (k_{0} , \ldots , k_{r}) $ to be $ \dfrac{n!}{k_{0}! \ldots k_{r}!} $.
\end{ntt}
\begin{lmm}\label{rmkCalcCorol}
Let $ V $ be a $\mathbb{K}$-module, $ x \in V $ and $B,B_{i}\in S(PreLie,V) $. The following equation holds:
\begin{gather*}
F_{p} (F_{n-p}(x,B_{1},\ldots,B_{n-p}),\underbrace{B,\ldots,B}_{p}) = F_{n} (x,\underbrace{B,\ldots,B}_{p},B_{1},\ldots,B_{n-p} )\\
+\sum_{\substack{i_{0}+\ldots +i_{v-p}=p\\ i_{0}<p}}(i_{0},\ldots ,i_{n-p}) F_{n-p+i_{0}}(x,\underbrace{B,\ldots, B}_{i_{0}},F_{i_{1}} (B_{1},\underbrace{B, \ldots, B}_{i_{1}}),\ldots , F_{i_{n-p}} (B_{n-p},\underbrace{B,\ldots, B}_{i_{n-p}}))
\end{gather*}  
in $ S(PreLie, S(PreLie(S(PreLie,V))) $.
\end{lmm}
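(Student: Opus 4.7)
The lemma is a direct application of the combinatorial definition of the operadic composition $\circ_1$ in the rooted-tree operad $RT \cong PreLie$. The plan is to unfold the outer corolla $F_p$ composed at its root with the corolla $F_{n-p}$ carrying its first input, and group the resulting sum of trees by the combinatorial profile of the attaching map.

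First, observe that in $F_p$ the only vertex with more than zero incoming edges is the root (vertex $1$), which carries $p$ incoming edges from the leaves; each of the $p$ leaves carries a unique outgoing edge. Consequently, when we compose the outer $F_p$ with the subordinate outer corollas ($F_{n-p}$ inserted at the root slot, and the unit $I$ inserted at each leaf slot where a copy of $B$ sits), the substitution of $B$ at each leaf of $F_p$ is forced and contributes no new summation, whereas the substitution at the root produces the nontrivial operadic composition $F_p \circ_1 F_{n-p} = \sum_{f} F_p \circ_1^f F_{n-p}$, the sum running over attaching maps $f \colon In(F_p,1) \to \mathrm{Vert}(F_{n-p})$.

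The $n-p+1$ vertices of $F_{n-p}$ carry the labels $x$ (at the root) and $B_1, \ldots, B_{n-p}$ (at the leaves); thus each attaching map $f$ is encoded by a profile $(i_0, i_1, \ldots, i_{n-p}) \in \mathbb{N}^{n-p+1}$ with $i_0 + \cdots + i_{n-p} = p$, where $i_0$ counts edges attached at the root and $i_j$ counts edges attached at the vertex carrying $B_j$. The number of maps with a prescribed profile is the multinomial coefficient $(i_0, i_1, \ldots, i_{n-p})$ of Notation \ref{ntt:MultiBin}. For the extreme profile $i_0 = p$, all $p$ copies of $B$ become direct children of $x$ alongside $B_1, \ldots, B_{n-p}$, producing the single corolla $F_n(x, \underbrace{B, \ldots, B}_{p}, B_1, \ldots, B_{n-p})$ with coefficient $1$. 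For $i_0 < p$, the root $x$ gains $i_0$ new direct children $B$ while each $B_j$ gains $i_j$ new direct children $B$; interpreting this tree through the decomposition $\Dec$ of Definition \ref{Def:Dec} identifies it with
\[ F_{n-p+i_0}\bigl(x, \underbrace{B, \ldots, B}_{i_0}, F_{i_1}(B_1, \underbrace{B, \ldots, B}_{i_1}), \ldots, F_{i_{n-p}}(B_{n-p}, \underbrace{B, \ldots, B}_{i_{n-p}})\bigr), \]
which is exactly the summand on the right-hand side. Summing over all profiles, weighted by the multinomial count, yields the stated identity.

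The only real subtlety is bookkeeping: the identity lives at the level of iterated $S(PreLie, -)$ where $B$ and the $B_j$ are treated as atomic inputs, so that the attaching sites for the outer $\circ_1$ are exactly the root $x$ and the roots of the branches $B_j$, with no spurious attachments inside their internal tree structures. Once this level is fixed the whole argument reduces to the multinomial enumeration of attaching maps described above.
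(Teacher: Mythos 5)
Your proof is correct and follows exactly the route the paper intends: the paper's own proof is the one-line remark that the identity is an ``immediate consequence of the definition of composition of trees,'' and your argument simply makes that explicit by enumerating the attaching maps $f\colon In(F_p,1)\to \mathrm{Vert}(F_{n-p})$ by their profile $(i_0,\ldots,i_{n-p})$ and counting each profile with the multinomial coefficient. The observation that the leaves of $F_p$ contribute no extra summation and that the branches $B_j$ are atomic at the relevant level of the iterated composite is the right bookkeeping.
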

\begin{proof}
Immediate consequence of the definition of composition of trees.
\end{proof}
For $g_{i}\in S(\mathcal{RT},(S(\mathcal{RT},\mathcal{V}))$ and $ f $ in $ \mathcal{RT}(n) $ we denote by $f(g_{1},\ldots,g_{n})$ the element in $ S(\mathcal{RT},S(\mathcal{RT},S(\mathcal{RT},\mathcal{V}))) $ representing their composite.
\begin{dfn}
Let $ V $ be a $\mathbb{K}$-module with a fixed basis $ \mathcal{V} $. A subset $G$ of $ \basis(\mathcal{RT},\basis(\mathcal{RT},\mathcal{V})) $ is said to generate $ Ker(Tr) $ if any element $ \sTree $ of $ \basis(\mathcal{RT},\mathcal{V}) $ is in $ Ker(Tr) $ if and only if it is the image by $\mu\circ \mu:S(RT,S(RT,S(RT,V)))\longrightarrow S(RT,V)$ of a linear combination of elements of the form $ f(g_{1},\ldots,g_{n}) $, where at least one $g_{i}$ is in $G$. 
\end{dfn}
\begin{lmm}\label{lmmGenKer}
Let $ V $ be a $\mathbb{K}$-module with a fixed basis $ \mathcal{V} $. The set $ K\eqdef \lbrace F_{p}(A,\underbrace{B,\ldots, B}_{p})\vert  A, B\in\basis(\mathcal{RT},\mathcal{V})   \rbrace $ generates $ Ker(Tr) $.
\end{lmm}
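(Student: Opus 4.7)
The plan is to prove both directions of the characterization separately, with the reverse implication carrying the substance.

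For the easy direction, that $\mu\circ\mu$ applied to any linear combination of elements $f(g_{1},\ldots,g_{r})$ with some $g_{i}\in K$ lands inside $Ker(Tr)$, I would use that $Tr$ is a morphism of monads. Applying the iterated naturality of $Tr$ to a 3-level element $y=f(g_{1},\ldots,g_{r})$ with one $g_{i}=F_{p}(A,\underbrace{B,\ldots,B}_{p})\in K$, the value $Tr(\mu\circ\mu(y))$ factors through the value $Tr$ takes on $g_{i}$ at the middle level. By Proposition~\ref{prpTrStab} this value equals $|\Stab{}{g_{i}}|\cdot\orb g_{i}$, and the stabilizer of $g_{i}$ contains $\SymGrp{p}$ acting on its $p$ identical $B$-slots, so $|\Stab{}{g_{i}}|$ is a multiple of $p!$, which vanishes in characteristic $p$.

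For the reverse direction, I would combine Lemma~\ref{LmmGenKer} with a direct construction. That lemma reduces the question to exhibiting, for each basis element $\sTree\in\basis(\mathcal{RT},\mathcal{V})$ whose normal form contains a sub-corolla $C=F_{n+p}(x,\underbrace{B,\ldots,B}_{p},B_{1},\ldots,B_{n})$, an expression for $\sTree$ as $\mu\circ\mu$ of an element carrying a $K$-slot. My proposed construction: starting from the normal form of $\sTree$, let $\bar F\in RT$ be the outer tree obtained by taking the nested corollas above $C$ and operadically grafting a corolla $F_{n}$ at the slot occupied by the root $x$ of $C$, thereby creating $n$ new slots to receive the $B_{1},\ldots,B_{n}$ inputs. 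Form the 3-level object $\bar T\in S(RT,S(RT,S(RT,V)))$ in which the distinguished slot at the $x$-vertex is filled with the $K$-element $F_{p}(\Root(x),B,\ldots,B)$, the $n$ grafted slots receive $\Root(B_{1}),\ldots,\Root(B_{n})$, and every other slot of $\bar F$ receives the $\Root$-wrap of the corresponding label or subtree of $\sTree$.

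The core computation is to verify $\mu\circ\mu(\bar T)=\sTree$ in characteristic $p$. Unwinding the outer $\mu$, the operadic composition at the distinguished slot amounts to choosing, for each of the $n$ edges running to the grafted $B_{i}$-slots, which of the $p+1$ vertices of the substituted $F_{p}$ it attaches to. The diagonal attachment, where all $n$ edges land on the root $x$ (vertex $1$ of $F_{p}$), reproduces $C$ at its position inside $\sTree$ and hence yields $\sTree$ after the inner $\mu$. Every other attachment map $\phi$ has a non-empty image in $\{2,\ldots,p+1\}$, and its orbit under the $\SymGrp{p}$-action on the $p$ identical $B$-leaves has size $p!/(p-k)!$ for some $k\geq 1$, a multiple of $p$ that vanishes in characteristic $p$.

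The hard part will be the bookkeeping needed to construct $\bar F$ when $C$ sits at arbitrary depth inside $\sTree$: one has to build $\bar F$ via an iterated operadic grafting guided by the $\Dec$-normal form of $\sTree$, making sure the grafting at the $x$-slot is coherent with the nesting of corollas around $C$. Once that is in place, the orbit-size computation together with the characteristic-$p$ vanishing delivers $\mu\circ\mu(\bar T)=\sTree$, exhibiting $\sTree$ as the required $\mu\circ\mu$-image of a linear combination carrying a $K$-element.
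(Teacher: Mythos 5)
Your proof is correct, but your hard direction runs along a genuinely different track from the paper's. The paper's engine is Lemma \ref{rmkCalcCorol}: it composes in the order $F_{p}(F_{v}(x,B_{1},\ldots,B_{v}),\underbrace{B,\ldots,B}_{p})$, i.e.\ it plugs the decomposition of $A$ into the root of the $K$-element, distributes the $p$ identical copies of $B$ over the $v+1$ vertices, and kills the cross terms because the multinomial coefficients $(i_{0},\ldots,i_{v})$ with $\sum i_{j}=p$ vanish mod $p$ unless concentrated on one slot; this leaves the identity $F_{p}(F_{v}(x,B_{1},\ldots,B_{v}),B,\ldots,B)=F_{v+p}(x,B,\ldots,B,B_{1},\ldots,B_{v})+\sum_{i}F_{v}(x,\ldots,F_{p}(B_{i},B,\ldots,B),\ldots)$, which is read in one direction for the ``only if'' part and inductively (on the number of vertices of $A$) for the ``if'' part. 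You compose in the opposite order: you plug the $K$-element $F_{p}(\Root(x),B,\ldots,B)$ into the root of a grafted corolla $F_{n}$ carrying $B_{1},\ldots,B_{n}$, and observe that every non-diagonal attachment map lies in a $\SymGrp{p}$-orbit of size $p!/(p-k)!$ with $k\geq 1$, hence contributes a multiple of $p$. (Your orbit count is exact: the nonempty fibres $\phi^{-1}(j)$ are disjoint, hence pairwise distinct, so the stabilizer is precisely the $\SymGrp{p-k}$ permuting the untouched leaves.) This isolates the target corolla $F_{n+p}(x,B,\ldots,B,B_{1},\ldots,B_{n})$ in a single step, with no correction terms and no induction, which is arguably cleaner for the ``only if'' direction; what you give up is the reusability of the paper's identity, which does double duty in its ``if'' part. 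Your easy direction coincides with the paper's closing sentence (the monad-morphism property of $Tr$ plus multilinearity of the tensor slots), with the useful extra precision that $Tr$ already annihilates each $K$-element because its stabilizer has order a multiple of $p!$. The bookkeeping you flag is real but benign: when building $\bar F$ you must decorate every vertex that retains an incoming edge (namely the vertices on the path from the root of $\sTree$ to $x$) by a single basis label, and push whole subtrees only into leaves of $\bar F$, so that the inner $\mu$ introduces no further attachment choices and the diagonal term returns $\sTree$ with coefficient exactly $1$.
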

\begin{proof}
We compute $ F_{p} (F_{v}(x,B_{1},\ldots,B_{v}),\underbrace{B,\ldots,B}_{p}) $. By Lemma (\ref{rmkCalcCorol}) it is equal to:
\begin{gather*}
\sum_{i_{0}+\ldots +i_{v}=p}(i_{0},\ldots ,i_{v}) F_{v+i_{0}}(x,\underbrace{B,\ldots, B}_{i_{0}},F_{i_{1}} (B_{1},\underbrace{B, \ldots, B}_{i_{1}}),\ldots , F_{i_{v}} (B_{v},\underbrace{B,\ldots, B}_{i_{v}})),
\end{gather*}
We have that $ (i_{0},\ldots ,i_{v})= \binom {p} {i_{h}} k_{i_{h}} \forall 0\leq h \leq n$ for an integer $ k_{i_{h}} $ and so this coefficient differs from $ 0 $ modulo $ p $ if and only if $ i_{k}=0 $ for all but one index $ i_{k} $. Then we get a multinomial coefficient $ (0,\ldots,p,\ldots,0) = 1 $.
Therefore we have:
\begin{multline*}
 F_{p} (F_{v}(x,B_{1},\ldots,B_{v}),\underbrace{B,\ldots,B}_{p}) = F_{v+p} (x,\underbrace{B,\ldots,B}_{p},B_{1},\ldots,B_{v})\\
+ \sum_{i\in \lbrace 1,\ldots, v\rbrace} F_{v}  (x,B_{1},\ldots,B_{i-1},F_{p}(B_{i},\underbrace{B,\ldots,B}_{p}),B_{i+1},\ldots,B_{v}).
\end{multline*}

Let $ \sTree $ be an element of $ \basis(\mathcal{RT},\mathcal{V}) $. By Lemma \ref{LmmGenKer}, $ \sTree\in Ker(Tr) $ if and only if its normal form contains a corolla of the form $ g_{i}=F_{v+p}(x,\underbrace{B,\ldots, B}_{p}, B_{1}\ldots, B_{v}) $.  We use the above computation and the multi-linearity of a tree component to express this factor $ F_{v+p}(x,\underbrace{B,\ldots, B}_{p}, B_{1}, \ldots, B_{v}) $ as difference of terms of the form $f(g_{1},\ldots, g_{n})$ where at least one $g_{i}$ is in $K$. This proves the ``only if'' part of our claim.

To check the ``if'' part of our statement we use the above formula to express a factor $F_{p}(A,\underbrace{B,\ldots, B}_{p})$ with $\Dec(A)=F_{v}(x,B_{1},\ldots,B_{v})$ as sum of terms with either a factor 
\[F_{v+p} (x,\underbrace{B,\ldots,B}_{p},B_{1},\ldots,B_{v})\in Ker(Tr)\] 
or a factor 
\[F_{p}(B_{i},\underbrace{B,\ldots,B}_{p})\] 
where $B_{i}$  has strictly less vertices than $A$. Repeating the computation inductively of the equation and using the multi-linearity of the tree components we obtain, on the right side of the equation, a sum of elements in $ Ker(Tr) $. Since $Tr$ is a morphism of monads any $ f(g_{1},\ldots,g_{n}) \in S(RT,S(RT,S(RT,V))) $ such that at least one $g_{i}$ is in $K$ is in $Ker(Tr)$.
\end{proof}

The following definition appears in the literature with the name of heap order trees, see \cite{article:ConnesKreimer98}.
\begin{dfn}
Let $ \Tree $ be a non labelled tree. A labelling of vertices of $ \Tree $ is said to be an increasing labelling if it defines a total order refinement of the partial order on vertices induced by the tree, with the root as least element. We denote the number of possible increasing labellings by $ \lambda(\Tree) $.
\end{dfn}

\begin{xmp}
Consider the following non labelled tree:
\begin{equation*}
\LFlower{}{}{}{}.
\end{equation*}
Then the following are the only three possible increasing labellings of $ \Tree $:
\begin{equation*}
\LFlower{1}{2}{3}{4},\LFlower{1}{2}{4}{3},\LFlower{1}{3}{2}{4},
\end{equation*}
and $ \lambda(\LFlower{}{}{}{})=3 $.
\end{xmp}
\begin{dfn}
We denote by $ n-ILTrees $ the set of $ n $-trees with an increasing labelling on vertices.
\end{dfn}

The following lemma is already treated in the literature using a different notation, see for example the operator $N$ in \cite{article:ConnesKreimer98},  the growth operator in \cite{article:Hoffman03}. It is used in the Butcher series for example in \cite{article:Brouder00} and in \cite{article:Livernet06}.
\begin{lmm}\label{lmmincrOrder}
In $ PreLie $ the following equation holds: 
\begin{equation*}
\varphi(\lbrace\lbrace\ldots\lbrace\lbrace \underbrace{-,- \rbrace , - \rbrace\ldots\rbrace -}_{n} \rbrace) =\sum_{\Tree\in n-ILTrees}\Tree(\underbrace{-,\ldots,-}_{n}),
\end{equation*}
where $ \varphi $ is the natural isomorphism between $ PreLie $ and $ RT $.
\end{lmm}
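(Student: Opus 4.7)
The approach is to argue by induction on $n$, transferring the computation to the operad $RT$ via the Chapoton--Livernet isomorphism $\varphi$. The base case $n=2$ is immediate: $\varphi(\{-,-\})=\Germ$, which is the unique $2$-tree carrying an increasing labelling. For the inductive step, rewrite the iterated bracket as
\[
\mathrm{iter}_{n+1}(x_{1},\ldots,x_{n+1})=\lbrace\mathrm{iter}_{n}(x_{1},\ldots,x_{n}),x_{n+1}\rbrace,
\]
which in operadic language says $\mathrm{iter}_{n+1}=\{-,-\}\circ_{1}\mathrm{iter}_{n}$. Since $\varphi$ is a morphism of operads sending the binary generator to $\Germ$, the induction hypothesis together with linearity of $\circ_{1}$ gives
\[
\varphi(\mathrm{iter}_{n+1})=\Germ\circ_{1}\sum_{\tau\in n-ILTrees}\tau=\sum_{\tau\in n-ILTrees}\Germ\circ_{1}\tau.
\]

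I would then unfold $\Germ\circ_{1}\tau$ directly from the definition of partial composition in $RT$. The root of $\Germ$ has exactly one incoming edge, coming from the leaf labelled $2$, so the attachment maps $f:In(\Germ,1)\to\{1,\ldots,n\}$ are indexed by the vertices of $\tau$. Hence
\[
\Germ\circ_{1}\tau=\sum_{v\in V(\tau)}\tau_{v},
\]
where $\tau_{v}$ denotes the $(n+1)$-tree obtained from $\tau$ by grafting a new vertex as a child of $v$; this new vertex receives the label $n+1$ after the standard shift. Substituting back yields
\[
\varphi(\mathrm{iter}_{n+1})=\sum_{\tau\in n-ILTrees}\;\sum_{v\in V(\tau)}\tau_{v}.
\]

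The heart of the proof, and the only genuinely nontrivial step, is to identify this double sum with $\sum_{\tau'\in(n+1)-ILTrees}\tau'$ via the bijection $(\tau,v)\mapsto\tau_{v}$. In one direction, if $\tau$ is increasingly labelled then so is $\tau_{v}$, because the newly inserted label $n+1$ strictly exceeds every label already present and is placed at a leaf position. In the other direction, given $\tau'\in (n+1)-ILTrees$, the vertex labelled $n+1$ must be a leaf: any child of it would be forced to carry a strictly larger label, which is impossible. Removing that leaf yields an element of $n-ILTrees$, and its former parent recovers the vertex $v$. This reversibility---which rests entirely on the observation that the maximum label of an increasing labelling sits at a leaf---closes the induction and proves the lemma.
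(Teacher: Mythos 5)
Your proposal is correct and follows essentially the same route as the paper: induction on $n$, writing the iterated bracket as the partial composite $\{-,-\}\circc\mathrm{iter}$ so that $\varphi$ turns it into $\Germ\circ_{1}(\sum_{\tau}\tau)$, and then identifying the resulting double sum with the set of increasing $(n{+}1)$-labellings via the observation that the maximal label must sit at a leaf and can be removed. Your write-up merely makes the grafting/removal bijection more explicit than the paper does; there is no substantive difference.

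\begingroup
\renewcommand{\circc}{\circ_{1}}
\endgroup
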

\begin{proof}
We proceed by induction on $ n $. If $ n $ is equal to $ 1 $ then the result is obviously true. Suppose the statement true at rank $ n-1 $. We then have:
\begin{gather*}
\varphi_{V}(\lbrace\lbrace\ldots\lbrace\lbrace y_{1},y_{2} \rbrace , y_{3}\rbrace\ldots\rbrace y_{n}\rbrace) =\\
\mu(\Germ(\sum_{\Tree\in (n-1)-ILTrees}\Tree(y_{1},\ldots , y_{n-1})), y_{n}))=\\
\sum_{\Tree\in (n-1)-ILTrees}\mu(\Germ(\Tree(y_{1},\ldots , y_{n-1})), y_{n})).
\end{gather*}
If $ \Tree $ is in $ (n-1)-ILTree $, then $ \Germ(\Tree(y_{1},\ldots , y_{n-1})), y_{n}) $ is the sum of $ n-1 $ distinct increasing labelling $ n $-trees. To any increasing labelling of an $ n $-tree is associated a labelling $ (n-1) $-tree obtained by dropping the leave labelled with $n$. We readily conclude that all the increasing labellings $ n $-tree appear once in the sum.
\end{proof}
\begin{prp}\label{prpincrOrder}
Let $ V $ be a $\mathbb{K}$-module with a fixed basis $ \mathcal{V} $ and $ x,y $ be elements of $ \mathcal{V} $. In $ S(PreLie,V) $, the following equation holds: 
\begin{equation*}
\varphi_{V}(\lbrace\lbrace\ldots\lbrace\lbrace x,\underbrace{ y \rbrace , y\rbrace\ldots\rbrace y}_{n}\rbrace) =\sum_{\Tree\in (n+1)-Trees}\lambda(\Tree)\Tree(x,y,\ldots , y),
\end{equation*}
where $ \lambda(\Tree) $ is the number of increasing labellings of $ \Tree $, and $ \varphi_{V} $ the natural isomorphism induced by the isomorphism of operads between $ PreLie $ and $ RT $.
\end{prp}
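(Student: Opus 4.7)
The plan is to derive Proposition \ref{prpincrOrder} directly from Lemma \ref{lmmincrOrder} by passing from labelled trees to non-labelled trees, exploiting the fact that all non-root inputs are equal to $y$.

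First, I would apply Lemma \ref{lmmincrOrder} to the sequence of $n+1$ inputs $(x,y,\ldots,y)$, which gives in $S(PreLie,V)$
$$\varphi_{V}(\lbrace\lbrace\ldots\lbrace x,y\rbrace,y\rbrace\ldots y\rbrace) = \sum_{\Tree'\in (n+1)\text{-}ILTrees}\Tree'(x,y,\ldots,y).$$
By definition of an increasing labelling, the root of any such $\Tree'$ carries the label $1$, so that the entry $x$ is always placed at the root and the $n$ copies of $y$ are distributed over the remaining vertices.

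Second, I would argue that in the coinvariant module $S(PreLie,V)$ the class $\Tree'(x,y,\ldots,y)$ depends only on the underlying non-labelled tree $\Tree$ obtained from $\Tree'$ by forgetting the labelling. Indeed, if $\Tree'_1$ and $\Tree'_2$ are two increasing labellings of the same non-labelled tree, then $\Tree'_2=\sigma^{\ast}\Tree'_1$ for some $\sigma\in\SymGrp{n+1}$ fixing $1$; since $\sigma$ permutes only positions carrying the same input $y$, its diagonal action sends $\Tree'_1\otimes x\otimes y\otimes\cdots\otimes y$ to $\Tree'_2\otimes x\otimes y\otimes\cdots\otimes y$, and the two elements thus coincide modulo the $\SymGrp{n+1}$-action.

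Third, I would group the summation over increasing labelled trees by their non-labelled isomorphism class. Since for each non-labelled $(n+1)$-tree $\Tree$ there are by definition exactly $\lambda(\Tree)$ increasing labellings, each contributing the same element $\Tree(x,y,\ldots,y)$ of $S(PreLie,V)$, the sum collapses to
$$\sum_{\Tree'\in (n+1)\text{-}ILTrees}\Tree'(x,y,\ldots,y)=\sum_{\Tree\in (n+1)\text{-}Trees}\lambda(\Tree)\,\Tree(x,y,\ldots,y),$$
which is the desired identity.

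I do not anticipate any genuine obstacle: the nontrivial combinatorial content is entirely contained in Lemma \ref{lmmincrOrder}, and the remaining work is a bookkeeping argument made possible by the coinvariance built into $S(PreLie,V)$. The only point requiring some care is to verify explicitly that the $\SymGrp{n+1}$-action on the labelled input $(x,y,\ldots,y)$ respects the bijection between increasing labellings of a fixed non-labelled tree and cosets of the stabilizer of $(x,y,\ldots,y)$; this is where one uses that $x$ appears only at the root position, which is automatically the least element of every increasing labelling.
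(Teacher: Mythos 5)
Your argument is correct and follows the same route as the paper, whose proof of this proposition is simply the one-line remark that it follows from Lemma \ref{lmmincrOrder}. You have filled in precisely the intended bookkeeping step: specializing the lemma to the inputs $(x,y,\ldots,y)$ and collapsing the sum over increasing labellings to a sum over non-labelled trees weighted by $\lambda(\Tree)$, using the coinvariance in $S(PreLie,V)$ and the fact that the root (carrying $x$) is fixed by any relabelling.
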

\begin{proof}
This identity follows from Lemma \ref{lmmincrOrder}.
\end{proof}
\begin{lmm}\label{prppRes}
Let $ V $ be a $\mathbb{K}$-module. Let $ x,y $ be elements of $ V $. The equation: 
\begin{equation*}
\lbrace\lbrace\ldots\lbrace\lbrace x,\underbrace{ y \rbrace ,y\rbrace\ldots\rbrace y}_{p}\rbrace = 
\lbrace x,\lbrace\ldots\lbrace\lbrace\underbrace{ y  ,y\rbrace\ldots\rbrace y}_{p}\rbrace\rbrace
\end{equation*}
holds in a $ PreLie $-algebra $ (V,\gamma) $ if and only if $ \gamma(F_{p}(x,\underbrace{y,\ldots, y}_{p}))=0 $.
\end{lmm}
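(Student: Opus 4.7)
The plan is to apply Proposition \ref{prpincrOrder} to expand both sides of the equation in $S(PreLie,V)$ via the isomorphism $\varphi:PreLie\to RT$, and then to show that in characteristic $p$ the resulting difference reduces to $F_p(x,y,\ldots,y)$; applying $\gamma$ will then yield the stated equivalence.

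Proposition \ref{prpincrOrder} directly gives
\[\varphi_V(\text{LHS})=\sum_{\tau\in (p+1)\text{-Trees}}\lambda(\tau)\,\tau(x,y,\ldots,y).\]
For the right hand side, I apply Proposition \ref{prpincrOrder} to the inner left-nested bracket (with one $y$ playing the role of $x$ and $p-1$ further $y$'s), writing it as $\sum_{\sigma\in p\text{-Trees}}\lambda(\sigma)\,\sigma(y,\ldots,y)$. The outer operation $\lbrace x,-\rbrace$ corresponds to the operadic composition $\Germ\circ_{2}(-)$, which grafts a tree at the leaf of $\Germ$ to produce a $(p+1)$-tree whose root has exactly one child. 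For such a tree $\tau$ whose root-branch is isomorphic to a $p$-tree $\sigma$, linear extensions of $\tau$ are in bijection with linear extensions of $\sigma$ (by a uniform label shift), and the two automorphism groups agree, so $\lambda(\tau)=\lambda(\sigma)$. Subtracting therefore gives
\[\varphi_V(\text{LHS})-\varphi_V(\text{RHS})=\sum_{\substack{\tau\in (p+1)\text{-Trees}\\ \text{root of }\tau\text{ has }\geq 2\text{ children}}}\lambda(\tau)\,\tau(x,y,\ldots,y).\]

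The main obstacle is a divisibility argument: for $\tau$ a $(p+1)$-tree with multi-child root and $\tau\neq F_p$, I claim $\lambda(\tau)\equiv 0\pmod p$. By the classical hook-length formula for rooted trees, $\lambda(\tau)\cdot|\text{Aut}(\tau)|=(p+1)!/\prod_v|\text{subtree}(v)|$; cancelling the factor $p+1$ contributed by the root gives
\[\lambda(\tau)=\frac{p!}{|\text{Aut}(\tau)|\cdot\prod_{v\neq\text{root}}|\text{subtree}(v)|}.\]
Since the root has at least two children whose subtree sizes sum to $p$, every non-root subtree has size at most $p-1$ and is thus coprime to $p$. The automorphism group $|\text{Aut}(\tau)|$ decomposes recursively as a product, over all vertices, of factorials of multiplicities of isomorphic child-subtree classes; at the root this multiplicity is at most $p-1$ (otherwise $\tau=F_p$), and at deeper vertices at most $p-2$, so every such factorial is at most $(p-1)!$ and coprime to $p$. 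Consequently the single factor of $p$ in $p!$ survives in $\lambda(\tau)$. A direct count yields $\lambda(F_p)=1$.

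Combining these facts, in characteristic $p$ we obtain $\varphi_V(\text{LHS})-\varphi_V(\text{RHS})=F_p(x,y,\ldots,y)$ in $S(PreLie,V)$. Applying the $PreLie$-algebra structure map $\gamma$ yields the claimed equivalence: the two sides of the $p$-restricted relation coincide in $V$ if and only if $\gamma(F_p(x,y,\ldots,y))=0$.
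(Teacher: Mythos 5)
Your proof is correct and follows essentially the same strategy as the paper: both expand the left-nested bracket via Proposition \ref{prpincrOrder} and then determine which coefficients $\lambda(\tau)$ survive modulo $p$, finding that only the single-branch trees (reassembling into the right-hand side) and the corolla $F_{p}$ remain. The only cosmetic difference is that you establish the divisibility $p\mid\lambda(\tau)$ via the hook-length formula corrected by $\vert\mathrm{Aut}(\tau)\vert$, whereas the paper reads it off from a recursive multinomial expansion $\lambda(\Tree)=\binom{p}{n_{B}m_{B}}(n_{B},\ldots,n_{B})\frac{1}{m_{B}!}\lambda(B)^{m_{B}}\lambda(S)$ over branch classes; both yield the same two exceptional cases.
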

\begin{proof}
By Proposition \ref{prpincrOrder} the left side of the equation can be expressed as a sum of trees with coefficients the number of possible increasing labellings. Let $ \Tree $ be a tree and $ B $ be a branch of $ \Tree $. That is:
\[ \Tree=F_{m_{B}+r}(\Root,\underbrace{B,\ldots, B}_{m_{B}},B_{1},\ldots, B_{r}), \]
where $B_{i}\ncong B$ for all $i\in\{1,\ldots, r\}$. We denote by the symbol $ n_{B} $ the number of vertices of $ B $ and by $S$ the tree
\[ F_{r}(\Root, B_{1},\ldots, B_{r}).\]
It is easy to check that the coefficient $ \lambda(\Tree) $ is equal to $ \binom {p}{n_{B}m_{B}} (\underbrace{n_{B},\ldots,n_{B}}_{m_{B}})\dfrac{1}{m_{B}!}\lambda(B)^{m_{B}}\lambda(S) $, where the symbol $(\underbrace{n_{B},\ldots,n_{B}}_{m_{B}})$ refers to the Notation \ref{ntt:MultiBin}.
Since $ p $ is a prime number $ p \nmid \lambda(\Tree)$ just in two cases:
\begin{enumerate}
\item $ n_{B}=p $, and $ m_{B}=1 $;
\item $ n_{B}=1 $, and $ m_{B}=p $.
\end{enumerate}
In the first case we obtain the following sum 
\begin{equation*}
\sum_{\Tree\in p-Trees}\lambda(\Tree)(x,\Tree(y,\ldots,y)).
\end{equation*}
But $ \lambda(\Tree)$ is equal to $\lambda(x,\Tree(y,\ldots,y)) $, and therefore applying $ \varphi^{-1} $ we get $  
\lbrace x,\lbrace\ldots\lbrace\lbrace\underbrace{ y  ,y\rbrace\ldots\rbrace y}_{p}\rbrace\rbrace $.
In the second case we obtain $ F_{p}(x,y,\ldots, y) $. This completes the proof.
\end{proof}
\begin{prp}\label{prp:pRestRel}
If $(V,\gamma)$ is a $ \Lambda PreLie $-algebra, then $ \lbrace -,-\rbrace:V \otimes V \longrightarrow V $ deduced from the $ PreLie $-algebra structure of $V$ is a $ p-PreLie $-algebra.
\end{prp}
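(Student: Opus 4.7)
The plan is to exploit the defining property of a $\Lambda PreLie$-algebra, namely that its structure morphism kills the kernel of the trace map, combined with the explicit generators for that kernel given in Lemma \ref{lmmGenKer}, and then to translate the resulting identity in $V$ into the $p$-restricted relation via Lemma \ref{prppRes}.

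First I would unpack the hypothesis: since $(V,\gamma)$ is a $\Lambda PreLie$-algebra, the $PreLie$-structure map $\gamma:S(PreLie,V)\longrightarrow V$ factors through the quotient $S(PreLie,V)\twoheadrightarrow \Lambda(PreLie,V)=S(PreLie,V)/Ker(Tr_{V})$, so $\gamma$ vanishes on $Ker(Tr_{V})$. Since Lemma \ref{lmmGenKer} is phrased for a free $\K$-module with a fixed basis, I would reduce to the free case: the $p$-restricted identity only involves two elements $x,y\in V$, so it suffices to verify it after pulling back to the free $\K$-module on the symbols $\{x,y\}$ and pushing forward through the unique $\K$-linear map sending these generators to $x,y\in V$.

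Next I would exhibit the relevant kernel element: taking $A=x$ and $B=y$ in the generating set $K$ of Lemma \ref{lmmGenKer}, the corolla $F_{p}(x,\underbrace{y,\ldots,y}_{p})\in S(PreLie,V)$ lies in $Ker(Tr_{V})$. As a sanity check, one can re-derive this directly from Proposition \ref{prpTrStab}: the trace of this element equals $|\Stab{}{F_{p}(x,y,\ldots,y)}|\cdot \orb F_{p}(x,y,\ldots,y)= p!\cdot \orb F_{p}(x,y,\ldots,y)$, which vanishes in characteristic $p$. Applying $\gamma$ therefore yields $\gamma(F_{p}(x,\underbrace{y,\ldots,y}_{p}))=0$ in $V$, and by Lemma \ref{prppRes} this vanishing is exactly the $p$-restricted $PreLie$ relation for the bracket $\{-,-\}$ deduced from $\gamma$.

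I do not expect any serious obstacle here: both ingredients (the trace-kernel characterization of $\Lambda PreLie$-algebras and the membership $F_{p}(x,y,\ldots,y)\in Ker(Tr)$) are already in place from the preceding section, and the chain of implications is short. The only step requiring a brief justification is the reduction to free modules, which however is routine because the relation under scrutiny is a universal polynomial identity in two arguments and can be checked after extending scalars to a free presentation.
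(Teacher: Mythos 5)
Your proposal is correct and follows essentially the same route as the paper: the paper's own proof is a one-line appeal to Lemma \ref{prppRes}, leaving implicit exactly the steps you spell out (that a $\Lambda PreLie$-structure kills $Ker(Tr_{V})$, and that $F_{p}(x,\underbrace{y,\ldots,y}_{p})$ lies in that kernel because its trace is a multiple of $|\Stab{}{F_{p}(x,y,\ldots,y)}|$, hence of $p$). Your sanity check via Proposition \ref{prpTrStab} is in fact the cleaner justification, since it applies to arbitrary $x,y\in V$ without worrying about whether they are basis elements.
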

\begin{proof}
By Lemma \ref{prppRes}  $(V,\gamma)$ satisfies the relation of $p$-restricted $PreLie$-algebra.
\end{proof}
\begin{thm}\label{thmpRes}
The construction of Proposition \ref{prp:pRestRel} gives an isomorphism between the categories of $ \Lambda PreLie $-algebras and of $ p-PreLie $-algebras.
\end{thm}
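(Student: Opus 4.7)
The plan is as follows. Proposition \ref{prp:pRestRel} already gives a functor from $\Lambda PreLie$-algebras to $p$-PreLie-algebras, obtained by pulling back along the monad map $S(PreLie,-)\twoheadrightarrow \Lambda(PreLie,-)$. I will construct a functor in the opposite direction: given a $p$-PreLie-algebra $(V,\{-,-\})$ with underlying $PreLie$-structure map $\tilde\gamma:S(PreLie,V)\longrightarrow V$, I will show that $\tilde\gamma$ factors uniquely through the epi $S(PreLie,V)\twoheadrightarrow \Lambda(PreLie,V)$ and that the resulting map $\gamma_{\Lambda}:\Lambda(PreLie,V)\rightarrow V$ equips $V$ with a $\Lambda PreLie$-algebra structure. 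Since $\Lambda(PreLie,V)=S(PreLie,V)/Ker(Tr_{V})$, the existence and uniqueness of this factorization is equivalent to showing that $\tilde\gamma$ vanishes on $Ker(Tr_{V})$.

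The core computation reduces the vanishing of $\tilde\gamma$ on $Ker(Tr_{V})$ to the $p$-restricted relation on $V$. By Lemma \ref{lmmGenKer}, every element of $Ker(Tr_{V})$ is of the form $\mu\circ\mu(f(g_{1},\ldots,g_{n}))$ for some $f\in \mathcal{RT}(n)$ and $g_{i}\in \basis(\mathcal{RT},\basis(\mathcal{RT},\mathcal{V}))$, with at least one $g_{i_{0}}$ in the generating set $K=\{F_{p}(A,\underbrace{B,\ldots,B}_{p})\mid A,B\in\basis(\mathcal{RT},\mathcal{V})\}$. Applying the monad algebra axiom $\tilde\gamma\circ\mu=\tilde\gamma\circ S(PreLie,\tilde\gamma)$ twice, one obtains
\[\tilde\gamma\bigl(\mu\circ\mu(f(g_{1},\ldots,g_{n}))\bigr)=\tilde\gamma\bigl(f(\tilde\gamma(\mu(g_{1})),\ldots,\tilde\gamma(\mu(g_{n})))\bigr).\]
For the distinguished index, $\tilde\gamma(\mu(g_{i_{0}}))=\tilde\gamma(F_{p}(\tilde\gamma(A),\underbrace{\tilde\gamma(B),\ldots,\tilde\gamma(B)}_{p}))$ by another application of the monad algebra axiom, and this vanishes by Lemma \ref{prppRes} together with the $p$-PreLie hypothesis on $V$. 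The multilinearity of the Schur functor in the $V$-slots then forces the whole expression to be zero.

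The factorization $\gamma_{\Lambda}$ is automatically a monad algebra structure: the unit axiom descends from the unit axiom of $\tilde\gamma$, and the associativity axiom follows by a diagram chase analogous to the one in the proof that $\Lambda(PreLie,-)$ is a submonad of $\Gamma(PreLie,-)$, using that $S(PreLie,-)\twoheadrightarrow\Lambda(PreLie,-)$ is a morphism of monads and that $S(PreLie,-)$ preserves epimorphisms. The two assignments are mutually inverse: starting from a $\Lambda PreLie$-algebra, the passage to its $p$-PreLie avatar preserves the underlying $PreLie$-structure, and reconstructing the factorization through $\Lambda(PreLie,V)$ recovers the original $\gamma_{\Lambda}$ by the universal property of the quotient. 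Functoriality of both constructions is immediate, since morphisms on both sides are just $PreLie$-morphisms.

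The main obstacle is the rigorous bookkeeping in the middle step: the generators of $Ker(Tr_{V})$ supplied by Lemma \ref{lmmGenKer} do not sit in $S(PreLie,V)$ itself but in the three-fold iterate $S(PreLie,S(PreLie,S(PreLie,V)))$, and must be brought down by $\mu\circ\mu$. Interchanging this two-step composition with the algebra map $\tilde\gamma$ relies essentially on iterated use of the monad algebra axiom together with the multilinearity of the Schur construction. Once this interchange is properly justified, the operadic reformulation of the $p$-restricted relation provided by Lemma \ref{prppRes} supplies exactly the vanishing needed at the innermost level, and the rest of the argument is formal.
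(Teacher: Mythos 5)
Your proof is correct and follows essentially the same route as the paper: identify $\Lambda PreLie$-algebras with $PreLie$-algebras whose structure map vanishes on $Ker(Tr)$, then use Lemma \ref{lmmGenKer} together with Lemma \ref{prppRes} to translate that vanishing into the $p$-restricted relation. You spell out the monad-algebra bookkeeping that the paper leaves implicit; the only cosmetic imprecision is that a basis element of $Ker(Tr)$ is the image under $\mu\circ\mu$ of a \emph{linear combination} of terms each having a factor in $K$, rather than of a single such term, but linearity makes your argument go through unchanged.
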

\begin{proof}
The category of $ \Lambda PreLie $-algebras is isomorphic to the subcategory of $ PreLie $-algebras $ (V,\gamma) $ such that the following diagram admits a factorization:
\[\xymatrix{
S(PreLie,V)\ar@{->>}[d]\ar[r]^-{\gamma} & V\\
\Lambda (PreLie,V)\ar@{-->}[ur] & .}
\]
This diagram admits an extension if and only if the composition 
\[ Ker(Tr)\longrightarrow S(PreLie,V)\longrightarrow V \]
is zero. By Lemma \ref{lmmGenKer} this is equivalent to say that $ (V,\gamma) $ is a $ p-PreLie $-algebra.
\end{proof}

\begin{prp}
The morphism $ \Gamma (Lie,-)\longrightarrow \Gamma (PreLie,-) $ factors through $ \Lambda (PreLie,-) $.
\end{prp}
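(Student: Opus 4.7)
The plan is to establish the factorization via the universal property of $\Gamma(Lie,V)$ as the free $p$-restricted $Lie$ algebra on $V$ (Fresse, \cite{article:Fresse00}), combined with Theorem \ref{thmpRes}, which identifies $\Lambda(PreLie,V)$ with the free $p$-restricted $PreLie$-algebra on $V$. Denoting by $\iota:Lie\longrightarrow PreLie$ the operad morphism sending the Lie bracket to $\{x,y\}-\{y,x\}$, the naturality of the trace yields the commutative square
\[\xymatrix{
S(Lie,V)\ar[r]\ar@{->>}[d]_{Tr_{Lie}} & S(PreLie,V)\ar@{->>}[d]^{Tr_{PreLie}}\\
\Gamma(Lie,V)\ar[r]_{\Gamma(\iota,V)} & \Gamma(PreLie,V),
}\]
whose right-hand vertical map surjects onto $\Lambda(PreLie,V)$. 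Hence the restriction of $\Gamma(\iota,V)$ to the submodule $\Lambda(Lie,V)=\operatorname{image}(Tr_{Lie})$ automatically factors through $\Lambda(PreLie,V)$, and in characteristic zero this already completes the proof by Remark \ref{TraceIso}. In positive characteristic we must still handle the ``new'' part of $\Gamma(Lie,V)$ generated by iterated $p$-powers $x^{[p]}$, $x^{[p^{2}]}$, etc.

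The key algebraic lemma to prove is that every $p$-restricted $PreLie$-algebra $(W,\{-,-\})$ carries a canonical structure of $p$-restricted $Lie$ algebra, with bracket $[a,b]:=\{a,b\}-\{b,a\}$ and $p$-power operation $a\mapsto a^{[p]}:=\underbrace{\{\{\ldots\{a,a\},a\},\ldots,a\}}_{p}$. The $PreLie$ axiom, in operator form, reads $[R_{a},R_{b}]=R_{[b,a]}$, expressing that $R$ is a Lie antirepresentation of $(W,[-,-])$ on $W$, while the $p$-restricted $PreLie$ relation of Definition \ref{dfn:pRest} can be rewritten as $R_{y}^{p}=L_{y^{[p]}}$. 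Expanding $\operatorname{ad}(y)^{p}=(L_{y}-R_{y})^{p}$ in $\operatorname{End}(W)$ and applying these two relations repeatedly yields the restricted $Lie$ identity $\operatorname{ad}(y)^{p}=\operatorname{ad}(y^{[p]})$; this is a Jacobson-style computation which specializes to the classical one when $\{-,-\}$ is associative.

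Granting the lemma, $\Lambda(PreLie,V)$ is a $\Gamma Lie$-algebra, and the universal property of $\Gamma(Lie,V)$ as the free such algebra produces a unique morphism $\Phi:\Gamma(Lie,V)\longrightarrow\Lambda(PreLie,V)$ of $p$-restricted $Lie$ algebras extending $\operatorname{id}_{V}$. The composite $\Gamma(Lie,V)\stackrel{\Phi}{\longrightarrow}\Lambda(PreLie,V)\hookrightarrow\Gamma(PreLie,V)$ is then a $\Gamma Lie$-algebra morphism extending the inclusion of $V$, and by applying the same uniqueness property to the $\Gamma Lie$-algebra structure on $\Gamma(PreLie,V)$ obtained through $\iota$ it coincides with $\Gamma(\iota,V)$, yielding the desired factorization. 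The main obstacle is precisely the algebraic lemma above, whose verification requires a careful Jacobson-style commutator expansion using both the $PreLie$ identity and the $p$-restricted $PreLie$ relation.
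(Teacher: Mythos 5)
Your plan shares its computational core with the paper's proof---both ultimately come down to identifying the images in $\Gamma(PreLie,-)$ of the two generating operations of $\Gamma(Lie,-)$, the bracket and the Frobenius $p$-power---but the paper's argument is far more economical. It observes that $\Gamma(Lie,V)$ is generated under the monad structure by these two operations, that their images are $\lbrace -,-\rbrace -(1,2)^{\ast}\lbrace -,-\rbrace$ and the $p$-fold iterated product $\underbrace{\lbrace\ldots\lbrace\lbrace -,-\rbrace ,-\rbrace\ldots ,-\rbrace}_{p}$, and that both of these lie in the image of the trace, hence in $\Lambda(PreLie,-)$; since $\Lambda(PreLie,-)$ is a submonad, the whole image of $\Gamma(Lie,-)$ lands there. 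No structural result about restricted $PreLie$-algebras is needed.

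Your detour through the universal property leaves two genuine gaps. (i) The lemma that every $p$-restricted $PreLie$-algebra is a $p$-restricted Lie algebra is true, but your sketch of it is not: the operator form of Definition \ref{dfn:pRest} is $R_{y}^{p}=R_{y^{[p]}}$, not $R_{y}^{p}=L_{y^{[p]}}$, and the expansion of $(L_{y}-R_{y})^{p}$ is considerably more delicate than in the associative case because $L_{y}$ and $R_{y}$ do not commute---the $PreLie$ identity only yields $[R_{a},R_{b}]=R_{[b,a]}$ and $[L_{a},R_{b}]=L_{a}L_{b}-L_{\lbrace a,b\rbrace}$---so the ``Jacobson-style computation'' is a real piece of work rather than a routine specialization. (ii) More seriously, the final uniqueness argument does not close as stated. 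The composite $\Gamma(Lie,V)\rightarrow\Lambda(PreLie,V)\hookrightarrow\Gamma(PreLie,V)$ is a morphism of restricted Lie algebras for the structure furnished by your lemma, whose Frobenius is the $p$-fold iterated $PreLie$ product, whereas $\Gamma(\iota,V)$ is a $\Gamma Lie$-morphism for the structure obtained by restriction along $\iota$, whose Frobenius is the image of the Lie-theoretic divided $p$-power. Uniqueness of morphisms out of the free restricted Lie algebra only identifies the two maps once these two structures on $\Gamma(PreLie,V)$ are known to coincide, i.e.\ once you have shown that $\iota$ carries the Frobenius of $\Gamma(Lie,-)$ to $\underbrace{\lbrace\ldots\lbrace\lbrace -,-\rbrace ,-\rbrace\ldots ,-\rbrace}_{p}$. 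That identification is precisely the computation the paper's proof consists of, so supplying it makes the universal-property machinery redundant. (A smaller slip: the vertical arrows of your square cannot be drawn as surjections onto $\Gamma(Lie,V)$ and $\Gamma(PreLie,V)$ in characteristic $p$; the failure of surjectivity of the trace is the whole point of the statement.)
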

\begin{proof}
To determine the image of the map it is enough to compute the image of a general bracket $ [-,-] $ and Frobenius power $ -^{[p]} $. They are respectively 
\[ \lbrace -,-\rbrace - (1,2)^{\ast}\lbrace -,-\rbrace \]
and 
\[ \underbrace{\lbrace\ldots\lbrace\lbrace -,-\rbrace,-\rbrace\ldots,-\rbrace}_{p}.\] 
These operations are contained in the sub-monad $ \Lambda (PreLie,-) $.
\end{proof}
\section{The $ \Gamma (PreLie,-) $ monad}
We go back to the case where $ \mathbb{K} $ is a commutative ring. We express the formula to compute the composition morphism of the monad $ \Gamma (PreLie,-) $. We use this formula to recover a normal form for the elements of $ \Gamma (PreLie,V) $.
\subsection{A formula for the $\Gamma (PreLie,-) $ composition}\label{GammaPreLieProduct}
Let $ V $ be a free $\mathbb{K}$-module, we show an explicit formula for the composition in $ \Gamma (PreLie,V) $. 

By Proposition \ref{TrStabinj} we have an explicit basis of $\Gamma(PreLie, V)$. So we compute the composition on it, and then extend by linearity. 
\begin{thm}\label{ThmProd}
Let $ V $ be a free $\mathbb{K}$-module with a fixed basis $ \mathcal{V} $. Let $\upsilon$ be an element of $\mathcal{RT}(n)$ and $\sTree_{1},\ldots ,\sTree_{n} $ be elements of $ \basis(\mathcal{RT},\mathcal{V}) $.  We assume that the composite of $\upsilon$ $\sTree_{1},\ldots ,\sTree_{n} $ in $ S(PreLie,\mathbb{Z}[\mathcal{V}]) $, where $\mathbb{Z}[\mathcal{V}]$ denotes the free $\mathbb{Z}$-module generated by $\mathcal{V}$, has the expansion:
\begin{equation*}
\mu( \upsilon( \sTree_{1},\ldots, \sTree_{n}))=\sum\limits_{\sTree\in\basis(\mathcal{RT},\mathcal{V})} \chi(\sTree)\sTree,
\end{equation*}
We then have the identity:
\begin{equation*}
\tilde{\mu}(\orb \upsilon(\orb \sTree_{1},\ldots, \orb \sTree_{n}))=\sum \dfrac{\chi(\sTree)\vert \Stab{}{\sTree}\vert}{\vert\Stab{}{\upsilon(\sTree_{1},\ldots ,\sTree_{n})}\vert \prod_{i}\vert\Stab{}{\sTree_{i}}\vert} \orb \sTree,
\end{equation*}
in $\Gamma(RT,V)$ where we consider the map $\tilde{\mu}:\Gamma(RT,\Gamma(RT,V))\rightarrow\Gamma(RT,V)$ and $\Stab{}{\upsilon(\sTree_{1},\ldots ,\sTree_{n})}$ is the stabilizer of $\upsilon(\sTree_{1},\ldots ,\sTree_{n})\in S(\mathcal{RT},S(\mathcal{RT},\mathcal{V}))$. (In the latter case we apply the definition of the stabilizer to the set $S(\mathcal{RT},\mathcal{W})$, where we take $\mathcal{W}=S(\mathcal{RT},\mathcal{V})$.)
\end{thm}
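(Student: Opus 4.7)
The plan is to exploit the fact, established earlier, that the natural transformation $Tr:S(PreLie,-)\to\Gamma(PreLie,-)$ is a morphism of monads. This gives the commutative square
\[
\xymatrix{
S(PreLie,S(PreLie,V))\ar[d]_{\mu}\ar[r]^-{Tr\circ S(PreLie,Tr)} & \Gamma(PreLie,\Gamma(PreLie,V))\ar[d]^{\tilde\mu} \\
S(PreLie,V)\ar[r]^{Tr} & \Gamma(PreLie,V)
}
\]
relating $\mu$ and $\tilde\mu$. The strategy is to evaluate both ways around this square on $\upsilon(\sTree_{1},\ldots,\sTree_{n})$, using Proposition~\ref{prpTrStab} as the only computational input, and then to compare coefficients in the $\mathbb{Z}$-basis $\gbasis(\mathcal{RT},\mathcal{V})$ supplied by Proposition~\ref{TrStabinj}.

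As a preliminary step I would reduce to the case $\K=\mathbb{Z}$ with $V=\mathbb{Z}[\mathcal{V}]$: Proposition~\ref{TrStabinj} identifies $\Gamma(PreLie,V)$ with the free $\K$-module on $\gbasis(\mathcal{RT},\mathcal{V})$ and $\tilde\mu$ is $\K$-linear, so the general statement will follow from the integer version by base change.

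The lower path of the square sends $\upsilon(\sTree_{1},\ldots,\sTree_{n})$ to $\sum\chi(\sTree)\sTree$ by hypothesis and then to $\sum\chi(\sTree)|\Stab{}{\sTree}|\orb\sTree$ by Proposition~\ref{prpTrStab}. Along the upper path, first $S(PreLie,Tr)$ replaces each $\sTree_{i}$ by $|\Stab{}{\sTree_{i}}|\orb\sTree_{i}$, yielding $\prod_{i}|\Stab{}{\sTree_{i}}|\cdot\upsilon(\orb\sTree_{1},\ldots,\orb\sTree_{n})$; a second application of $Tr$ then produces $\prod_{i}|\Stab{}{\sTree_{i}}|\cdot|\Stab{}{\upsilon(\sTree_{1},\ldots,\sTree_{n})}|\cdot\orb\upsilon(\orb\sTree_{1},\ldots,\orb\sTree_{n})$, invoking Proposition~\ref{prpTrStab} a second time on the ``outer'' tensor. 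Here I would use that the set-bijection $\orb:\basis(\mathcal{RT},\mathcal{V})\to\gbasis(\mathcal{RT},\mathcal{V})$ matches the coincidences $\sTree_{i}=\sTree_{j}$ with the coincidences $\orb\sTree_{i}=\orb\sTree_{j}$, so that the stabilizer of $\upsilon\langle\orb\sTree_{1},\ldots,\orb\sTree_{n}\rangle$ under the $\SymGrp{n}$-action coincides with $\Stab{}{\upsilon(\sTree_{1},\ldots,\sTree_{n})}$. Applying $\tilde\mu$ and equating with the lower-path output gives
\[
\prod_{i}|\Stab{}{\sTree_{i}}|\cdot|\Stab{}{\upsilon(\sTree_{1},\ldots,\sTree_{n})}|\cdot\tilde\mu\bigl(\orb\upsilon(\orb\sTree_{1},\ldots,\orb\sTree_{n})\bigr)=\sum\chi(\sTree)|\Stab{}{\sTree}|\orb\sTree,
\]
and the stated formula drops out by solving for $\tilde\mu(\orb\upsilon(\orb\sTree_{1},\ldots,\orb\sTree_{n}))$ in the free $\mathbb{Z}$-module on $\gbasis(\mathcal{RT},\mathcal{V})$.

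The main obstacle is the divisibility implicit in the formula: for each basis element $\orb\sTree$ the integer $\prod_{i}|\Stab{}{\sTree_{i}}|\cdot|\Stab{}{\upsilon(\sTree_{1},\ldots,\sTree_{n})}|$ has to divide $\chi(\sTree)|\Stab{}{\sTree}|$. This is not visible from the expansion of $\chi(\sTree)$ on its own, but it is forced by the argument, since the left-hand side of the displayed equation is an element of $\mathbb{Z}[\gbasis(\mathcal{RT},\mathcal{V})]$ by construction. The other subtle point is the identification $\Stab{}{\upsilon(\orb\sTree_{1},\ldots,\orb\sTree_{n})}=\Stab{}{\upsilon(\sTree_{1},\ldots,\sTree_{n})}$, which must be checked as an equality of subgroups (and not merely as an inclusion) using the injectivity of $\orb$ on basis elements.
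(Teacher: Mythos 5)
Your proposal is correct and follows essentially the same route as the paper: both evaluate the two ways around the square expressing that $Tr$ is a monad morphism on $\upsilon(\sTree_{1},\ldots,\sTree_{n})$, apply Proposition~\ref{prpTrStab} twice (once to the inner factors, once to the outer tensor over the basis $\gbasis(\mathcal{RT},\mathcal{V})$ of $\Gamma(PreLie,V)$), and then extract the coefficients. The only cosmetic difference is that the paper first inverts $Tr$ over $\mathbb{Q}$ and restricts back along the monomorphism $\Gamma(PreLie_{\mathbb{Z}},\mathbb{Z}[\mathcal{V}])\hookrightarrow\Gamma(PreLie_{\mathbb{Q}},\mathbb{Q}[\mathcal{V}])$, whereas you divide directly in the torsion-free $\mathbb{Z}$-module $\mathbb{Z}[\gbasis(\mathcal{RT},\mathcal{V})]$, and your final base-change step to a general ring $\K$ is exactly the paper's preliminary commutative diagram.
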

\begin{proof}
We start with a preliminary step. We use the notation $PreLie_{\K}$ to distinguish the coefficient in which is defined the operad $PreLie$. We consider the following morphism
\[\Gamma(PreLie_{\mathbb{Z}}, \mathbb{Z}[\mathcal{V}])\rightarrow \Gamma(PreLie_{\K}, \K[\mathcal{V}]),\]
induced by the canonical morphism $i_{PreLie}: PreLie_{\mathbb{Z}}\rightarrow PreLie_{\K}$ and $i_{V}:\mathbb{Z}[\mathcal{V}]\rightarrow \K[\mathcal{V}]$. We have the following commutative diagram:
\[
\xymatrix{
\Gamma(PreLie_{\mathbb{Z}},\Gamma(PreLie_{\mathbb{Z}},\mathbb{Z}[\mathcal{V}]))\ar[d]_{\cong}\ar[r] & \Gamma(PreLie_{\K},\Gamma(PreLie_{\K},\K[\mathcal{V}]))\ar[d]^{\cong}\\
\Gamma(PreLie_{\mathbb{Z}} \utilde{\Box} PreLie_{\mathbb{Z}},\mathbb{Z}[\mathcal{V}]))\ar[dd]_{\tilde{\mu}_{\mathbb{Z}}}\ar[r] & \Gamma(PreLie_{\K} \utilde{\Box} PreLie_{\K},\K[\mathcal{V}]))\ar[d]^{\cong}\\
& \Gamma( (PreLie \utilde{\Box} PreLie)_{\K},\K[\mathcal{V}]))\ar[d]^{\tilde{\mu}_{\K}}\\
\Gamma(PreLie_{\mathbb{Z}},\mathbb{Z}[\mathcal{V}])\ar[r] & \Gamma(PreLie_{\K},\K[\mathcal{V}])).}
\]

We assume $\K=\mathbb{Q}$ first and we check the relation in this case. We use the fact that $\orb \sTree = \dfrac{Tr(\sTree)}{\vert Stab(\sTree)\vert}$ and that $Tr:S(PreLie_{\mathbb{Q}},-)\rightarrow \Gamma(PreLie_{\mathbb{Q}},-)$ is an isomorphism of monads to get the identity:
\begin{align*}
{\mu}(Tr(\upsilon (Tr (\sTree_{1}),\ldots, Tr(\sTree_{n}))))&= Tr(\mu(\upsilon(\sTree_{1},\ldots, \sTree_{n})))\\
& = \sum\limits_{\sTree\in\basis(\mathcal{RT},\mathcal{V})} \chi(\sTree) Tr(\sTree)\\
& = \sum\limits_{\sTree\in\basis(\mathcal{RT},\mathcal{V})} \chi(\sTree) \vert Stab(\sTree) \vert \orb \sTree,
\end{align*}
and
\begin{align*}
{\mu}(Tr(\upsilon (Tr (\sTree_{1}),\ldots, Tr(\sTree_{n}))))&= \vert Stab(\upsilon(\sTree_{1},\ldots, \sTree_{n}) )\vert \prod_{i}\vert\Stab{}{\sTree_{i}}\vert \mu( \orb \upsilon(\orb \sTree_{1},\ldots, \orb \sTree_{n}))).\\
\end{align*}

We now consider the case $\K=\mathbb{Z}$. We have a monomorphism 
\[\Gamma(PreLie_{\mathbb{Z}}, \mathbb{Z}[\mathcal{V}])\hookrightarrow \Gamma(PreLie_{\mathbb{Q}}, \mathbb{Q}[\mathcal{V}])\] 
which respects the composition product. Thus the coefficients computed for the basis of the $\mathbb{Q}$-module $\Gamma(PreLie_{\mathbb{Q}}, \mathbb{Q}[\mathcal{V}])$ correspond to the coefficients for the basis of $\Gamma(PreLie_{\mathbb{Z}}, \mathbb{Z}[\mathcal{V}])$.

We consider the general case. The canonical morphism 
\[\Gamma(PreLie_{\mathbb{Z}},\mathbb{Z}[\mathcal{V}])\rightarrow \Gamma(PreLie_{\K},\K[\mathcal{V}])\] 
carries the relation, which is verified over $\mathbb{Z}$ for our basis elements, to the same relation over $\K$.
\end{proof}
\subsection{Decompositions in corollas and normal form}
Let $ V $ be a free $\mathbb{K}$-module with a fixed basis $ \mathcal{V} $. Let $ t $ be an element of $ \Gamma (PreLie,V) $; recall that by Proposition \ref{TrStabinj}, $ t $ is a linear combination of elements in $ \gbasis(\mathcal{RT},\mathcal{V}) $.

We present how to construct the elements of $ \Gamma (PreLie,V) $ from corollas.
\begin{lmm}\label{lmmgrowtrees}
Let $ V $ be a free $\mathbb{K}$-module with a fixed basis $ \mathcal{V} $. Let $ x $ be an element of $ \mathcal{V} $, and $ \sTree_{1},\ldots,\sTree_{r} $ be elements of $ \basis(\mathcal{RT},\mathcal{V}) $. Then 
\[ \tilde{\mu}(\orb F_{r}\lbrace x,\orb \sTree_{1},\ldots,\orb \sTree_{r}\rbrace) = \orb(\mu(F_{r}(x,\sTree_{1},\ldots, \sTree_{r}))). \]
\end{lmm}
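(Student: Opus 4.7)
The plan is to derive the identity as a direct application of Theorem \ref{ThmProd}, using Proposition \ref{PropStabDec} to simplify the coefficient.

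First I would analyze the operadic composite $\mu(F_{r}(x,\sTree_{1},\ldots,\sTree_{r}))$ in $S(PreLie,\mathbb{Z}[\mathcal{V}])$. Since the root vertex of $F_{r}$ has no outgoing edge and its label $x$ is a single basis element (a one-vertex tree), while each of the leaves of $F_{r}$ has no incoming edge, the operadic substitution of trees is unambiguous: no summation over attaching maps is triggered. Hence the composite collapses to a single element $\sTree^{\ast}\in \basis(\mathcal{RT},\mathcal{V})$, namely the tree whose root is labelled by $x$ and whose branches are the trees $\sTree_{1},\ldots,\sTree_{r}$. In the notation of Theorem \ref{ThmProd} we have $\chi(\sTree^{\ast})=1$ and $\chi(\sTree)=0$ for every other tree, so the sum on the right hand side of Theorem \ref{ThmProd} reduces to a single term.

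Next I would apply Theorem \ref{ThmProd} to obtain
\[
\tilde{\mu}(\orb F_{r}(\orb x,\orb \sTree_{1},\ldots,\orb \sTree_{r})) \;=\;
\dfrac{\vert \Stab{}{\sTree^{\ast}}\vert}{\vert\Stab{}{F_{r}(x,\sTree_{1},\ldots,\sTree_{r})}\vert\,\prod_{i}\vert\Stab{}{\sTree_{i}}\vert}\,\orb \sTree^{\ast},
\]
where I use the remark that $\orb x = x$ for a single-vertex labelling. The left hand side of our target is then exactly this expression, while the right hand side is $\orb\sTree^{\ast}$; it therefore suffices to show the coefficient equals $1$.

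For this last step I would invoke Proposition \ref{PropStabDec}. By construction, the top corolla decomposition of $\sTree^{\ast}$ is precisely $\Dec(\sTree^{\ast})=F_{r}(x,\sTree_{1},\ldots,\sTree_{r})$, and its branches are $B_{i}=\sTree_{i}$. The proposition then yields the isomorphism
\[
\Stab{}{\sTree^{\ast}}\;\cong\;\Stab{}{F_{r}(x,\sTree_{1},\ldots,\sTree_{r})}\ltimes \bigl(\Stab{}{\sTree_{1}}\times\cdots\times \Stab{}{\sTree_{r}}\bigr),
\]
so in particular $\vert \Stab{}{\sTree^{\ast}}\vert = \vert\Stab{}{F_{r}(x,\sTree_{1},\ldots,\sTree_{r})}\vert\,\prod_{i}\vert\Stab{}{\sTree_{i}}\vert$. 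The coefficient collapses to $1$ and the identity follows. The only delicate point is the identification of $\mu(F_{r}(x,\sTree_{1},\ldots,\sTree_{r}))$ with a single tree rather than a formal sum, but this is immediate from the corolla shape of $F_{r}$ together with the fact that leaves have no incoming edges and that $x$ is a one-vertex tree; no further obstacle should arise.
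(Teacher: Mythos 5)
Your proof is correct and follows essentially the same route as the paper, which simply asserts that "the only thing to check is that the coefficient which appears in the left terms is one." You supply exactly the intended verification: the composite collapses to the single tree $\sTree^{\ast}$ with $\Dec(\sTree^{\ast})=F_{r}(x,\sTree_{1},\ldots,\sTree_{r})$, and Proposition \ref{PropStabDec} makes the coefficient from Theorem \ref{ThmProd} equal to $1$.
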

\begin{proof}
The only thing to check is that the coefficient which appears in the left terms is one.
\end{proof}
\begin{lmm}\label{lmmDeczero}
Let $ V $ be a free $\mathbb{K}$-module with a fixed basis $ \mathcal{V} $. If $ \sTree $ is an element of $ \basis(\mathcal{RT},\mathcal{V}) $, then $ \orb \sTree $ is equal to $ \tilde{\mu}(\orb F_{r}(x_{a},\orb B_{1},\ldots,\orb B_{r})) $ where $  F_{r}[x_{a},B_{1},\ldots,B_{r}]  $ is $ Dec(\sTree) $.
\end{lmm}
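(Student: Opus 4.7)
The plan is to derive this statement as an immediate corollary of Lemma \ref{lmmgrowtrees} together with the remark following Definition \ref{Def:Dec}, which records the compatibility $\mu(\Dec(\sTree))=\sTree$ between decomposition into corollas and the operadic composition $\mu$.

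First I would unpack what is to be computed. By hypothesis $\Dec(\sTree) = F_{r}(x_{a},B_{1},\ldots,B_{r})$ where $x_{a}\in\mathcal{V}$ is the root label of $\sTree$ and $B_{1},\ldots,B_{r}\in \basis(\mathcal{RT},\mathcal{V})$ are the branches of $\sTree$. The element under consideration is then $\orb F_{r}(x_{a},\orb B_{1},\ldots,\orb B_{r})\in \Gamma(PreLie,\Gamma(PreLie,V))$, and we wish to identify its image under the monad product $\tilde{\mu}:\Gamma(PreLie,\Gamma(PreLie,V))\to\Gamma(PreLie,V)$.

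The main step is a direct invocation of Lemma \ref{lmmgrowtrees}, applied with the choice of element $x=x_{a}$ and branches $\sTree_{i}=B_{i}$. This yields
\[
\tilde{\mu}\bigl(\orb F_{r}(x_{a},\orb B_{1},\ldots,\orb B_{r})\bigr) \;=\; \orb\bigl(\mu(F_{r}(x_{a},B_{1},\ldots,B_{r}))\bigr).
\]
I would then substitute $F_{r}(x_{a},B_{1},\ldots,B_{r})=\Dec(\sTree)$ and apply the remark that $\mu\circ\Dec$ is the identity on basis elements of $S(RT,V)$, which gives $\mu(\Dec(\sTree))=\sTree$. Composing, we obtain $\tilde{\mu}(\orb F_{r}(x_{a},\orb B_{1},\ldots,\orb B_{r}))=\orb\sTree$, as required.

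There is essentially no obstacle here: the content of the statement has been packaged into Lemma \ref{lmmgrowtrees} and the $\mu\circ\Dec=\mathrm{id}$ remark. The only subtlety to flag is that Lemma \ref{lmmgrowtrees} was stated for $\sTree_{i}\in \basis(\mathcal{RT},\mathcal{V})$, and the branches $B_{i}$ produced by $\Dec$ indeed lie in this set, so no verification beyond checking shapes is needed.
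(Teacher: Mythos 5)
Your proposal matches the paper's own argument, which simply applies Lemma \ref{lmmgrowtrees} to $\Dec(\sTree)$ and implicitly uses the identity $\mu(\Dec(\sTree))=\sTree$; you have just spelled out these two steps explicitly. No further comment is needed.
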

\begin{proof}
We apply Lemma \ref{lmmgrowtrees} to $ Dec(\sTree) $.
\end{proof}
\begin{dfn}
Let $ V $ be a free $\mathbb{K}$-module with a fixed basis $ \mathcal{V} $, and $ \tTree $ be an element of $ \basis(\mathcal{RT},\mathcal{V}) $. We call normal form of $ \gTree $ its expression in iterated composition of elements of the form $ \orb(F_{r}(x,-,\ldots,-)) $ with $ x $ an element of $ \mathcal{V} $ deduced from the normal form of $ \tTree $.
\end{dfn}
\begin{prp}
Let $ V $ be a free $\mathbb{K}$-module with a fixed basis $ \mathcal{V} $. If $ \sTree $ is an element of $ \basis(\mathcal{RT},\mathcal{V}) $ then $ \orb \sTree $ admits a unique normal form.
\end{prp}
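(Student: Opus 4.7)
The plan is to prove existence and uniqueness separately, obtaining existence by an inductive expansion via $\Dec$ and uniqueness by comparing with the normal form in $S(PreLie,V)$, whose ambiguity is exactly killed by $\orb$.

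For existence, I would argue by induction on the number of vertices of $\sTree$. If $\sTree = x\in \mathcal{V}$ is a single vertex (corresponding to the unit tree $\Root$), then $\orb \sTree = x$ is already in normal form. Otherwise, $\Dec(\sTree) = F_{r}(x_{a},B_{1},\ldots,B_{r})$ as in Definition \ref{Def:Dec}, where the $B_{i}$ have strictly fewer vertices than $\sTree$. Applying the induction hypothesis, each $\orb B_{i}$ admits a normal form as an iterated composition of elements $\orb(F_{s}(y,-,\ldots,-))$. Then Lemma \ref{lmmDeczero} gives
\[ \orb \sTree = \tilde{\mu}\bigl(\orb F_{r}(x_{a},\orb B_{1},\ldots,\orb B_{r})\bigr), \]
and substituting the normal forms of the $\orb B_{i}$ yields the desired expression for $\orb \sTree$.

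For uniqueness, I would exploit the fact, already recorded in the paper, that $\sTree \in \basis(\mathcal{RT},\mathcal{V})$ admits a normal form in $S(PreLie,V)$ which is unique up to permutations of the non-root inputs of the corollas appearing in the decomposition. Each such permutation corresponds to the action of an element of the stabilizer $\Stab{}{\Dec(\sTree)}$ permuting isomorphic branches, as described in Proposition \ref{PropStabDec}. Since $\orb$ sends any element of $\basis(\mathcal{RT},\mathcal{V})$ to the symmetric sum over its $\SymGrp{n}$-orbit, the element $\orb F_{r}(x,\orb B_{1},\ldots,\orb B_{r}) \in \Gamma(PreLie,\Gamma(PreLie,V))$ is invariant under permutations of the inputs $\orb B_{1},\ldots,\orb B_{r}$; more generally, any two $S$-normal forms of $\sTree$ related by permutations of non-root entries of corollas produce the same iterated composition of elements of the form $\orb F_{r}(x,-,\ldots,-)$. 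Hence the $S$-ambiguity is fully absorbed.

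Finally, to conclude that distinct $\sTree$'s cannot yield equal normal forms, I would invoke Proposition \ref{TrStabinj}: the set $\gbasis(\mathcal{RT},\mathcal{V})$ is a basis of $\Gamma(PreLie,V)$, and the assignment $\sTree \mapsto \orb \sTree$ is a bijection between $\basis(\mathcal{RT},\mathcal{V})$ and $\gbasis(\mathcal{RT},\mathcal{V})$. So distinct elements of $\basis(\mathcal{RT},\mathcal{V})$ produce distinct elements of $\Gamma(PreLie,V)$, and the uniqueness of the $S$-normal form up to the symmetries of the corollas is transported to a genuine uniqueness statement after applying $\orb$. The only real obstacle is the bookkeeping in the second paragraph above, namely checking that permuting non-root entries of a corolla in the $S$-normal form of $\sTree$ really matches the symmetry of $\orb F_{r}(x,\orb B_{1},\ldots,\orb B_{r})$ in its $\orb B_{i}$ arguments; this is a direct consequence of Proposition \ref{PropStabDec} combined with Lemma \ref{lmmgrowtrees}.
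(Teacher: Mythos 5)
Your proof is correct and follows essentially the same route as the paper, which simply applies Lemma \ref{lmmDeczero} recursively to obtain a bijection between normal forms in $S(RT,V)$ and in $\Gamma(RT,V)$. Your write-up merely makes explicit the existence/uniqueness bookkeeping (via Proposition \ref{PropStabDec} and Proposition \ref{TrStabinj}) that the paper's one-line argument leaves implicit.
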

\begin{proof}
We apply Lemma \ref{lmmDeczero} recursively to get a bijection between the normal form in $ S(RT,V) $ and the normal form of $ \Gamma (RT,V) $.
\end{proof}
\begin{prp}
The set of monomials in normal form gives a basis of the $\mathbb{K}$-module $ \Gamma (RT,V) $.
\end{prp}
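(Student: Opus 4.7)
The plan is to combine the basis result of Proposition \ref{TrStabinj} with the uniqueness of the normal form established just above. Concretely, I would argue that the map $\sTree \mapsto \text{normal form of } \orb\sTree$ induces a bijection between the indexing sets of the two candidate bases of $\Gamma(RT,V)$, and then transport the basis property across this bijection.

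First, I would recall that by Proposition \ref{TrStabinj} the set $\gbasis(\mathcal{RT},\mathcal{V}) = \{\orb \sTree \mid \sTree \in \basis(\mathcal{RT},\mathcal{V})\}$ already forms a $\K$-basis of $\Gamma(RT,V)$. The content of the current proposition is therefore to show that rewriting each $\orb\sTree$ in terms of iterated $\tilde\mu$-compositions of corolla-elements $\orb F_r(x,-,\ldots,-)$ with $x \in \mathcal{V}$ yields another basis. The previous proposition, via a recursive application of Lemma \ref{lmmDeczero}, produces for every $\sTree \in \basis(\mathcal{RT},\mathcal{V})$ a unique such expression, and this expression equals $\orb\sTree$ in $\Gamma(RT,V)$ by Lemma \ref{lmmgrowtrees}. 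Hence the normal form assignment defines a surjective map from the set of monomials in normal form (as formal iterated compositions of corolla generators) onto $\gbasis(\mathcal{RT},\mathcal{V})$.

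Next, I would argue injectivity of this correspondence: since the normal form of $\sTree \in \basis(\mathcal{RT},\mathcal{V})$ is unique up to the permutations of the non-root entries of its constitutive corollas, and since the corolla-elements $\orb F_{r}(x,-,\ldots,-)$ are themselves invariant under such permutations (each $\orb F_r$ is defined as a symmetrization over its non-root inputs), two distinct basis elements of $\basis(\mathcal{RT},\mathcal{V})$ produce two distinct normal-form monomials in the free iterated-composition expression. Equivalently, the decomposition procedure $\Dec$ is inverse to $\tilde\mu$ on corolla generators with root in $\mathcal{V}$, so I recover $\sTree$ (and hence $\orb\sTree$) from its normal form monomial.

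Combining these two steps gives a bijection between monomials in normal form and the basis $\gbasis(\mathcal{RT},\mathcal{V})$ of $\Gamma(RT,V)$. Since a set in bijection with a basis and whose elements equal (under the bijection) the corresponding basis elements is itself a basis, the proposition follows. The main subtlety I anticipate is the injectivity step: one must be careful that the identification of normal forms up to permutation of corolla inputs matches exactly the identification built into the $\orb F_{r}$ symmetrization, so that distinct reduced normal forms really do give distinct elements of $\Gamma(RT,V)$; this is exactly what uniqueness of the normal form in the previous proposition guarantees, and where I would invoke it most carefully.
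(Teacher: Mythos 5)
Your argument is correct and follows essentially the same route as the paper: both proofs reduce the statement to the bijection between normal-form monomials and the set $\basis(\mathcal{RT},\mathcal{V})$, combined with Proposition \ref{TrStabinj} identifying $\gbasis(\mathcal{RT},\mathcal{V})$ as a basis of $\Gamma(RT,V)$. You merely spell out more explicitly (via Lemmas \ref{lmmgrowtrees} and \ref{lmmDeczero} and the uniqueness of the normal form) the bijection that the paper dismisses as ``easy to prove,'' which is a reasonable elaboration rather than a different method.
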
 
\begin{proof}
It is easy to prove that the set of monomials in normal forms is in bijection with the set $ \basis(\mathcal{RT},\mathcal{V}) $. By Proposition \ref{TrStabinj} the set  $\gbasis(\mathcal{RT},\mathcal{V})$ forms a basis for $ \Gamma (RT,V) $ and it is in bijection with $ \basis(\mathcal{RT},\mathcal{V}) $.
\end{proof}
\subsection{A presentation for $ \Gamma (PreLie,-) $}
To describe the structure of $ \Gamma PreLie $-algebras we first show how to construct some polynomial abstract operations from a tree. We define a new type of algebras, $ Cor $-algebras, using just the abstract operations defined by corollas. We conclude the section by proving that $ Cor $-algebras coincide with $ \Gamma PreLie $-algebras.

Let $ (V,\gamma) $ be a $ \Gamma PreLie $-algebra and $ E_{n} $ be the free $\mathbb{K}$-module generated by a set of variables $ \mathcal{E}_{n}=\lbrace e_{i} \rbrace_{i\in \lbrace 1,\ldots,n\rbrace} $. We consider an element of $ \Gamma (RT,E_{n}) $. It can be written as a linear combination of elements of the form:
\[ \orb(\rho( \underbrace{e_{1}\otimes\ldots\otimes e_{1}}_{r_{1}}\otimes\ldots\otimes\underbrace{e_{n}\otimes\ldots\otimes e_{n}}_{r_{n}})), \] 
for some $ \rho\in \mathcal{RT} $. 

Let $ v_{1},\ldots,v_{n} $ be elements in $ V $, we define the morphism $ \psi_{v_{1},\ldots,v_{n}}:E_{n}\longrightarrow V $ by linear extension of $ \psi_{v_{1},\ldots,v_{n}}(e_{i})=v_{i} $. By functoriality it induces a morphism $ \psi_{v_{1},\ldots,v_{n}}:\Gamma(RT,E_{n})\longrightarrow\Gamma(RT,V) $.
\begin{dfn}
Any element $ \alpha $ of $ \Gamma(\mathcal{RT},\mathcal{E}_{n}) $ is of the form 
\[ \alpha=\orb(\rho(\underbrace{e_{1}\otimes\ldots\otimes e_{1}}_{r_{1}}\otimes\ldots\otimes\underbrace{e_{n}\otimes\ldots\otimes e_{n}}_{r_{n}})) \]
and induces a function 
\[ \varphi_{\underbrace{e_{1}\otimes\ldots\otimes e_{1}}_{r_{1}}\otimes\ldots\otimes\underbrace{e_{n}\otimes\ldots\otimes e_{n}}_{r_{n}}} :V^{\times n}\longrightarrow V \]
defined by:
\[ \varphi_{\underbrace{e_{1}\otimes\ldots\otimes e_{1}}_{r_{1}}\otimes\ldots\otimes\underbrace{e_{n}\otimes\ldots\otimes e_{n}}_{r_{n}}} (v_{1},\ldots,v_{n})=\gamma(\psi_{v_{1},\ldots,v_{n}}(\alpha)). \] 
The elements $ e_{i} $ have the role of abstract variables.
We denote the set of these functions by $ AbsOp_{n} $ and we set $ AbsOp=\coprod_{n\in \mathbb{N}}AbsOp_{n} $.
\end{dfn} 
\begin{dfn}
The group $ \SymGrp{n} $ acts on the set $ AbsOp_{n} $ by permutation of the indices $ \lbrace 1,\ldots, n\rbrace $. Let $ \sigma $ be an element of $\SymGrp{n} $. Let $ \varphi\in AbsOp $ be the element associated to
\[ \alpha=\orb(\rho(\underbrace{e_{1}\otimes\ldots\otimes e_{1}}_{r_{1}}\otimes\ldots\otimes\underbrace{e_{n}\otimes\ldots\otimes e_{n}}_{r_{n}})) \]
we define: 
\[ (\sigma_{\ast}\varphi)_{\underbrace{e_{1}\otimes\ldots\otimes e_{1}}_{r_{1}}\otimes\ldots\otimes\underbrace{e_{n}\otimes\ldots\otimes e_{n}}_{r_{n}}}\]
to be the element associated to
\[ \orb(\rho(\underbrace{e_{\sigma(1)}\otimes\ldots\otimes e_{\sigma(1)}}_{r_{1}}\otimes\ldots\otimes\underbrace{e_{\sigma(n)}\otimes\ldots\otimes e_{\sigma(n)}}_{r_{n}})) \]

From now on, we denote $ \varphi_{\underbrace{e_{1}\otimes\ldots\otimes e_{1}}_{r_{1}}\otimes\ldots\otimes\underbrace{e_{n}\otimes\ldots\otimes e_{n}}_{r_{n}}} $ by $ \varphi_{r_{1},\ldots , r_{n}} $ or  $ \varphi $.
\end{dfn}
\begin{prp}\label{prp:RelAn}
The following equations hold:
\begin{align}\label{Rel1}
\sigma_{\ast}\varphi(v_{1},\ldots,v_{n})=\varphi(v_{\sigma(1)},\ldots,v_{\sigma(n)}),
\end{align}
where $ \sigma\in \SymGrp{n} $,
\begin{align}
\varphi_{r_{1},\ldots,r_{i-1},0,r_{i+1},\ldots,r_{n}}(v_{1},\ldots, v_{n})=\varphi_{r_{1},\ldots,r_{i-1},r_{i+1},\ldots,r_{n}}(v_{1},\ldots,v_{i-1},v_{i+1}\ldots, v_{n}).\label{Rel2}
\end{align}
\begin{align}\label{Rel3}
\varphi_{r_{1},\ldots,r_{i}\ldots,r_{n}}(v_{1},\ldots,\lambda v_{i},\ldots, v_{n})=\lambda^{r_{i}}(\varphi_{r_{1},\ldots,r_{i},\ldots,r_{n}}(v_{1},\ldots, v_{i},\ldots, v_{n})).
\end{align}

If the function
\[ \varphi_{r_{1},\ldots,r_{i},r_{i+1},\ldots,r_{n}} \]
is commutative in the variables $ i $ and $ i+1 $ i.e. $ (i,i+1)^{\ast}\varphi=\varphi $, and $ v_{i}=v_{i+1} $, then
\begin{multline}\label{Rel4}
\varphi_{r_{1},\ldots,r_{i},r_{i+1},\ldots,r_{n}}(v_{1},\ldots,v_{i},v_{i+1},\ldots, v_{n})=\\
\binom{r_{i}+r_{i+1}}{r_{i}} \varphi_{r_{1},\ldots,r_{i-1}, r_{i}+r_{i+1},r_{i+2},\ldots,r_{n}}(v_{1},\ldots,v_{i-1}, v_{i},v_{i+2},\ldots, v_{n}).
\end{multline}

We have
\begin{gather}\label{Rel5}
\varphi_{r_{1},\ldots,r_{i}\ldots ,r_{n}}(v_{1},\ldots,\underset{i}{a+b},\ldots, v_{n})=\sum_{s=0}^{r_{i}} \varphi_{r_{1},\ldots, s,r_{i}-s,\ldots,r_{n}}(v_{1},\ldots, a,b,\ldots, v_{n}).
\end{gather}
where $v_{i}=a+b$
\end{prp}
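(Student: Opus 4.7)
The plan is to prove each of the five equations by unpacking the definition
\[\varphi_{r_{1},\ldots,r_{n}}(v_{1},\ldots,v_{n})=\gamma\circ\psi_{v_{1},\ldots,v_{n}}(\orb(\rho(e_{1}^{\otimes r_{1}}\otimes\ldots\otimes e_{n}^{\otimes r_{n}})))\]
and tracking how the relevant substitution acts on the underlying tensor, on the orbit sum $\orb$, and finally on the structure map $\gamma$. The $\K$-linearity of $\psi_{v_{1},\ldots,v_{n}}$ and $\gamma$, combined with the $\SymGrp{}$-equivariance of $\psi$, reduces each identity to a combinatorial statement at the level of the underlying $\SymGrp{}$-invariant tensor.

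Relations (\ref{Rel1}) and (\ref{Rel2}) are essentially formal. For (\ref{Rel1}), the two sides both evaluate to the same element of $\Gamma(RT,V)$, because relabelling $e_{i}$ as $e_{\sigma(i)}$ in the underlying tensor and then substituting $v_{j}$ for $e_{j}$ is the same as substituting $v_{\sigma(i)}$ for $e_{i}$ directly. For (\ref{Rel2}), if $r_{i}=0$ then $e_{i}$ does not appear in the tensor underlying $\alpha$; hence $\psi$ ignores the $i$-th argument and $\alpha$ coincides with its image in $\Gamma(RT,\mathcal{E}_{n-1})$. Relation (\ref{Rel3}) is the $r_{i}$-fold multilinearity of tensor products: $(\lambda v_{i})^{\otimes r_{i}}=\lambda^{r_{i}}v_{i}^{\otimes r_{i}}$, and the scaling passes through $\orb$ and $\gamma$ by linearity.

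Relation (\ref{Rel4}) is the key stabilizer-enlargement step. Substituting $v_{i}=v_{i+1}=v$ identifies the $e_{i}$-block and the $e_{i+1}$-block into a single block of $r_{i}+r_{i+1}$ copies of $v$. The commutativity hypothesis $(i,i+1)^{\ast}\varphi=\varphi$ is exactly what is required so that the tree $\rho$ is symmetric across these two blocks, and hence the stabilizer of the substituted tensor in $\SymGrp{r_{1}+\ldots+r_{n}}$ contains the full symmetric group $\SymGrp{r_{i}+r_{i+1}}$ permuting the merged block. Since the stabilizer of the pre-substitution tensor only contained $\SymGrp{r_{i}}\times\SymGrp{r_{i+1}}$ on these positions, the number of orbit elements collapsing under substitution is $[\SymGrp{r_{i}+r_{i+1}}:\SymGrp{r_{i}}\times\SymGrp{r_{i+1}}]=\binom{r_{i}+r_{i+1}}{r_{i}}$, yielding exactly the binomial coefficient of (\ref{Rel4}). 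For relation (\ref{Rel5}), one expands $(a+b)^{\otimes r_{i}}=\sum_{w\in\{a,b\}^{r_{i}}}w_{1}\otimes\ldots\otimes w_{r_{i}}$ by multilinearity and groups the $2^{r_{i}}$ monomials by their number $s$ of $a$-factors. The $\binom{r_{i}}{s}$ monomials having $s$ copies of $a$ all lie in the same $\SymGrp{r_{i}}$-orbit within the $i$-th block, and the enlargement of stabilizer from $\SymGrp{s}\times\SymGrp{r_{i}-s}$ (for the mixed tensor $e_{i}^{\otimes s}\otimes e_{i+1}^{\otimes r_{i}-s}$) to $\SymGrp{r_{i}}$ (for the pure block $e_{i}^{\otimes r_{i}}$) absorbs exactly this $\binom{r_{i}}{s}$-fold multiplicity, so the final identification with $\varphi_{r_{1},\ldots,s,r_{i}-s,\ldots,r_{n}}(v_{1},\ldots,a,b,\ldots,v_{n})$ produces no surviving binomial coefficient.

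The main obstacle I anticipate is in relation (\ref{Rel5}): one must verify carefully that the $\binom{r_{i}}{s}$-fold multiplicity arising from the binomial expansion of $(a+b)^{\otimes r_{i}}$ cancels precisely against the ratio of stabilizer sizes between the two orbit sums, so that the formula on the right-hand side of (\ref{Rel5}) emerges without a binomial coefficient. This is the same type of bookkeeping that governs (\ref{Rel4}) but used in the reverse direction, and getting the signs and indices right is the only genuinely delicate part of the argument.
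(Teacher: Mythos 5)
Your overall strategy is the right one, and it supplies far more detail than the paper, whose entire proof of this proposition is the single sentence that the identities are immediate consequences of the multi-linearity of the operadic composition: you unpack $\varphi_{r_{1},\ldots,r_{n}}(v_{1},\ldots,v_{n})=\gamma\bigl(\psi_{v_{1},\ldots,v_{n}}(\orb(\rho(e_{1}^{\otimes r_{1}}\otimes\cdots\otimes e_{n}^{\otimes r_{n}})))\bigr)$ and reduce everything to orbit--stabilizer bookkeeping for the diagonal $\SymGrp{N}$-action, $N=r_{1}+\cdots+r_{n}$. Your treatment of (\ref{Rel1}), (\ref{Rel2}) and (\ref{Rel3}) is correct. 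But there is a genuine gap in (\ref{Rel4}) and (\ref{Rel5}): the stabilizer of the tensor $\rho\otimes e_{1}^{\otimes r_{1}}\otimes\cdots\otimes e_{n}^{\otimes r_{n}}$ is not the Young subgroup $\SymGrp{r_{1}}\times\cdots\times\SymGrp{r_{n}}$ but the intersection $\Stab{}{\rho}\cap(\SymGrp{r_{1}}\times\cdots\times\SymGrp{r_{n}})$, and after the substitution $v_{i}=v_{i+1}$ it becomes $\Stab{}{\rho}\cap(\cdots\times\SymGrp{r_{i}+r_{i+1}}\times\cdots)$. The collapsing multiplicity is the index of the first intersection in the second, and this equals $\binom{r_{i}+r_{i+1}}{r_{i}}$ only when $\Stab{}{\rho}$ is large enough, e.g.\ when its image in the symmetric group of the merged block is everything. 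Your claim that the hypothesis $(i,i+1)^{\ast}\varphi=\varphi$ forces this is not correct: that hypothesis only yields one element of $\Stab{}{\rho}$ interchanging the two blocks. Concretely, let $\rho$ be the $5$-vertex tree whose root carries two chains of length two, labelled so that the word is $e_{1}\otimes e_{2}\otimes e_{2}\otimes e_{3}\otimes e_{3}$ with one chain carrying both $e_{2}$'s and the other both $e_{3}$'s. Then $(2,3)^{\ast}\varphi=\varphi$ (the automorphism swapping the chains realizes the symmetry), the pre-substitution stabilizer is trivial, the post-substitution stabilizer has order $2$, and one gets $\varphi_{1,2,2}(x,v,v)=2\,\varphi_{1,4}(x,v)$ instead of $\binom{4}{2}\varphi_{1,4}(x,v)$. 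The same issue affects (\ref{Rel5}): the $\binom{r_{i}}{s}$ monomials with $s$ factors $a$ need not lie in a single $\SymGrp{N}$-orbit of full tensors, because the group acting on the $i$-th block is the image of $\Stab{}{\rho}$ there, not $\SymGrp{r_{i}}$; already for the linear $2$-vertex $\rho$ with $r_{1}=2$ the expansion of $\varphi_{2}(a+b)$ produces the four orbit sums of $\rho(a,a)$, $\rho(a,b)$, $\rho(b,a)$, $\rho(b,b)$, one more term than the right-hand side of (\ref{Rel5}) accounts for.

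What saves the situation, and what you should say explicitly, is that (\ref{Rel4}) and (\ref{Rel5}) are only invoked in the sequel for the corolla elements $\orb F_{N}(e_{1},\ldots)$, for which $\Stab{}{F_{N}}$ is the full symmetric group on the non-root vertices. In that case all the stabilizers above really are Young subgroups; your index computation for (\ref{Rel4}) is then exact, and in (\ref{Rel5}) the $\Stab{}{\rho\otimes e_{1}^{\otimes r_{1}}\otimes\cdots}$-orbits of the $2^{r_{i}}$ expansion monomials are indexed precisely by $s$, each contributing a single orbit sum with coefficient one, which settles the cancellation you flagged as delicate. So either restrict the statement (and your proof) to $\rho$ whose stabilizer surjects onto the symmetric group of each block of repeated variables --- corollas in particular --- or add that hypothesis explicitly; as written, the stabilizer-enlargement step would fail for a general tree $\rho$.
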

\begin{proof}
These identities are immediate consequences of the multi-linearity of the operadic composition. 
\end{proof}
\begin{prp}\label{prp:RelAs}
Let $ \lbrace -;\underbrace{-,\ldots,-}_{n}\rbrace_{r_{1},\ldots,r_{n}} $ be the function defined by the corolla: 
\[ \orb F_{(\sum r_{\centerdot})}(e_{1},\underbrace{e_{2},\ldots,e_{2}}_{r_{1}},\ldots,\underbrace{e_{n+1},\ldots,e_{n+1}}_{r_{n}}). \]
We have the unit relation:
\begin{align}\label{Rel6}
\lbrace -;\rbrace = id,
\end{align}
and a distribution relation, which we formally write:
\begin{multline}\label{Rel7}
\lbrace \lbrace x;y_{1},\ldots, y_{n} \rbrace_{r_{1},\ldots,r_{n}};z_{1},\ldots, z_{m} \rbrace_{s_{1},\ldots,s_{m}} = \\
\shoveright{\sum_{s_{i}=\beta_{i}+\sum\alpha_{i}^{\centerdot,\centerdot}}\dfrac{1}{\prod (r_{j}!)} \lbrace x;\lbrace y_{1};z_{1},\ldots, z_{m} \rbrace_{\alpha_{1}^{1,1},\ldots,\alpha_{m}^{1,1}},\ldots,\lbrace y_{1};z_{1},\ldots, z_{m} \rbrace_{\alpha_{1}^{1,r_{1}},\ldots,\alpha_{m}^{1,r_{1}}},} \\ 
\shoveright{\ldots, \lbrace y_{n};z_{1},\ldots, z_{m} \rbrace_{\alpha_{1}^{n,1},\ldots,\alpha_{m}^{n,1}},\ldots,\lbrace y_{n};z_{1},\ldots, z_{m} \rbrace_{\alpha_{1}^{n,r_{n}},\ldots,\alpha_{m}^{n,r_{n}}},} \\
z_{1},\ldots, z_{m} \rbrace_{1,\ldots, 1,\beta_{1},\ldots,\beta_{m}},
\end{multline}
where, to give a sense to the latter formula, we use that the denominators $ r_{j}! $ divide the coefficient of the terms of the reduced expression which we get by applying relations (\ref{Rel1}) and (\ref{Rel4}) to simplify terms with repeated inputs on the right hand side.
\end{prp}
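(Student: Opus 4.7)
The unit relation (\ref{Rel6}) is immediate from unpacking the definition: $F_{0}$ is the one-vertex tree $\Root$, so $\orb F_{0}(x) = x$ and the abstract operation $\lbrace -;\rbrace$ is the identity on $V$. The bulk of the work is relation (\ref{Rel7}). The plan is to evaluate both sides in the free $\Gamma PreLie$-algebra generated by $x,y_1,\ldots,y_n,z_1,\ldots,z_m$, where the left-hand side is obtained by applying the monad product $\tilde\mu$ to $\orb F_{\sum s_i}\bigl(\orb A,\orb z_1,\ldots,\orb z_1,\ldots,\orb z_m,\ldots,\orb z_m\bigr)$, with $A=F_{\sum r_j}(x,y_1^{r_1},\ldots,y_n^{r_n})$ and each $z_i$ repeated $s_i$ times. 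By Theorem \ref{ThmProd}, this is computed from the operadic composite in $S(PreLie,-)$, rescaled by a ratio of stabilizer cardinalities.

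The operadic composite itself is a direct application of the $\circ_{i}$ formula in $RT$: grafting the outer corolla $F_{\sum s_i}$ at the root $x$ of $A$ amounts to choosing, for each input $z_i$, whether it remains attached to the root vertex or to one of the leaves of $A$. Indexing such a choice by $\alpha_i^{j,k}$ (the number of $z_i$'s pinned to the $k$-th copy of $y_j$, for $1\le k\le r_j$) and $\beta_i$ (the number remaining at the root), we obtain a sum over decompositions $s_i=\beta_i+\sum_{j,k}\alpha_i^{j,k}$; each resulting tree has a $\Dec$ whose top corolla is rooted at $x$, with leaves the inner corollas $F_{\sum_i\alpha_i^{j,k}}(y_j,z_\bullet^{\alpha_\bullet^{j,k}})$ and $\beta_i$ copies of $z_i$ attached directly to the root, occurring with multinomial multiplicity $\prod_i\binom{s_i}{\beta_i,\alpha_i^{1,1},\ldots,\alpha_i^{n,r_n}}$ counting attachment maps.

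Inserting this expansion into the stabilizer formula of Theorem \ref{ThmProd} and applying Proposition \ref{PropStabDec} to factor the stabilizers along the tree structure, the ratio $|\Stab{}{T}|/\bigl(|\Stab{}{\upsilon(\sTree_\bullet)}|\prod|\Stab{}{\sTree_\ell}|\bigr)$ collapses to exactly the announced denominator $\prod r_j!$, which arises from the wreath-product factor of $\SymGrp{r_j}$ permuting the $r_j$ identical $y_j$ entries of $A$. The main obstacle, and the reason for the qualification after the formula, is that individual summands may have this coefficient as a proper fraction: it is only after using relations (\ref{Rel1}) and (\ref{Rel4}) to group those inner corollas $\{y_j;z_\bullet\}$ which happen to coincide for several values of $k$ (producing further multinomial factors) that integrality is restored. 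To avoid handling this issue by hand, I would first establish the identity over $\mathbb{Q}$, where the trace map is invertible and all denominators are harmless, and then descend to an arbitrary commutative ring $\K$ via the injection $\Gamma(PreLie_{\mathbb{Z}},\mathbb{Z}[\mathcal{V}])\hookrightarrow\Gamma(PreLie_{\mathbb{Q}},\mathbb{Q}[\mathcal{V}])$ and the base-change argument already used in the proof of Theorem \ref{ThmProd}.
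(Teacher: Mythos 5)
Your proposal follows essentially the same route as the paper: it feeds the same data into Theorem \ref{ThmProd} (outer corolla $F_{s_{1}+\ldots+s_{m}}$, the inner corolla $F_{r_{1}+\ldots+r_{n}}(x,y_{1}^{r_{1}},\ldots,y_{n}^{r_{n}})$ at the root and single-vertex trees $\Root(z_{i})$ at the leaves), expands the composite by counting attachment maps indexed by the $\alpha_{i}^{j,k}$ and $\beta_{i}$, and identifies $\prod r_{j}!=\vert\Stab{}{\sTree_{0}}\vert$ as the source of the denominator, with integrality recovered by regrouping via relations (\ref{Rel1}) and (\ref{Rel4}). The only substantive difference is that the paper carries out the coefficient match explicitly (computing $\chi(\sTree)$, the three stabilizer orders, and the resulting product of binomials $\prod\binom{b^{i}_{\gamma_{i}+1}+\beta_{i}}{\beta_{i}}$ on both sides), where you assert the collapse in one sentence and then add a $\mathbb{Q}$-to-$\K$ descent that is already built into Theorem \ref{ThmProd} and hence redundant here.
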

\begin{proof}
Let $\mathcal{V}$ be a basis of $V$ and $x,y_{1},\ldots, y_{n},z_{1},\ldots, z_{m}\in \mathcal{V}$ with possible repetition, the general case follows from relation (\ref{Rel3}) and relation (\ref{Rel5}). The Proposition is an immediate consequence of the formula of Theorem \ref{ThmProd}, where we take $\upsilon= F_{s_{1}+\ldots+ s_{m}}$, 
\[\sTree_{0}= F_{r_{1}+\ldots +r_{n}}(x,\underbrace {y_{1},\ldots, y_{1}}_{r_{1}},\ldots, \underbrace {y_{n},\ldots, y_{n}}_{r_{n}})\] 
which we plug into the root of $\upsilon$ and 
\begin{align*}
\sTree_{1} & = \Root (z_{1}), \ldots, \sTree_{s_{1}}= \Root (z_{1}),\ldots, \sTree_{s_{1}+\ldots + s_{m-1}+1}\\
& = \Root (z_{m}),\ldots ,\sTree_{s_{1}+\ldots + s_{m}}= \Root (z_{m}) 
\end{align*}
which we plug into the leaves of $\upsilon$. More precisely, the expansion of the composite is a linear combination of elements of the form $\orb \sTree$ where
\begin{align*}
\sTree& =F_{\sum b^{\bullet}_{\bullet}+\sum \beta_{\bullet}}(x,\\ 
&\quad \underbrace{F_{\alpha^{1,1}_{1}+\ldots +\alpha^{1,1}_{m}}(y_{1},\underbrace{z_{1},\ldots, z_{1}}_{\alpha^{1,1}_{1}},\ldots,\underbrace{z_{m},\ldots, z_{m}}_{\alpha^{1,1}_{m}}),\ldots,F_{\alpha^{1,1}_{1}+\ldots +\alpha^{1,1}_{m}}(y_{1},\underbrace{z_{1},\ldots, z_{1}}_{\alpha^{1,1}_{1}},\ldots,\underbrace{z_{m},\ldots, z_{m}}_{\alpha^{1,1}_{m}})}_{b^{1}_{1}},\ldots\\
&\quad \underbrace{F_{\alpha^{1,\gamma_{1}}_{1}+\ldots +\alpha^{1,\gamma_{1}}_{m}}(y_{1},\underbrace{z_{1},\ldots, z_{1}}_{\alpha^{1,\gamma_{1}}_{1}},\ldots,\underbrace{z_{m},\ldots, z_{m}}_{\alpha^{1,\gamma_{1}}_{m}}),\ldots,F_{\alpha^{1,\gamma_{1}}_{1}+\ldots +\alpha^{1,\gamma_{1}}_{m}}(y_{1},\underbrace{z_{1},\ldots, z_{1}}_{\alpha^{1,\gamma_{1}}_{1}},\ldots,\underbrace{z_{m},\ldots, z_{m}}_{\alpha^{1,\gamma_{1}}_{m}})}_{b^{1}_{\gamma_{1}}}\ldots\\
& \quad  \underbrace{F_{\alpha^{n,1}_{1}+\ldots +\alpha^{n,1}_{m}}(y_{n},\underbrace{z_{1},\ldots, z_{1}}_{\alpha^{n,1}_{1}},\ldots,\underbrace{z_{m},\ldots, z_{m}}_{\alpha^{n,1}_{m}}),\ldots,F_{\alpha^{n,1}_{1}+\ldots +\alpha^{1,1}_{m}}(y_{n},\underbrace{z_{1},\ldots, z_{1}}_{\alpha^{n,1}_{1}},\ldots,\underbrace{z_{m},\ldots, z_{m}}_{\alpha^{n,1}_{m}})}_{b^{n}_{1}},\ldots\\
& \quad \underbrace{F_{\alpha^{n,\gamma_{1}}_{1}+\ldots +\alpha^{1,1}_{m}}(y_{n},\underbrace{z_{1},\ldots, z_{1}}_{\alpha^{n,\gamma_{n}}_{1}},\ldots,\underbrace{z_{m},\ldots, z_{m}}_{\alpha^{n,\gamma_{n}}_{m}}),\ldots,F_{\alpha^{n,\gamma_{n}}_{1}+\ldots +\alpha^{n,\gamma_{n}}_{m}}(y_{1},\underbrace{z_{1},\ldots, z_{1}}_{\alpha^{n,\gamma_{n}}_{1}},\ldots,\underbrace{z_{m},\ldots, z_{m}}_{\alpha^{n,\gamma_{n}}_{m}})}_{b_{\gamma_{n}}^{n}}\ldots\\
&\quad  \underbrace{y_{1},\ldots,y_{1}}_{b^{1}_{\gamma_{1}+1}},\ldots, \underbrace{y_{n},\ldots,y_{n}}_{b^{n}_{\gamma_{n}+1}}, \underbrace{z_{1},\ldots,z_{1}}_{\beta_{1}},\ldots,\underbrace{z_{m},\ldots,z_{m}}_{\beta_{m}}),
\end{align*}
where $1\leq \gamma_{i}\leq r_{i}$ for all $1 \leq r \leq n$

We first compute the coefficient in front of $\orb \sTree$ by the formula of Theorem \ref{ThmProd}. We get 
\[\chi(\sTree)=\dfrac{\prod r_{\bullet}!}{\prod b^{\bullet}_{\bullet}!}\dfrac{\prod s_{\bullet}!}{\prod \alpha^{\bullet,\bullet}_{\bullet}! \prod \beta_{\bullet}!},\] 
\[\vert Stab(\upsilon(\sTree_{0},\ldots, \sTree_{s_{1}+\ldots \sTree{m}}))\vert= \prod s_{\bullet}!,\] 
and 
\[\vert \prod Stab(\sTree_{i})\vert= \prod r_{\bullet}!.\] 
If $y_{i}=z_{i}$ for all $i< t$, we have
\[\vert Stab(\sTree)\vert =\prod \alpha^{\bullet,\bullet}_{\bullet}!\prod b^{\bullet}_{\bullet}! \prod \beta_{\bullet}!\prod^{t}_{i=1} \binom{b^{i}_{\gamma_{i}+1}+ \beta_{i}}{\beta_{i}}.\] 
Thus the coefficient in front of $\orb \sTree$ is equal to $\prod^{t}_{i=1} \binom{b^{i}_{\gamma_{i}+1}+ \beta_{i}}{\beta_{i}}$. 

On the other hand in the relation (\ref{Rel7}) we first use relation (\ref{Rel1}) to sum all the terms associated to $\sTree$. We find the coefficients 
\[\dfrac{1}{\prod r_{\bullet}!}\dfrac{\prod r_{\bullet}!}{\prod b^{\bullet}_{\bullet}!}.\]
Then we apply relation (\ref{Rel4}) merging the common variables, hence we multiply the coefficient by
\[\prod b^{\bullet}_{\bullet}! \prod^{p}_{i=1} \binom{b^{i}_{\gamma_{i}+1}+ \beta_{i}}{\beta_{i}}.\] 
We obtain the same coefficient as before.
\end{proof}

\begin{xmp}\label{xmp:Rel7}
We have:
\begin{align*}
 \{\{x;y\}_{2}; y,z\}_{2,1}&= \dfrac{1}{2}(\{x;y,y,y,z\}_{1,1,2,1} + \{x;\{y;y\}_{1},\{y;y\}_{1},z\}_{1,1,1}\\
& \quad\quad + \{x;\{y;y\}_{1},y,y,z\}_{1,1,1,1} + \{x;y,\{y;y\}_{1},y,z\}_{1,1,1,1}\\
& \quad\quad + \{x;\{y;z\},y,y\}_{1,1,1}+ \{x;y,\{y;z\},y\}_{1,1,1}\\
& \quad\quad + \{x;\{y;y\},\{y;z\},y\}_{1,1,1}+ \{x;\{y;z\},\{y;y\},y\}_{1,1,1}\\
& \quad\quad + \{x;\{y;y,z\}_{1,1},y,y\}_{1,1,1}+ \{x;y,\{y;y,z\}_{1,1},y\}_{1,1,1}\\
& \quad\quad + \{x;\{y;y\}_{2},y,z\}_{1,1,1}+ \{x;y,\{y;y\}_{2},z\}_{1,1,1}\\
& \quad\quad + \{x;\{y;y\}_{2},\{y;z\}_{1}\}_{1,1}+ \{x;\{y;z\}_{1},\{y;y\}_{2}\}_{1,1}\\
& \quad\quad + \{x;\{y,y\}_{1},\{y;y,z\}_{1,1}\}_{1,1}+ \{x;\{y;y,z\}_{1,1},\{y,y\}_{1}\}_{1,1}\\
& \quad\quad + \{x;\{y;y,z\}_{2,1},y\}_{1,1}+ \{x;y,\{y;y,z\}_{2,1}\}_{1,1})\\
& \stackrel{(\ref{Rel1})}{=}\dfrac{1}{2}(\{x;y,y,y,z\}_{1,1,2,1} + \{x;\{y;y\}_{1},\{y;y\}_{1},z\}_{1,1,1}\\
& \quad\quad + 2\{x;\{y;y\}_{1},y,y,z\}_{1,1,1,1} + 2\{x;\{y;z\},y,y\}_{1,1,1}\\
& \quad\quad + 2\{x;\{y;y\},\{y;z\},y\}_{1,1,1} + 2\{x;\{y;y,z\}_{1,1},y,y\}_{1,1,1}\\
& \quad\quad + 2\{x;\{y;y\}_{2},y,z\}_{1,1,1} + 2\{x;\{y;y\}_{2},\{y;z\}_{1}\}_{1,1}\\
& \quad\quad + 2\{x;\{y,y\}_{1},\{y;y,z\}_{1,1}\}_{1,1} + 2\{x;\{y;y,z\}_{2,1},y\}_{1,1})\\
& \stackrel{(\ref{Rel4})}{=}\dfrac{1}{2}(12 \{x;y,z\}_{4,1} + 2 \{x;\{y;y\}_{1},z\}_{2,1}\\
& \quad\quad + 4\{x;\{y;y\}_{1},y,z\}_{1,2,1} + 4\{x;\{y;z\},y\}_{1,2}\\
& \quad\quad + 2\{x;\{y;y\},\{y;z\},y\}_{1,1,1} + 4\{x;\{y;y,z\}_{1,1},y\}_{1,2}\\
& \quad\quad + 2\{x;\{y;y\}_{2},y,z\}_{1,1,1} + 2\{x;\{y;y\}_{2},\{y;z\}_{1}\}_{1,1}\\
& \quad\quad + 2\{x;\{y,y\}_{1},\{y;y,z\}_{1,1}\}_{1,1} + 2\{x;\{y;y,z\}_{2,1},y\}_{1,1})\\
& = 6 \{x;y,z\}_{4,1} + \{x;\{y;y\}_{1},z\}_{2,1}\\
& \quad\quad + 2\{x;\{y;y\}_{1},y,z\}_{1,2,1} + 2\{x;\{y;z\},y\}_{1,2}\\
& \quad\quad + \{x;\{y;y\},\{y;z\},y\}_{1,1,1} + 2\{x;\{y;y,z\}_{1,1},y\}_{1,2}\\
& \quad\quad + \{x;\{y;y\}_{2},y,z\}_{1,1,1} + \{x;\{y;y\}_{2},\{y;z\}_{1}\}_{1,1}\\
& \quad\quad + \{x;\{y,y\}_{1},\{y;y,z\}_{1,1}\}_{1,1} + \{x;\{y;y,z\}_{2,1},y\}_{1,1},
\end{align*}
where we apply relation \ref{Rel1} to get our second identity and relation \ref{Rel4} to get our third identity.
\end{xmp}

\begin{dfn}
A $ Cor $-algebra is a $\mathbb{K}$-module $ V $ with a family of functions 
\[ \lbrace -;\underbrace{-,\ldots,-}_{n}\rbrace_{r_{1},\ldots,r_{n}}:V^{n+1}\longrightarrow V \] 
satisfying relations (\ref{Rel2}-\ref{Rel7}) as axioms. A morphism of $ Cor $-algebra is a linear map commuting with the operations $ \lbrace -;-,\ldots,-\rbrace_{r_{1},\ldots,r_{n}} $ i.e. 
\[ f(\lbrace -;-,\ldots,-\rbrace_{r_{1},\ldots,r_{n}})=\lbrace f(-);f(-),\ldots,f(-)\rbrace_{r_{1},\ldots,r_{n}}.\] 
\end{dfn}
\begin{prp}\label{prp:PLCor}
Let $ V $ be a $ \mathbb{K} $-module. A $ \Gamma PreLie $-algebra structure $ \gamma:\Gamma(PreLie,V)\longrightarrow V $ on $ V $ induces a natural $ Cor $-algebra structure on $ V $.
\end{prp}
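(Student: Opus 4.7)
The plan is to construct the $Cor$-algebra operations explicitly from the monad action $\gamma$. For each tuple $(r_{1},\ldots,r_{n})\in\mathbb{N}^{n}$ I would define the $(n+1)$-ary operation
\[
\lbrace -;\underbrace{-,\ldots,-}_{n}\rbrace_{r_{1},\ldots,r_{n}}:V^{n+1}\longrightarrow V
\]
to be the abstract operation $\varphi_{r_{1},\ldots,r_{n}}$ of Proposition \ref{prp:RelAs}, attached via the construction preceding Proposition \ref{prp:RelAn} to the corolla element
\[
\alpha_{r_{1},\ldots,r_{n}} = \orb F_{\sum r_{i}}\bigl(e_{0},\underbrace{e_{1},\ldots,e_{1}}_{r_{1}},\ldots,\underbrace{e_{n},\ldots,e_{n}}_{r_{n}}\bigr)\in \Gamma(PreLie,E_{n+1}),
\]
explicitly by $(v_{0},\ldots,v_{n})\mapsto \gamma(\psi_{v_{0},\ldots,v_{n}}(\alpha_{r_{1},\ldots,r_{n}}))$. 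Functoriality of $\Gamma(PreLie,-)$ makes this assignment natural in $V$.

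Next I would verify the $Cor$-algebra axioms (\ref{Rel2})--(\ref{Rel7}). The unit relation (\ref{Rel6}) is immediate since the unit of the monad $\Gamma(PreLie,-)$ is the one-vertex tree and $\gamma$ respects this unit. Relations (\ref{Rel2}), (\ref{Rel3}) and (\ref{Rel5}) are the degeneracy ($r_{i}=0$), homogeneity and additivity properties of the polynomial functor $V\mapsto \Gamma(PreLie,V)$ applied to this specific abstract operation; they have already been recorded in Proposition \ref{prp:RelAn}. Relation (\ref{Rel4}), the binomial merging of equal arguments, follows from the same proposition together with the observation that the corolla $F_{\sum r_{i}}$ is invariant under the permutation of two leaves labelled by the same variable, so that the symmetry hypothesis of (\ref{Rel4}) in Proposition \ref{prp:RelAn} is satisfied.

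The principal obstacle is relation (\ref{Rel7}), which I would derive from the monad algebra identity
\[
\gamma\circ\tilde{\mu} = \gamma\circ\Gamma(PreLie,\gamma):\Gamma(PreLie,\Gamma(PreLie,V))\longrightarrow V.
\]
I would evaluate this identity on the element obtained by plugging $\orb F_{\sum r_{i}}(x,y_{1}^{r_{1}},\ldots,y_{n}^{r_{n}})$ into the root position of an outer corolla $\orb F_{\sum s_{j}}$ whose remaining leaves carry the inputs $z_{1},\ldots,z_{m}$ with multiplicities $s_{1},\ldots,s_{m}$. The side involving $\Gamma(PreLie,\gamma)$ produces the nested composition $\lbrace\lbrace x;y_{1},\ldots,y_{n}\rbrace_{r_{1},\ldots,r_{n}};z_{1},\ldots,z_{m}\rbrace_{s_{1},\ldots,s_{m}}$, while the side involving $\tilde{\mu}$ unfolds by the explicit formula of Theorem \ref{ThmProd}, whose coefficients are ratios of stabilizer orders (computed via Proposition \ref{PropStabDec}) over the product of stabilizers of the inputs and of the outer corolla. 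The technical heart is the reconciliation of these stabilizer quotients with the multinomial coefficients $1/\prod r_{j}!$ and with the binomials that arise after merging repeated inputs by means of relations (\ref{Rel1}) and (\ref{Rel4}), as illustrated in Example \ref{xmp:Rel7}; this is precisely the combinatorial bookkeeping carried out in Proposition \ref{prp:RelAs}. Once the matching is established, relation (\ref{Rel7}) follows, completing the verification that $V$ is a $Cor$-algebra.
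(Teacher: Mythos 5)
Your proposal is correct and takes essentially the same route as the paper: the paper likewise defines $\lbrace v;w_{1},\ldots,w_{n}\rbrace_{r_{1},\ldots,r_{n}}$ as $\gamma$ applied to the orbit of the corolla $F_{r_{1}+\ldots+r_{n}}$ with repeated inputs and then simply invokes Propositions \ref{prp:RelAn} and \ref{prp:RelAs} for the relations. Your elaboration of relation (\ref{Rel7}) via the monad algebra identity and the coefficient formula of Theorem \ref{ThmProd} is exactly the content of the paper's proof of Proposition \ref{prp:RelAs}, so nothing is missing.
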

\begin{proof}
We set $ \lbrace v;w_{1},\ldots,w_{n}\rbrace_{r_{1},\ldots,r_{n}}=\gamma(\orb F_{r_{1}+\ldots +r_{n}})(v,\underbrace{w_{1},\ldots,w_{1}}_{r_{1}},\ldots,\underbrace{w_{n},\ldots,w_{n}}_{r_{n}})) $. The statements of Propositions \ref{prp:RelAn} and \ref{prp:RelAs} show that it defines a $ Cor $-algebra.
\end{proof}

Our aim is to show that when we restrict to free $ \mathbb{K} $-modules the structures of $ \Gamma PreLie $-algebra and $ Cor $-algebra are equivalent. From now on, let $ V $ be a free $ \mathbb{K} $-module with a basis $ \mathcal{V}$ endowed with a $ Cor $-algebra structure. We aim to define a $ \Gamma PreLie $-algebra structure on $ V $ i.e. we define a morphism $ \gamma:\Gamma(RT,V)\longrightarrow V $ compatible with the action of $ \Gamma(PreLie,-) $ on $ \Gamma(PreLie,V) $.
\begin{cnt}\label{cnt:structuremap}
We set 
\[ \gamma(\orb (F_{(\sum r_{\centerdot})}(x,\underbrace{y_{1},\ldots,y_{1}}_{r_{1}},\ldots,\underbrace{y_{n},\ldots,y_{n}}_{r_{n}})))=\lbrace x;y_{1},\ldots, y_{n} \rbrace_{r_{1},\ldots,r_{n}} \] 
where $ x,y_{1},\ldots, y_{n}\in V $. By the normal form any element of $ \Gamma(\mathcal{RT},\mathcal{V}) $ can be decomposed in the iterated composition of corollas, the morphism $ \gamma $ is defined on the basis by composition of the function associated to corollas and then computed iteratively. 
\end{cnt}
\begin{lmm}
Let $ V $ be a free $ \mathbb{K} $-module with a basis $ \mathcal{V} $. If $ V $ is a $ Cor $-algebra then the assignment of construction \ref{cnt:structuremap} is well defined and does not depend on the choice of the basis $ \mathcal{V} $ of the $ \mathbb{K} $-module $ V $.
\end{lmm}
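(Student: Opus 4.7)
My plan is to split the statement into two sub-claims: (i) once a basis $\mathcal{V}$ is fixed, the iterative recipe of Construction \ref{cnt:structuremap} produces a well-defined $\K$-linear map $\gamma:\Gamma(PreLie,V)\to V$; and (ii) the resulting map does not depend on the choice of $\mathcal{V}$.

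For (i), I would rely on the normal form analysis of Section \ref{GammaPreLiebasis}: every basis element $\orb\sTree\in\gbasis(\mathcal{RT},\mathcal{V})$ admits a unique decomposition as an iterated composition of corollas of the form $\orb F_{r}(x, B_1, \ldots, B_r)$, with the only ambiguity being a permutation of the non-root entries inside each corolla. Relation (\ref{Rel1}) of the $Cor$-algebra axioms (the symmetry of the corolla operations under $\sigma_{\ast}$) is exactly the invariance needed to guarantee that the iterative recipe yields a well-defined value on each basis element; $\K$-linear extension then produces the desired map on all of $\Gamma(PreLie,V)$.

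For (ii), I would introduce the intrinsic \emph{corolla operator} $\Omega_{r_1,\ldots,r_n}:V^{n+1}\to\Gamma(PreLie,V)$, defined on arbitrary inputs by $\Omega_{r_1,\ldots,r_n}(x,y_1,\ldots,y_n)\eqdef\orb F_{\sum r_i}(x,y_1^{r_1},\ldots,y_n^{r_n})$ via the multilinearity of the tensor product in $S(PreLie,V)$ composed with the projection $\orb$. A direct calculation at the tensor level shows that this operator satisfies the same formal identities (\ref{Rel1})--(\ref{Rel5}) as the abstract $Cor$-algebra operations: symmetry and degeneracy come from the $\SymGrp{}$-action on tensors, $\lambda^{r_i}$-homogeneity and the additive expansion from multilinearity, and the binomial collection of repeated inputs from counting $\SymGrp{}$-orbits in $\orb$. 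The crux is then to establish that
\[
\gamma(\Omega_{r_1,\ldots,r_n}(x,y_1,\ldots,y_n))=\lbrace x;y_1,\ldots,y_n\rbrace_{r_1,\ldots,r_n}
\]
holds for \emph{arbitrary} $x,y_i\in V$, not merely for basis elements: it is true by construction when $x$ and the $y_i$ are distinct elements of $\mathcal{V}$, and the general case follows by expanding both sides in the basis $\mathcal{V}$ and matching term by term, the expansions being governed by the same set of relations (\ref{Rel3}), (\ref{Rel4}), (\ref{Rel5}) on the two sides. Iterating this identity over the successive corolla levels of the normal form, the intrinsic characterization of $\gamma$ obtained this way depends only on the $Cor$-algebra structure on $V$, and hence does not involve $\mathcal{V}$.

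The step I expect to be the main obstacle is the exact matching of combinatorial coefficients in this last argument: when the basis expansion of the $y_i$ produces coincidences of inputs, the $\orb$-factor on the left contributes orbit-counting multiplicities while the right-hand side contributes binomial coefficients from relation (\ref{Rel4}); verifying that these agree termwise is a bookkeeping calculation in the spirit of the one performed in the proof of Theorem \ref{ThmProd}.
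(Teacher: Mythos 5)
Your proposal is correct and follows essentially the same route as the paper: the paper likewise proceeds by induction on the number of corollas in the normal form, expands the root and branches of $\Dec(\tTree)$ in a second basis via relations (\ref{Rel3}) and (\ref{Rel5}), and matches the stabilizer-index multiplicities produced by $\orb$ against the binomial coefficients supplied by relation (\ref{Rel4}) — exactly the bookkeeping step you single out as the crux. Your "intrinsic corolla operator" $\Omega_{r_1,\ldots,r_n}$ is only a repackaging of the paper's direct comparison of $\gamma_{\mathcal{V}}$ and $\gamma_{\mathcal{W}}$, not a different argument.
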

\begin{proof}
This follows from the relations (\ref{Rel1}), (\ref{Rel2}), (\ref{Rel3}), (\ref{Rel5}). More precisely given two basis $ \mathcal{V} $, $ \mathcal{W} $ of $ V $, we check  that the two maps $ \gamma_{\mathcal{V}},\gamma_{\mathcal{W}}:\Gamma(PreLie,V)\longrightarrow V $ are equal.
Let $ \tTree$ be a general element of $ \Gamma(\mathcal{RT},\mathcal{V}) $ such that $ \Dec(\tTree)=\orb F_{m}(v;\underbrace{\mathfrak{q}_{1},\ldots,\mathfrak{q}_{1}}_{t_{1}},\ldots, \underbrace{\mathfrak{q}_{r},\ldots,\mathfrak{q}_{r}}_{t_{r}}) $, for some $ \mathfrak{q}_{j}\in \Gamma(\mathcal{RT},\mathcal{V}) $, $ v\in \mathcal{V} $. Let $ \sum\limits_{j_{s}\in J_{s}}\lambda_{j_{s}}\mathfrak{p}^{j_{s}}_{s} $ be the linear decomposition of $ \mathfrak{q}_{s} $ in the basis $ \Gamma(\mathcal{RT},\mathcal{W}) $ and $ v=\sum\limits_{w_{i}\in \mathcal{W}}\xi_{i}w_{i} $. We proceed by induction on $ n $, the number of corollas appearing in the normal form of $ \tTree $. If $ n $ is equal to $ 0 $ then $ \tTree $ is the identity. Suppose the statement true at rank $ n-1 $. We then have by definition:
\begin{multline*} 
\Dec(\tTree)=\orb F_{m}(v;\underbrace{\mathfrak{q}_{1},\ldots,\mathfrak{q}_{1}}_{t_{1}},\ldots, \underbrace{\mathfrak{q}_{r},\ldots,\mathfrak{q}_{r}}_{t_{r}})=\\
\shoveright{\orb F_{m}(v;\underbrace{\sum\limits_{j_{1}\in J_{1}}\lambda_{j_{1}}\mathfrak{p}^{j_{1}}_{1},\ldots\sum\limits_{j_{1}\in J_{1}}\lambda_{j_{1}}\mathfrak{p}^{j_{1}}_{1}}_{t_{1}},\ldots, \underbrace{\sum\limits_{j_{r}\in J_{r}}\lambda_{j_{r}}\mathfrak{p}^{j_{r}}_{r},\ldots\sum\limits_{j_{r}\in J_{r}}\lambda_{j_{r}}\mathfrak{p}^{j_{r}}_{r}}_{t_{r}})=}\\ 
\shoveright{\sum\limits_{\substack{j^{h}_{k}\in J_{k}\\ \sum\limits_{u=1}^{a_{k}} s_{j^{u}_{k}} = t_{k}}}\lambda^{s_{j^{1}_{1}}}_{j^{1}_{1}}\ldots\lambda^{s_{j^{a_{r}}_{r}}}_{j^{a_{r}}_{r}} \vert \Stab{}{F}:\SymGrp{s_{j^{1}_{1}}}\times\ldots\times\SymGrp{s_{j^{a_{r}}_{r}}} \vert \orb F_{m}(v;\underbrace{\mathfrak{p}^{j^{1}_{1}}_{1},\ldots,\mathfrak{p}^{j^{1}_{1}}_{1}}_{s_{j^{1}_{1}}},}\\
\shoveright{\ldots,\underbrace{\mathfrak{p}^{j^{a_{1}}_{1}}_{1},\ldots,\mathfrak{p}^{j^{a_{1}}_{1}}_{1}}_{s_{j^{a_{1}}_{1}}}, \ldots, \underbrace{\mathfrak{p}^{j^{1}_{r}}_{r},\ldots,\mathfrak{p}^{j^{1}_{r}}_{r}}_{s_{j^{1}_{r}}},\ldots,\underbrace{\mathfrak{p}^{j^{a_{r}}_{r}}_{r},\ldots,\mathfrak{p}^{j^{a_{r}}_{r}}_{r}}_{s_{j^{a_{r}}_{r}}})=}\\
\shoveright{\sum\limits_{w_{i}\in \mathcal{W}} \xi_{i}\sum\limits_{\substack{j^{h}_{k}\in J_{k}\\ \sum\limits_{u=1}^{a_{k}} s_{j^{u}_{k}} = t_{k}}}\lambda^{s_{j^{1}_{1}}}_{j^{1}_{1}}\ldots\lambda^{s_{j^{a_{r}}_{r}}}_{j^{a_{r}}_{r}}\vert \Stab{}{C_{s_{j^{1}_{1}},\ldots,s_{j^{a_{1}}_{1}}})}:\SymGrp{s_{j^{1}_{1}}}\times\ldots\times\SymGrp{s_{j^{a_{r}}_{r}}} \vert\orb F_{m}(w_{i};\underbrace{\mathfrak{p}^{j^{1}_{1}}_{1},\ldots,\mathfrak{p}^{j^{1}_{1}}_{1}}_{s_{j^{1}_{1}}},} \\
\ldots,\underbrace{\mathfrak{p}^{j^{a_{1}}_{1}}_{1},\ldots,\mathfrak{p}^{j^{a_{1}}_{1}}_{1}}_{s_{j^{a_{1}}_{1}}}, \ldots, \underbrace{\mathfrak{p}^{j^{1}_{r}}_{r},\ldots,\mathfrak{p}^{j^{1}_{r}}_{r}}_{s_{j^{1}_{r}}},\ldots,\underbrace{\mathfrak{p}^{j^{a_{r}}_{r}}_{r},\ldots,\mathfrak{p}^{j^{a_{r}}_{r}}_{r}}_{s_{j^{a_{r}}_{r}}})
\end{multline*}
where $ \Stab{}{C_{s_{j^{1}_{1}},\ldots,s_{j^{a_{1}}_{1}}})} $ is the group 
\[ \Stab{}{\orb F_{m}(v;\underbrace{\mathfrak{p}^{j^{1}_{1}}_{1},\ldots,\mathfrak{p}^{j^{1}_{1}}_{1}}_{s_{j^{1}_{1}}},\ldots,\underbrace{\mathfrak{p}^{j^{a_{1}}_{1}}_{1},\ldots,\mathfrak{p}^{j^{a_{1}}_{1}}_{1}}_{s_{j^{a_{1}}_{1}}})}.\]
We have:
\begin{multline*} 
\gamma_{\mathcal{W}}(\tTree)=\\
\shoveright{\sum\limits_{w_{i}\in \mathcal{W}} \xi_{i}\sum\limits_{\substack{j^{h}_{k}\in J_{k}\\ \sum\limits_{u=1}^{a_{k}} s_{j^{u}_{k}} = t_{k}}}\lambda^{s_{j^{1}_{1}}}_{j^{1}_{1}}\ldots\lambda^{s_{j^{a_{r}}_{r}}}_{j^{a_{r}}_{r}} \vert \Stab{}{F}:\SymGrp{s_{j^{1}_{1}}}\times\ldots\times\SymGrp{s_{j^{a_{r}}_{r}}} \vert\lbrace w_{i};\gamma_{\mathcal{W}}(\mathfrak{p}^{j^{1}_{1}}_{1}),\ldots,\gamma_{\mathcal{W}}(\mathfrak{p}^{j^{a_{1}}_{1}}_{1}),}\\
\shoveright{\ldots,\gamma_{\mathcal{W}}(\mathfrak{p}^{j^{1}_{r}}_{r}),\ldots,\gamma_{\mathcal{W}}(\mathfrak{p}^{j^{a_{r}}_{r}}_{r}) \rbrace_{s_{j^{1}_{1}},\ldots,s_{j^{a_{r}}_{r}}}=} \\ 
\shoveright{\sum\limits_{\substack{j^{h}_{k}\in J_{k}\\ \sum\limits_{u=1}^{a_{k}} s_{j^{u}_{k}} = t_{k}}}\lambda^{s_{j^{1}_{1}}}_{j^{1}_{1}}\ldots\lambda^{s_{j^{a_{r}}_{r}}}_{j^{a_{r}}_{r}} \vert \Stab{}{F}:\SymGrp{s_{j^{1}_{1}}}\times\ldots\times\SymGrp{s_{j^{a_{r}}_{r}}} \vert \lbrace v;\gamma_{\mathcal{W}}(\mathfrak{p}^{j^{1}_{1}}_{1}),\ldots,\gamma_{\mathcal{W}}(\mathfrak{p}^{j^{a_{1}}_{1}}_{1}),}\\
\ldots,\gamma_{\mathcal{W}}(\mathfrak{p}^{j^{1}_{r}}_{r}),\ldots,\gamma_{\mathcal{W}}(\mathfrak{p}^{j^{a_{r}}_{r}}_{r}) \rbrace_{s_{j^{1}_{1}},\ldots,s_{j^{a_{r}}_{r}}}.
\end{multline*} 
Applying the $ Cor $-algebra relations (\ref{Rel4})-(\ref{Rel6}) we have:
\[ \gamma_{\mathcal{W}}(\tTree)=\lbrace v;\gamma_{\mathcal{W}}(\mathfrak{p}_{1}),\ldots, \gamma_{\mathcal{W}}(\mathfrak{p}_{r})\rbrace_{t_{1},\ldots,t_{r}}  \]
by induction hypothesis
\[ \gamma_{\mathcal{W}}(\tTree)=\lbrace v;\gamma_{\mathcal{W}}(\mathfrak{p}_{1}),\ldots, \gamma_{\mathcal{W}}(\mathfrak{p}_{r})\rbrace_{t_{1},\ldots,t_{r}}=\lbrace v;\gamma_{\mathcal{V}}(\mathfrak{p}_{1}),\ldots, \gamma_{\mathcal{V}}(\mathfrak{p}_{r})\rbrace_{t_{1},\ldots,t_{r}}= \gamma_{\mathcal{V}}(\tTree).\]
\end{proof}
\begin{dfn}
Let $ V $ be a free $ \mathbb{K} $-module with a basis $ \mathcal{V} $. Let $ \tTree $ be an element of $ \mathcal{RT} $ and $ w_{1},\ldots,w_{m}\in \mathcal{V} $. We say that an element of $ \Gamma(\mathcal{RT},\Gamma(\mathcal{RT}, \mathcal{V})) $ is simple if it is of the form $ \orb(\tTree(\Root (w_{1}),\ldots,\Root (w_{m}))) $ . 
\end{dfn}
\begin{lmm}
Let $ V $ be a free $ \mathbb{K} $-module with a basis $ \mathcal{V} $. Recall that $ \Root $ is the unique $ 1 $-tree. If $ \orb(\tTree(\Root (w_{1}),\ldots,\Root (w_{m}))) $ is a simple element then \[ \tilde{\mu}(\orb(\tTree(\Root (w_{1}),\ldots,\Root (w_{m}))))=\orb(\tTree(w_{1},\ldots,w_{m})).\]
\end{lmm}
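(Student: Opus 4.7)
The plan is to identify this equality as an instance of the unit axiom for the monad $\Gamma(PreLie,-)$. Recall from the remark preceding Example \ref{OrbVyxx} that $\orb\Root(w)=\Root\otimes w$; in other words, the assignment $w\mapsto \orb\Root(w)$ is exactly the unit $\eta_V:V\to\Gamma(PreLie,V)$ of the monad, induced by the operadic unit $\Root\in PreLie(1)$.

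With this identification in hand, I would observe that $\orb(\tTree(\Root(w_1),\ldots,\Root(w_m)))$ is precisely the image of $\orb(\tTree(w_1,\ldots,w_m))$ under the natural map $\Gamma(PreLie,\eta_V):\Gamma(PreLie,V)\to\Gamma(PreLie,\Gamma(PreLie,V))$. Indeed, $\Gamma(PreLie,\eta_V)$ operates factor-wise on $V^{\otimes m}$, replacing each $w_i$ by $\eta_V(w_i)=\orb\Root(w_i)$, and the orbit sums defining $\orb$ on the two sides are indexed by the same quotient $\SymGrp{m}/\Stab$ because the stabilizer of $\tTree(w_1,\ldots,w_m)$ in $\SymGrp{m}$ coincides with that of $\tTree(\Root(w_1),\ldots,\Root(w_m))$: wrapping each label $w_i$ inside a single-vertex tree $\Root(w_i)$ does not change which permutations of the labels fix the configuration. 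The monadic unit axiom $\tilde\mu\circ\Gamma(PreLie,\eta_V)=\id$ then delivers the desired identity.

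A more computational alternative is to invoke Theorem \ref{ThmProd} with $\upsilon=\tTree$ and $\sTree_i=\Root(w_i)$: the composite $\mu(\tTree(\Root(w_1),\ldots,\Root(w_m)))$ in $S(PreLie,V)$ reduces to the single term $\tTree(w_1,\ldots,w_m)$ with coefficient $\chi=1$. Since $|\Stab(\Root(w_i))|=1$ and, by the same stabilizer identification as above, $|\Stab(\tTree(\Root(w_1),\ldots,\Root(w_m)))|=|\Stab(\tTree(w_1,\ldots,w_m))|$, the overall multiplicative factor in the formula of Theorem \ref{ThmProd} collapses to $1$, yielding exactly $\orb(\tTree(w_1,\ldots,w_m))$.

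I do not anticipate any genuine obstacle: the only point requiring any care is the comparison of stabilizers, which is immediate from the definition of $\Stab{}{-}$ since $\Root(w_i)=\Root(w_j)$ in $S(RT,V)$ precisely when $w_i=w_j$ in $\mathcal V$. Given the choice between the two routes, I would favour the monadic one for conciseness, and mention the Theorem \ref{ThmProd} computation only as a sanity check that makes the coefficient count explicit.
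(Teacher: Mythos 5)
Your proposal is correct; in fact the paper states this lemma without any proof at all, treating it as immediate, so your argument supplies a justification the text omits. Both of your routes work. The monadic one is the cleanest: $\orb \Root(w)=\Root\otimes w$ is exactly the unit $\eta_{V}$ of the monad $\Gamma(PreLie,-)$, the element $\orb(\tTree(\Root(w_{1}),\ldots,\Root(w_{m})))$ is the image of $\orb(\tTree(w_{1},\ldots,w_{m}))$ under $\Gamma(PreLie,\eta_{V})$, and the right unit axiom $\tilde{\mu}\circ\Gamma(PreLie,\eta_{V})=\id$ finishes the argument. The one point that genuinely needs checking is the one you isolate: the orbit sums on the two sides run over the same coset space $\SymGrp{m}/\mathrm{Stab}$, which holds because $\Root(w_{i})=\Root(w_{j})$ in the basis $\basis(\mathcal{RT},\mathcal{V})$ exactly when $w_{i}=w_{j}$ in $\mathcal{V}$, so the two stabilizers coincide. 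Your alternative via Theorem \ref{ThmProd} is also sound ($\chi=1$ since grafting the one-vertex tree into each vertex returns the original tree, the stabilizers of the leaves are trivial, and the remaining ratio of stabilizer orders is $1$ by the same identification), and it has the minor virtue of staying within the computational framework the surrounding lemmas use; either version would be acceptable where the paper leaves a blank.
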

\begin{lmm}
The Construction \ref{cnt:structuremap} is compatible with unit and composition in $ \Gamma(PreLie,V) $.
\end{lmm}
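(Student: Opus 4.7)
The plan is to verify the two monadicity axioms for the structure map $\gamma:\Gamma(PreLie,V)\longrightarrow V$: compatibility with the unit $\eta_{V}:V\to\Gamma(PreLie,V)$, and compatibility with the composition $\tilde{\mu}_{V}$. The unit axiom is immediate: under our convention $\orb\Root(v)=v$, and by Construction \ref{cnt:structuremap} together with Relation (\ref{Rel6}) we have $\gamma(\orb\Root(v))=\{v;\}=v$, so $\gamma\circ\eta_{V}=\mathrm{id}_{V}$.

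For the composition axiom, I need to show that the square with top arrow $\Gamma(PreLie,\gamma)$ and left arrow $\tilde{\mu}_{V}$ commutes with $\gamma$. By $\K$-linearity and Proposition \ref{TrStabinj} applied to the $\K$-module $\Gamma(PreLie,V)$, it suffices to check this on basis elements of the form $\Xi=\orb(\upsilon(\orb\sTree_{1},\ldots,\orb\sTree_{n}))$ with $\upsilon\in\mathcal{RT}(n)$ and $\sTree_{i}\in\basis(\mathcal{RT},\mathcal{V})$. I proceed by induction on the total number of vertices of the $\sTree_{i}$'s. In the base case each $\sTree_{i}=\Root(w_{i})$ for some $w_{i}\in\mathcal{V}$, so $\Xi$ is simple in the sense of the preceding lemma, whence $\tilde{\mu}(\Xi)=\orb(\upsilon(w_{1},\ldots,w_{n}))$; since $\gamma(\orb\Root(w_{i}))=w_{i}$ by the unit step, $\Gamma(PreLie,\gamma)(\Xi)$ also equals $\orb(\upsilon(w_{1},\ldots,w_{n}))$, and the two composites with $\gamma$ agree tautologically.

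For the inductive step, whenever some $\sTree_{i}$ is not a root, I use the corolla decomposition $\Dec(\sTree_{i})=F_{r_{i}}(x_{i},B^{i}_{1},\ldots,B^{i}_{r_{i}})$ of Definition \ref{Def:Dec} together with Lemma \ref{lmmDeczero} to rewrite $\orb\sTree_{i}=\tilde{\mu}(\orb F_{r_{i}}(x_{i},\orb B^{i}_{1},\ldots,\orb B^{i}_{r_{i}}))$. Invoking the associativity of $\tilde{\mu}$ (which holds because $\Gamma(PreLie,-)$ is already a monad by the results of Section 1) lets me replace $\tilde{\mu}(\Xi)$ by $\tilde{\mu}$ applied to a three-level expression whose inner layer now consists only of simple elements over $\mathcal{V}$. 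Symmetrically, on the right-hand side I expand $\gamma$ by Construction \ref{cnt:structuremap} and apply the corolla-composition formula, Relation (\ref{Rel7}), to rewrite the iterated $Cor$-operation. The inductive hypothesis, applied to the freshly exposed inner layer of shorter trees, then matches the two sides.

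The main obstacle is the coefficient bookkeeping: the $\tilde{\mu}$-composition formula of Theorem \ref{ThmProd} produces a ratio $\chi(\sTree)|\Stab{}{\sTree}|/(|\Stab{}{\upsilon(\sTree_{1},\ldots,\sTree_{n})}|\prod_{i}|\Stab{}{\sTree_{i}}|)$, while iterating Relation (\ref{Rel7}) (and simplifying with Relations (\ref{Rel1})--(\ref{Rel5})) yields a different-looking but numerically equal expression. Checking this agreement term by term is exactly the combinatorial identity already verified in the proof of Proposition \ref{prp:RelAs}, so I plan to invoke that proposition rather than redo the calculation. Once the induction is set up with the decomposition above, the coefficient matching reduces to a direct citation of Proposition \ref{prp:RelAs}, and this completes the verification of the monad associativity diagram.
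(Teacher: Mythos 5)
Your strategy is essentially the paper's: reduce to basis elements of $\Gamma(PreLie,\Gamma(PreLie,V))$, peel a corolla off an inner tree via $\Dec$ and Lemma \ref{lmmDeczero}, use associativity of $\tilde{\mu}$ on the free algebra to absorb that corolla into the outer level, match coefficients against Relation (\ref{Rel7}) by citing Proposition \ref{prp:RelAs} and Theorem \ref{ThmProd}, and recurse down to simple elements; the unit check and the reduction to basis elements are fine. The one step that fails as written is your induction variable. When you replace $\orb \sTree_{i}$ by $\tilde{\mu}(\orb F_{r_{i}}(x_{i},\orb B^{i}_{1},\ldots,\orb B^{i}_{r_{i}}))$ and compose $F_{r_{i}}$ into the outer tree, the new inner layer consists of $\Root(x_{i})$, the branches $B^{i}_{j}$, and the untouched $\sTree_{j}$ for $j\neq i$; its total vertex count is $1+(m_{i}-1)+\sum_{j\neq i}m_{j}$, i.e.\ unchanged, so an inductive hypothesis indexed by ``total number of vertices'' never becomes applicable (and, relatedly, the inner layer after one step is not yet made of simple elements, since the branches $B^{i}_{j}$ may still be large). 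What does strictly decrease is the total number of edges of the inner trees --- it drops by $r_{i}\geq 1$ --- equivalently the total number of vertices minus the number of inner trees; inducting on that quantity, or on the multiset of sizes of the inner trees as the paper implicitly does by always splitting a tree of maximal size, repairs the argument. With that substitution the proof closes exactly as you describe.
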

\begin{proof}
This follows from the relations (\ref{Rel4}), (\ref{Rel6}), (\ref{Rel7}). More precisely, let $ v^{1}_{1},\ldots, v^{1}_{m_{1}}$,$\ldots$,$v^{n}_{1},\ldots, v^{n}_{m_{n}} $ be elements of $ \mathcal{V} $, let $ \mathtt{s} $ be an element of $ \mathcal{RT}(n) $ and for any $ i\in\lbrace 1,\ldots,n\rbrace $ let $ \sTree_{i} $ be an element of $ \mathcal{RT}(m_{i}) $ such that 
\[ \Dec(\sTree_{i}(v^{i}_{1},\ldots, v^{i}_{m_{i}})) = F_{r_{i}}(v^{i}_{1};\mathfrak{p}^{i}_{1},\ldots, \mathfrak{p}^{i}_{r_{i}}), \] 
for some $ \mathfrak{p}^{i}_{j}\in \Gamma(\mathcal{RT},\mathcal{V}) $.

We consider an element of $\Gamma(\mathcal{RT}, \Gamma(\mathcal{RT}, \mathcal{V}))$
\[ T=\orb(\mathtt{s}(\orb(\sTree_{1}(v^{1}_{1},\ldots, v^{1}_{m_{1}})),\ldots,\orb(\sTree_{n}(v^{n}_{1},\ldots, v^{n}_{m_{n}})))). \] 
We want to compute the image of $ T $ under the map 
\[ \tilde{\mu}: \Gamma(PreLie,\Gamma(PreLie,V))\longrightarrow\Gamma(PreLie,V). \] 
Our strategy is to find a linear combination of simple elements with the same image of $ T $ under the map $ \tilde{\mu} $. We suppose $ m_{i}=max\lbrace m_{j}\vert j=1,\ldots,n  \rbrace $ i.e. the tree $ \sTree_{i} $ has the highest number of vertices. The normal form of $ T $ in $ \Gamma(\mathcal{RT},\Gamma(\mathcal{RT},\mathcal{V})) $ is a composition of corollas of the form:
\[ \orb(F_{s}(\orb \sTree_{i}(v^{i}_{1},\ldots, v^{i}_{m_{i}});\mathfrak{q}_{1},\ldots, \mathfrak{q}_{t})), \]
that is the image of
\[ S = \orb(F_{s}(\orb F_{r_{i}}(v^{i}_{1};\mathfrak{p}^{i}_{1},\ldots, \mathfrak{p}^{i}_{r_{i}});\mathfrak{q}_{1},\ldots, \mathfrak{q}_{t})) \]
under the map 
\[ id\circ\tilde{\mu}:\Gamma(PreLie,\Gamma(PreLie,\Gamma(PreLie,V)))\longrightarrow \Gamma(PreLie,\Gamma(PreLie,V)). \] 
Since $ \Gamma(PreLie,V) $ is a $ \Gamma PreLie $-algebra, the following diagram commutes:
\[\xymatrix{
\Gamma(PreLie,\Gamma(PreLie,\Gamma(PreLie,V)))\ar[d]_-{id\circ \tilde{\mu}}\ar[r]^-{\tilde{\mu}\circ id} & \Gamma(PreLie,\Gamma(PreLie,V))\ar[d]^-{\tilde{\mu}}\\
\Gamma(PreLie,\Gamma(PreLie,V)))\ar[r]_-{\tilde{\mu}} & \Gamma(PreLie,V).}\]

To compute the image of $ T $ we apply first $ \tilde{\mu}\circ id $ on $ S $ as composition of corollas. The result is a linear combination of elements of $ \Gamma(PreLie,\Gamma(PreLie,V)) $ whose normal forms are compositions of corollas which have as roots 
\[\orb(\sTree_{1}(v^{1}_{1},\ldots, v^{1}_{m_{1}})),\ldots,\Root (v^{i}_{1}),\mathfrak{p}^{i}_{1},\ldots, \mathfrak{p}^{i}_{r_{i}},\ldots \orb(\sTree_{n}(v^{n}_{1},\ldots, v^{n}_{m_{n}})).\] 
Since the number of vertices of  $ \mathfrak{p}^{i}_{j} $ is strictly smaller than $ m_{i} $, repeating the same computation inductively we obtain, in a finite number of passages, a sum of simple elements of $ \Gamma(PreLie,\Gamma(PreLie,V)) $. This procedure of computing $ T $ use just the compositions of corollas and it is performed the same way by using $ Cor $-algebra relations for corollas.
\end{proof}

This verification completes the proof of the following statement. 
\begin{thm}\label{ThmRep}
The construction of Proposition \ref{prp:PLCor} induces an isomorphism between the subcategories of $ \Gamma(PreLie,-) $-algebras and of $ Cor $-algebras formed by objects with a free $ \mathbb{K} $-modules structure.\hfill $\square$
\end{thm}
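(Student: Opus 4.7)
The plan is to assemble the three preceding lemmas into the statement of the categorical isomorphism. Proposition \ref{prp:PLCor} gives a functor $\Phi$ from $\Gamma PreLie$-algebras to $Cor$-algebras via the formula
\[ \{x; y_1,\ldots,y_n\}_{r_1,\ldots,r_n} \;=\; \gamma\bigl(\orb F_{\sum r_i}(x,\underbrace{y_1,\ldots,y_1}_{r_1},\ldots,\underbrace{y_n,\ldots,y_n}_{r_n})\bigr). \]
Construction \ref{cnt:structuremap} produces (on free modules) a candidate inverse functor $\Psi$, extending the corolla operations to a structure map $\gamma:\Gamma(PreLie,V)\to V$ by iterating along the normal form of Section \ref{GammaPreLiebasis}. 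First I would invoke the two preceding lemmas: the first guarantees that $\gamma=\Psi(V,\{-;-,\ldots,-\})$ is well-defined and independent of the choice of basis $\mathcal{V}$ of $V$, while the second shows that $\gamma$ satisfies the unit and associativity axioms of the monad $\Gamma(PreLie,-)$. Consequently $\Psi$ indeed lands in the subcategory of $\Gamma PreLie$-algebras.

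Next I would verify that $\Phi$ and $\Psi$ are mutually inverse on objects. For $\Psi\circ\Phi$: the rebuilt structure map agrees tautologically with the original $\gamma$ on every corolla $\orb F_r(x,\ldots)$, and both maps are compatible with the monadic composition $\tilde{\mu}$; since every element of $\Gamma(PreLie,V)$ reduces to a unique iterated composition of corollas via the normal form, the two maps must coincide on the whole of $\Gamma(PreLie,V)$. For $\Phi\circ\Psi$: evaluating the rebuilt bracket $\{x;y_1,\ldots,y_n\}_{r_1,\ldots,r_n}$ means applying $\gamma=\Psi(V,\{-;-,\ldots,-\})$ to the corresponding corolla, which by the defining assignment of Construction \ref{cnt:structuremap} returns the original bracket.

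Finally I would check functoriality, which is formal: a linear map $f:V\to W$ commutes with the structure maps $\gamma$ if and only if it commutes with every corolla operation $\{-;-,\ldots,-\}_{r_1,\ldots,r_n}$, because the normal form expresses any element of $\Gamma(PreLie,V)$ as an iterated composite of corollas and the structure map is natural in this decomposition. The hard part has already been discharged in the preceding lemma asserting that Construction \ref{cnt:structuremap} is compatible with monadic composition: that required reducing arbitrary compositions in $\Gamma(PreLie,\Gamma(PreLie,V))$ to simple elements by induction on the number of vertices and matching the multinomial coefficients produced by Theorem \ref{ThmProd} with those appearing in relation (\ref{Rel7}). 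It is also the step where the free module hypothesis is genuinely used, since the induction relies on the explicit basis $\gbasis(\mathcal{RT},\mathcal{V})$ and on the basis-independent extension from corollas established just above; for this reason the isomorphism is stated only on the subcategories of objects whose underlying $\mathbb{K}$-module is free.
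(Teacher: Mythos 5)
Your proposal is correct and follows the same route as the paper: the paper's proof of Theorem \ref{ThmRep} consists precisely of assembling Proposition \ref{prp:PLCor} (giving the functor to $Cor$-algebras) with Construction \ref{cnt:structuremap} and the two lemmas establishing its well-definedness, basis-independence, and compatibility with the monad structure, exactly as you do. The only difference is that you spell out the mutual-inverse and functoriality checks that the paper leaves implicit, which is a reasonable expansion rather than a deviation.
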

\begin{rmk}
The previous discussion shows that the functor $ \Gamma(PreLie, -) $ corresponds to an analyseur de Lazard (see \cite{article:Lazard55}) with non-commutative variables.
\end{rmk}
\section{Examples}\label{Applications}
In this last section we give some particular examples of $ \Gamma PreLie $-algebras.
\subsection{ Brace algebras are $ \Gamma PreLie $-algebras}
We recall the definition of the operad $ Brace $ and prove that any $ Brace $-algebra is a $ \Gamma PreLie $-algebra.
\begin{dfn}
Let $ V $ be a $\mathbb{K}$-module. It is a brace algebra if it is endowed with a sequence of operations $ \langle -;\underbrace{-,\ldots,-}_{n-1}\rangle:V^{\otimes n}\longrightarrow V $, subject to the following relations:
\begin{enumerate}
\item $ \langle x;\rangle=x $,
\item 
\begin{equation*}
\langle\langle x;y_{1},\ldots,y_{n}\rangle;z_{1}\ldots,z_{r}\rangle = \sum \langle x; Z_{1},\langle y_{1};Z_{2}\rangle ,Z_{3},\ldots, Z_{2n-1},\langle y_{n};Z_{2n}\rangle,Z_{2n+1} \rangle,
\end{equation*}
where the sum runs over the partitions of the ordered set $ \lbrace z_{1}\ldots,z_{r}\rbrace $ into (possibly empty) consecutive ordered intervals $ Z_{1}\sqcup\ldots\sqcup Z_{2n+1} $. 
\end{enumerate}

The operad corresponding to brace-algebras is denoted by $ Brace $.
\end{dfn}

The $ Brace $ algebras naturally appears in the study of Hochschild complex (see for example \cite{article:LadaMarkl05}).

We embed the operad $ PreLie $ into the operad $ Brace $:
\begin{equation*}
\psi:PreLie\hookrightarrow Brace
\end{equation*}
by sending $ \lbrace-,-\rbrace $ into $ \langle -,- \rangle $.
This inclusion induces a monomorphism from the $ \Gamma PreLie $ free algebra into the $ \Gamma Brace $ free algebra which is isomorphic to the $ Brace $ free algebra, since the symmetric action on the operad $ Brace $ is free. We accordingly have an inclusion from the $ \Gamma PreLie $ free algebra into the $ Brace $ free algebra. For more details see \cite{article:Chapoton02}.

This monomorphism is given by the following correspondence:
\begin{equation*}
\Flower{n}(x,\underbrace{y_{1}, \ldots, y_{1}}_{i_{1}},\ldots,\underbrace{y_{r}, \ldots, y_{r}}_{i_{r}}) \eqdef \sum_{\sigma\in \shuffle (i_{1},\ldots,i_{r})} \sigma* \langle x;\underbrace{y_{1}, \ldots, y_{1}}_{i_{1}},\ldots,\underbrace{y_{r}, \ldots, y_{r}}_{i_{r}}\rangle,
\end{equation*}
where $\shuffle (i_{1},\ldots,i_{r})$ is the set of the $(i_{1},\ldots,i_{r})$-shuffles.
More precisely:
\begin{dfn}
We call $ n $-planar-tree an $n$-tree with an order on the set $In(\Tree,i)$ for any vertex $ i $ of the $ n $-tree $ \Tree $. Let $ \lbrace PRT(n)\rbrace $ be the $ \SymGrp{} $-module with $ PRT(n) $ generated by the $ n $-planar labelled rooted trees. We define partial compositions
\begin{equation*}
 - \circ_{i} -:PRT(m)\otimes PRT(n)\longrightarrow PRT(n+m-1),
\end{equation*}
with $ 1 \leq i\leq m $ as follows:
\begin{equation*}
 (\Tree,ord(\Tree)) \circ_{i} (\upsilon,ord(\upsilon)) \eqdef \sum_{f:In(\Tree,i)\longrightarrow ( 1,\ldots, n )}\sum_{j_{1},\ldots, j_{n}}^{\vert s(1)+1\vert,\ldots,\vert s(n)+1\vert} (\Tree\circ_{i}^{f} \upsilon,ord(j_{1},\ldots, j_{n})) ,
\end{equation*}
where $ \Tree\circ_{i}^{f} \upsilon $ is the $ n+m-1 $-tree obtained by substituting the tree $ \upsilon $ to the $i$th vertex of the tree $\tau$, by attaching the outgoing edges of this vertex in $\tau$ to the root of $\upsilon$, and the ingoing edges accordantly with the attaching map $f$. The sum runs over all these attachment maps $f:In(\tau, i)\longrightarrow (1,\ldots n )$ preserving $ ord(\Tree) $ and $ ord(\upsilon) $.
\end{dfn}
\begin{lmm}
The operad $ Brace $ is isomorphic to the operad $ PRT $.
\end{lmm}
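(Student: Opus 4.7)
The strategy is to mimic, at the planar level, the Chapoton--Livernet isomorphism $PreLie \cong RT$ which was recalled earlier in the paper. I would define an operad morphism $\psi : Brace \to PRT$ on generators by sending the brace operation
\[
\langle -; \underbrace{-,\ldots,-}_{n}\rangle
\]
to the planar corolla $C_{n+1}$ of arity $n+1$, with its root labelled $1$ and its $n$ leaves labelled $2,\ldots,n+1$ in the prescribed planar order. The unit relation $\langle x;\rangle = x$ is sent to the unary tree $\Root$, which is the unit of $PRT$, so the unit axiom is clear.

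The first main step is to check that $\psi$ respects the brace distribution relation. Under $\psi$, the left-hand side $\langle\langle x;y_{1},\ldots,y_{n}\rangle;z_{1},\ldots,z_{r}\rangle$ corresponds to the partial composition $C_{n+1}\circ_{1} C_{r+1}$ in $PRT$. By the definition of the planar partial composition recalled above, this produces a sum over planar attachment maps $f : In(C_{r+1},1) \to (1,\ldots,n)$ together with an insertion point for the $r$ new leaves relative to the $n$ existing leaves. Each such choice determines exactly an ordered partition $Z_{1}\sqcup Z_{2}\sqcup\cdots\sqcup Z_{2n+1}$ of the ordered set $\{z_{1},\ldots,z_{r}\}$ into consecutive intervals: the odd-indexed $Z_{2i+1}$ collect the leaves that remain planarly adjacent to the root $x$ (between the images of $y_{i}$ and $y_{i+1}$), while the even-indexed $Z_{2i}$ collect the leaves attached above $y_{i}$. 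This is precisely the indexing set of the brace distribution relation, so both sides match term by term.

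The second step is bijectivity. For surjectivity, one observes that any planar rooted tree decomposes uniquely as an iterated partial composition of planar corollas (exactly as in Section \ref{GammaPreLiebasis} for non-planar trees); hence every element of $PRT$ lies in the image of $\psi$. For injectivity, I would argue via a normal form: using the distribution relation repeatedly, every element of the free brace algebra on $n$ generators can be rewritten as a linear combination of ``fully nested'' brace monomials indexed by planar rooted trees on $n$ vertices, and this rewriting is compatible with $\psi$. Comparing bases on both sides, $\psi$ is then a term-by-term bijection on each arity.

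The main obstacle will be the bookkeeping in the verification of the distribution relation: one must match, on the one hand, the $2n+1$ consecutive intervals $Z_{1},\ldots,Z_{2n+1}$ in the brace formula and, on the other hand, the data of a planar attachment map $f$ together with the interleaving of the new leaves with the existing ones in $C_{n+1}$. Once this combinatorial dictionary is set up cleanly (with careful attention to the outgoing versus incoming edges of each vertex and to the preservation of the planar orders required in the definition of the $PRT$ composition), both the axiom check and the normal form argument reduce to bookkeeping, and the isomorphism $Brace\cong PRT$ follows. One may also invoke \cite{article:Chapoton02}, already cited, where a closely related combinatorial description is treated.
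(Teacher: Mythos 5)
The paper does not actually prove this lemma: its ``proof'' is a one-line citation to Foissy. Your outline is therefore not comparable to an argument in the paper; it is a reconstruction of the standard proof from the literature, and as a strategy it is the right one (generators to planar corollas, the brace distribution relation matched against the planar partial composition, then a normal-form/basis comparison). Two points need fixing before it would count as a proof. First, with the paper's convention that $\tau\circ_{i}\upsilon$ substitutes $\upsilon$ into the $i$-th vertex of $\tau$, the image of $\langle\langle x;y_{1},\ldots,y_{n}\rangle;z_{1},\ldots,z_{r}\rangle$ is $C_{r+1}\circ_{1}C_{n+1}$ (the inner corolla is plugged into the root of the outer one), not $C_{n+1}\circ_{1}C_{r+1}$ as you wrote; relatedly, the attachment maps on $In(C_{r+1},1)$ must land in all $n+1$ vertices of $C_{n+1}$, including the root $x$, since that is exactly what produces the odd-indexed intervals $Z_{2i+1}$. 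The dictionary you describe (consecutive ordered intervals $\leftrightarrow$ order-preserving planar attachments plus interleavings) is correct once these are straightened out.

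Second, and more substantially, the injectivity step is where the real content lies and your sketch only gestures at it. ``Using the distribution relation repeatedly'' to reach fully nested monomials shows at best that $Brace(n)$ is \emph{spanned} by monomials indexed by planar rooted trees on $n$ labelled vertices; to conclude injectivity you must either (a) verify that this rewriting terminates and that the resulting normal forms are linearly independent in $Brace(n)$, or (b) combine the spanning statement with the surjectivity of $\psi$ and the count $\dim PRT(n)=n!\,C_{n-1}$ (Catalan number), which forces the spanning set to be a basis and $\psi$ to be an isomorphism. Option (b) is the cleanest way to close your argument and is essentially what the cited reference does. As written, ``comparing bases on both sides'' presupposes that the normal forms already form a basis of $Brace(n)$, which is the thing to be proved.
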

\begin{proof}
See \cite{article:Foissy10}.
\end{proof}
\begin{prp}\label{prp:braceGammaPreLie}
The action of symmetric groups on the operad $Brace$ is free. The brace algebras therefore coincide with $ \Gamma Brace $-algebras for any field and any brace-algebra inherits a $\Gamma PreLie$-algebra structure. More precisely we have a morphism from $ \Gamma (PreLie,V) $ into $ S(Brace,V) $, and we can make it explicit:
\begin{equation*}
\lbrace x;y_{1},\ldots ,y_{n}\rbrace_{r_{1},\ldots ,r_{n}}\mapsto\sum_{\sigma\in \shuffle(r_{1},\ldots ,r_{n})}\langle x;\overline{y_{\sigma(1)}},\ldots,\overline{y_{\sigma(r_{1}+\ldots +r_{n})}}\rangle,
\end{equation*}
where the ordered set $ (\overline{y_{1}},\ldots,\overline{y_{r_{1}+\ldots +r_{n}}}) $ is $ (\underbrace{y_{1},\ldots,y_{1}}_{r_{1}},\ldots,\underbrace{y_{n},\ldots,y_{n}}_{r_{n}}) $. \hfill $\square$
\end{prp}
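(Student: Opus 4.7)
First I would verify that the action of the symmetric groups on $Brace$ is free. Using the identification $Brace \cong PRT$ from the preceding lemma, this reduces to the observation that a labelled planar rooted tree has no nontrivial automorphism: the planar structure rigidifies the positions of all vertices, so any non-identity permutation of the labels produces a distinct planar labelled tree. Hence each $Brace(n)$ is a free $\K[\SymGrp{n}]$-module.

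From freeness, the trace map $Tr\colon S(Brace,-) \to \Gamma(Brace,-)$ is a natural isomorphism: on each summand $Brace(n)\otimes V^{\otimes n}$ with free diagonal $\SymGrp{n}$-action, invariants and coinvariants agree via the norm. Since $Tr$ is already a morphism of monads, this promotes to an isomorphism of monads, so brace-algebras coincide with $\Gamma Brace$-algebras. The operad inclusion $\psi\colon PreLie \hookrightarrow Brace$ then induces a morphism of monads $\Gamma(\psi)\colon \Gamma(PreLie,-)\to\Gamma(Brace,-)$; composing with $Tr^{-1}$ yields a map of monads $\Gamma(PreLie,-)\to S(Brace,-)$, and pulling back along it would equip every brace-algebra with a canonical $\Gamma PreLie$-algebra structure.

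For the explicit formula I would set $N = r_1 + \cdots + r_n$ and let $\beta_0 \in Brace(N+1)$ denote the planar corolla with root labelled $1$ and leaves labelled $2, \ldots, N+1$ in their planar order; from the isomorphism $Brace\cong PRT$ one has $\psi(F_N)=\sum_{\tau}\tau\cdot\beta_0$ where $\tau$ ranges over the subgroup $\SymGrp{N}\subset\SymGrp{N+1}$ permuting leaf labels. Writing $v_0 = x\otimes\underbrace{y_1\otimes\cdots\otimes y_1}_{r_1}\otimes\cdots\otimes\underbrace{y_n\otimes\cdots\otimes y_n}_{r_n}$ and unfolding
\[
\Gamma(\psi)\bigl(\orb F_N(x,\ldots)\bigr) = \sum_{\rho}\rho\cdot\bigl(\psi(F_N)\otimes v_0\bigr) = \sum_{\rho,\tau}\rho\tau\beta_0\otimes\rho v_0,
\]
where $\rho$ ranges over coset representatives of the stabilizer of $F_N\otimes v_0$ in $\SymGrp{N+1}$, I would reindex by $\sigma = \rho\tau\in\SymGrp{N+1}$ to organize the double sum into $\sum_{\sigma'\in\shuffle(r_1,\ldots,r_n)}\orb(\beta_0\otimes\sigma'\cdot v_0)$ in $\Gamma(Brace,V)$. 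Freeness of the $\SymGrp{}$-action on $\beta_0$ makes each stabilizer on the $Brace$-side trivial, so $Tr^{-1}$ returns the coset class $\langle x;\bar{y}_{\sigma(1)},\ldots,\bar{y}_{\sigma(N)}\rangle$ from each orbit, giving the stated formula.

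The main obstacle is the combinatorial bookkeeping in this last step: the $r_1!\cdots r_n!$ multiplicity that naively arises from permuting repeated leaves in $\sum_\tau\tau\cdot\beta_0$ must cancel exactly against the index $r_1!\cdots r_n!$ of the stabilizer of $F_N\otimes v_0$ inside $\SymGrp{N+1}$, so that the shuffle sum appears with coefficient one. Working over a general commutative ring forces this cancellation to be verified at the level of cosets rather than by dividing by the factorials.
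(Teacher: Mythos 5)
Your proposal is correct and follows essentially the same route as the paper, which justifies the proposition by the preceding discussion: freeness of the symmetric action on $Brace\cong PRT$ (planar labelled trees have no automorphisms), the resulting identification $S(Brace,-)\cong\Gamma(Brace,-)$ via the trace, and the shuffle formula obtained by pushing the corolla through $PreLie\hookrightarrow Brace$. Your explicit coset/stabilizer bookkeeping for why the shuffle sum carries coefficient one is a detail the paper delegates to the references, but it matches the intended argument.
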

\begin{ntt}
Let $ (P,\mu,\eta) $ be a connected operad. Let $\phi:\lbrace 1,\ldots r\rbrace \rightarrow \lbrace 1,\ldots, n\rbrace$ be an injective function. Let  $p\in P(n)$ and $q_{i}\in P(m_{i})$ where $i\in\lbrace 1,\ldots, r\rbrace$. We denote by $p\circ_{\phi}(q_{1},\ldots,q_{r})$ the following element of $P$:
\[\mu(p\otimes x_{1}\otimes\ldots\otimes x_{r})\]
where 
\[x_{j}=\begin{cases} q_{i}  & \hbox{if $ j=\phi(i)$ for some $i$,}  \\
1 & \hbox{the operadic unit, otherwise.}
\end{cases}\]
\end{ntt}
\begin{xmp}
Let $ (P,\mu,\eta) $ be a connected operad. It is a well known fact that the $\mathbb{K}$-module $\bigoplus_{i}P(i)$ is a $ PreLie $-algebra. This structure is induced by a $Brace$-algebra structure. Therefore the $PreLie$-algebra structure extends to a $\Gamma PreLie$-algebra structure. More explicitly:
\[\lbrace p;q_{1},\ldots,q_{m}\rbrace_{r_{1},\ldots r_{m}}=\sum_{\phi\in Sh_{n}(r_{1},\ldots, r_{m})}p\circ_{\phi}(\underbrace{q_{1},\ldots,q_{1}}_{r_{1}},\ldots,\underbrace{q_{m},\ldots,q_{m}}_{r_{m}})\]
where $Sh_{n}(r_{1},\ldots, r_{m})$ is the set of injective functions from $\lbrace 1,\ldots, r_{1}+\ldots +r_{m}\rbrace$ to $ \lbrace 1,\ldots, n\rbrace $ such that they are $(r_{1},\ldots ,r_{m})$-shuffle when we identify their image with $\lbrace 1,\ldots, r_{1}+\ldots +r_{m}\rbrace$.
\end{xmp}
Significant examples of $ PreLie $-algebras are associated to $ PreLie $-systems (see \cite{article:Gerstenhaber63} and \cite{GerstenhaberVoronov95}). We revisit the definition of this notion and we check that any $PreLie$-system gives rise to a $\Gamma PreLie$-algebra.
\begin{dfn}
Let $ \mathfrak{S}^{\centerdot} $ be a $ \mathbb{N} $-graded free $\mathbb{K}$-module. A $ PreLie $-system on $ \mathfrak{S}^{\centerdot} $ is a family of maps:
\begin{equation*}
\circ_{k}:\mathfrak{S}^{n}\otimes \mathfrak{S}^{m}\longrightarrow \mathfrak{S}^{n+m-1}
\end{equation*}
for any $ 1\leq k \leq n $, such that for any $ f\in\mathfrak{S}^{n}  $, $ g\in\mathfrak{S}^{m} $, and $ h\in\mathfrak{S}^{l} $ we have:
\begin{equation*}
f\circ_{u}(g\circ_{v}h=(f\circ_{u}g)\circ_{v+u-1}h
\end{equation*}
for any $ 1\leq u \leq n $ and $ 1\leq v \leq m $, and
\begin{equation*}
(f\circ_{u}g)\circ_{v+m-1}h=(f\circ_{v}h)\circ_{u}g
\end{equation*}
for any $ 1\leq u < v \leq n $.
\end{dfn}
\begin{prp}
Let $ f $ be an element of $\mathfrak{S}^{m} $ and $ g_{1},\ldots, g_{n} $ be elements of $ \mathfrak{S}^{\centerdot} $ with $ n\leq m $. We define:
\begin{equation*}
\langle f;g_{1},\ldots, g_{n} \rangle = \sum_{1\leq i_{1}<\ldots < i_{n}\leq m}(\ldots((f\circ_{i_{n}}g_{n})\ldots) \circ_{i_{1}}g_{1});
\end{equation*}
Then  $ \mathfrak{S}^{\centerdot} $ endowed with these operations is a $ Brace $-algebra, and hence inherits a $\Gamma PreLie$-algebra structure.\hfill $\square$
\end{prp}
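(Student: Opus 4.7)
By Proposition \ref{prp:braceGammaPreLie}, any $Brace$-algebra carries a canonical $\Gamma PreLie$-algebra structure, so it is enough to verify that the operations $\langle f; g_{1},\ldots,g_{n}\rangle$ defined above satisfy the two brace axioms. The $\Gamma PreLie$ operations on $\mathfrak{S}^{\centerdot}$ are then given by the explicit shuffle formula of Proposition \ref{prp:braceGammaPreLie}.

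The unit axiom $\langle f;\rangle = f$ is immediate from the definition, since for $n=0$ the indexing set $\lbrace 1\leq i_{1}<\cdots<i_{n}\leq m\rbrace$ has a single (empty) element and the iterated composition reduces to $f$.

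For the distribution relation, I plan to expand both sides in terms of iterated partial compositions and match them using the two axioms of a $PreLie$-system. Expanding the outer brace on the left gives
\[ \langle\langle f;g_{1},\ldots,g_{n}\rangle; h_{1},\ldots, h_{r}\rangle = \sum_{j_{1}<\cdots<j_{r}}\sum_{i_{1}<\cdots<i_{n}} \bigl(\cdots\bigl((\cdots(f\circ_{i_{n}}g_{n})\cdots\circ_{i_{1}}g_{1})\circ_{j_{r}}h_{r}\bigr)\cdots\circ_{j_{1}}h_{1}\bigr), \]
where the indices $j_{\ell}$ run from $1$ to $m+\sum(|g_{k}|-1)$. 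Each index $j_{\ell}$ either lands strictly between the positions occupied by the substituted $g_{k}$'s in $f$, or lands inside one such $g_{k}$. In the first case, the second axiom $(f\circ_{u}g)\circ_{v+m-1}h = (f\circ_{v}h)\circ_{u}g$ lets me slide $\circ_{j_{\ell}} h_{\ell}$ past the $\circ_{i_{k}}g_{k}$'s until it sits directly on $f$; in the second case, the first axiom $f\circ_{u}(g\circ_{v}h) = (f\circ_{u}g)\circ_{v+u-1}h$ lets me absorb $\circ_{j_{\ell}}h_{\ell}$ into a partial composition $g_{k}\circ_{v}h_{\ell}$. Iterating these moves, the $h$'s split into consecutive ordered intervals $Z_{1},Z_{3},\ldots,Z_{2n+1}$ attached directly to $f$ at the positions separating the $g_{k}$'s, and intervals $Z_{2},Z_{4},\ldots,Z_{2n}$ absorbed into $g_{1},\ldots,g_{n}$; after collecting terms the resulting expression reproduces the right hand side of the brace relation, where each block $\langle g_{k};Z_{2k}\rangle$ is precisely the definition applied to the $Z_{2k}$'s absorbed into $g_{k}$.

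The main obstacle is the combinatorial bookkeeping: tracking the index shifts $v+u-1$ and $v+m-1$ in the two $PreLie$-system relations and verifying that the induction on the number of $h$'s genuinely produces each of the ordered interval partitions $Z_{1}\sqcup\cdots\sqcup Z_{2n+1}$ exactly once (with no leftover signs or multiplicities, since we are in the unsymmetrised setting). This is essentially the classical verification of Gerstenhaber--Voronov that a $PreLie$-system carries a $Brace$-structure; once it is in hand the $\Gamma PreLie$-algebra structure is automatic from Proposition \ref{prp:braceGammaPreLie}.
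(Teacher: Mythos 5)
Your strategy is correct, and it is worth noting that the paper itself offers no proof of this proposition: it is stated with a closing square, the intended justification being precisely the classical fact (Gerstenhaber, Gerstenhaber--Voronov) that a $PreLie$-system carries a brace structure, combined with Proposition \ref{prp:braceGammaPreLie} for the passage to $\Gamma PreLie$-algebras. Your sketch therefore supplies the outline the paper leaves implicit, and the outline is the right one: the unit axiom is trivial, and for the distribution relation the two $PreLie$-system axioms are exactly what is needed to either absorb a $\circ_{j_{\ell}}h_{\ell}$ into some $g_{k}$ (first axiom, giving the blocks $\langle g_{k};Z_{2k}\rangle$) or slide it past the $g_{k}$'s onto $f$ (second axiom, giving the blocks $Z_{2k\pm 1}$); the monotonicity constraints $i_{1}<\cdots<i_{n}$ and $j_{1}<\cdots<j_{r}$ are what force the absorbed and slid $h$'s to form consecutive ordered intervals, so each interval partition $Z_{1}\sqcup\cdots\sqcup Z_{2n+1}$ appears exactly once. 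The one thing to be honest about is that you have only sketched this last bijective bookkeeping step rather than carried it out; since you correctly identify it as the Gerstenhaber--Voronov computation and the paper itself defers to that source, this is an acceptable level of detail, but a self-contained write-up would need the induction on $r$ (or on the number of $h$'s not yet normalized) made explicit, including the index shifts $v+u-1$ and $v+m-1$.
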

\begin{xmp}
Let $ P $ a connected operad. The $PreLie$-algebra structure on the module $\bigoplus_{n}P(n)$ of the Example \ref{XmpPreLieAlg} (1) is clearly induced by a $PreLie$-system therefore it extends to a $\Gamma PreLie$-algebra structure.
\end{xmp}

\subsection{Dendriform algebras are $ \Gamma PreLie$-algebras}
I. Dokas proved in \cite{article:Dokas13} that dendriform algebras in positive characteristic admits a $p$-restricted $PreLie$-algebra structure (and hence a $\Lambda PreLie$-algebra structure by Theorem \ref{thmpRes}). We prove that any dendriform algebra is a $ \Gamma PreLie $-algebra.
\begin{dfn}
A dendriform algebra, denoted by $ Dend $-algebra, is a free $\mathbb{K}$-module $ A $ endowed with two binary operations $ < , >:A\otimes A\longrightarrow A $, such that:
\begin{gather*}
(x < y)< z = x <(y\ast z),\\
(x > y)< z = x > (y < z),\\
(x \ast y)> z = x >(y > z),\\
\end{gather*}
where $ x \ast y = x > y + y < x   $. It is easy to show that $ \ast $ is associative.

The category of dendriform algebras is governed by an operad denoted $ Dend $.
\end{dfn}

Dendriform algebras were introduced by J.L. Loday in \cite{incollection:Loday01} as Koszul dual of diassociative algebras in the study of K-Theory periodicity. They appear naturally in other fields such as combinatorial algebra, physics and algebraic topology. 
\begin{dfn}
Let $ (A,<,>) $ be a $ Dend $-algebra. We define the following binary operation $ \lbrace x,y\rbrace = x > y - y < x $. 
\end{dfn}
\begin{prp}
Let $ (A,<,>) $ be a $ Dend $-algebra. Then $ (A,\lbrace-,-\rbrace) $ is a $p$-restricted $PreLie$-algebra.
\end{prp}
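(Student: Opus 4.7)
The proposition asserts two properties of the bracket $\{x,y\} = x>y - y<x$: that it satisfies the $PreLie$-relation, and that in characteristic $p$ it additionally satisfies the Dzhumadil'daev identity of Definition \ref{dfn:pRest}. My plan is to verify these two properties separately.

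For the $PreLie$-relation, the plan is a direct calculation. Substituting the definition into $\{\{x,y\},z\} - \{x,\{y,z\}\}$ produces eight monomials of length three in the operations $<$ and $>$. Applying the dendriform axioms (A1), (A2), and (A3) to each mixed associator reduces all terms to a canonical form, after which inspection shows that the result is invariant under the swap $y\leftrightarrow z$, which is the $PreLie$-relation. This is a classical bookkeeping verification; the resulting symmetry is what prevents one from getting a full associative bracket but is exactly enough for $PreLie$.

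For the $p$-restricted identity, my plan is to invoke Lemma \ref{prppRes}: it suffices to verify that the operadic element $F_{p}(x,y,\ldots,y)$ acts as zero on $A$. From the proof of Lemma \ref{prppRes} combined with Proposition \ref{prpincrOrder}, in characteristic $p$ this corolla satisfies
\[ F_{p}(x,y,\ldots,y) = \{\{\cdots\{x,y\},\cdots\},y\}_{p} - \{x,\{\cdots\{y,y\}\cdots\}_{p}\}, \]
so the task reduces to verifying this equality inside $A$. My plan is to expand both iterated brackets in the operations $<$ and $>$, and to organise the resulting dendriform monomials around the associative $p$-th power $y^{\ast p}$ for the product $\ast = <+>$; the induced commutator coincides with $x\ast y - y\ast x$, so the image of the identity in the associated Lie algebra is Jacobson's restricted-Lie relation $\mathrm{ad}_{y}^{\,p} = \mathrm{ad}_{y^{\ast p}}$ and holds automatically.

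The main obstacle is the $PreLie$-level refinement of this Lie statement: the iterated $PreLie$-brackets, once expanded in $<$ and $>$, range over all planar-binary patterns of $p$ copies of $y$ interacting with $x$, and one must control the difference of the two sides not only modulo commutator-antisymmetric terms but on the nose. This requires a careful multinomial analysis in which the non-cancelling coefficients all turn out to be multiples of $p$, in a combinatorial pattern parallel in spirit to the analysis of Lemma \ref{lmmGenKer}. The verification is carried out explicitly by Dokas in \cite{article:Dokas13}, to whose computation I would appeal to conclude.
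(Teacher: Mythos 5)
Your proposal is correct and ends up in the same place as the paper, whose entire proof of this proposition is the citation of \cite{article:Dokas13}: the combinatorial verification of the $p$-restricted identity that you ultimately defer to Dokas is exactly what the paper also outsources. Your intermediate reduction via Lemma \ref{prppRes} is valid but essentially circular (it translates the $p$-restricted identity into the vanishing of $F_{p}(x,y,\ldots,y)$ and then immediately back into the same identity), so the only substantive content in both arguments is the appeal to Dokas's computation.
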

\begin{proof}
See \cite{article:Dokas13}.
\end{proof}
We deduce from Theorem \ref{thmpRes} and the previous proposition that a $Dend$-algebra is a $\Lambda PreLie$-algebra.
\begin{prp}
Let $ V $ be a free $\mathbb{K}$-module then the $ PreLie $-algebra structure defined in $ S(Dend,V) $ extends to a $ \Gamma PreLie $-algebra structure.
\end{prp}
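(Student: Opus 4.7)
The plan is to reduce the statement to Proposition \ref{prp:braceGammaPreLie} by showing that any $Dend$-algebra carries a natural $Brace$-algebra structure whose induced $PreLie$-bracket coincides with the bracket $\{x,y\}=x>y-y<x$ coming from the $Dend$-structure. Once this is achieved, the proposition equips the $Brace$-algebra $S(Dend,V)$ with a $\Gamma PreLie$-algebra structure, and the compatibility of $PreLie$-brackets guarantees that this $\Gamma PreLie$-structure extends the one under consideration.

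The key technical input is the classical construction, due to M. Ronco, of a $Brace$-algebra structure on any $Dend$-algebra $(A,<,>)$. The brace operations $\langle x;y_{1},\ldots,y_{n}\rangle$ are explicit signed combinations of iterated $<$, $>$ and $\ast = <+>$ operations, with lowest cases $\langle x\rangle=x$ and $\langle x;y\rangle=x>y-y<x$; equivalently, there is an operad morphism $Brace\longrightarrow Dend$ sending the generator $\langle -;-\rangle$ to $x>y-y<x$ and factoring the inclusion $PreLie\hookrightarrow Dend$ through the inclusion $\psi:PreLie\hookrightarrow Brace$ used in Proposition \ref{prp:braceGammaPreLie}. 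Transferring along the induced morphism of monads $S(Brace,-)\longrightarrow S(Dend,-)$ gives the required $Brace$-algebra structure on $S(Dend,V)$, and Proposition \ref{prp:braceGammaPreLie} then delivers a $\Gamma PreLie$-algebra structure whose underlying $PreLie$-bracket is, by construction, $x>y-y<x$.

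The principal obstacle is the verification that Ronco's formulas actually satisfy the brace relations, equivalently the construction of the operad morphism $Brace\longrightarrow Dend$. This is combinatorially involved but entirely classical and well-documented, so in practice I would cite Ronco's original article and the subsequent literature on brace/dendriform structures rather than reproduce the computation. If one preferred to avoid the detour through $Brace$-algebras, an alternative would be to write down the operations $\{x;y_{1},\ldots,y_{n}\}_{r_{1},\ldots,r_{n}}$ directly inside $S(Dend,V)$ as sums over shuffle insertions of the $y_{i}$ into $x$ (mirroring the explicit formula in Proposition \ref{prp:braceGammaPreLie}) and check the relations of Theorem \ref{ThmRep} by hand, but this amounts to the same combinatorics packaged less conceptually.
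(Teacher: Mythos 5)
Your proof is correct and follows essentially the same route as the paper: the paper likewise obtains the $\Gamma PreLie$-structure on $S(Dend,V)$ from the chain of operad morphisms $PreLie\longrightarrow Brace\longrightarrow Dend$ together with Proposition \ref{prp:braceGammaPreLie}. The only difference is one of emphasis, since you spell out (and propose to cite) Ronco's construction of the $Brace$-structure on a dendriform algebra, whereas the paper simply invokes the arrow $Brace\longrightarrow Dend$ without further justification.
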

\begin{proof}
Let $ V $ be a free $\mathbb{K}$-module then the $ \Gamma PreLie $-algebra structure defined on $ S(Dend,V) $ is given by the inclusions $ PreLie \longrightarrow Brace \longrightarrow Dend$, and the construction of Proposition \ref{prp:braceGammaPreLie}.
\end{proof}

By the same kind of argument we prove that any Zinbiel algebra is a $ \Gamma(PreLie,-) $-algebra. Zinbiel algebras are encoded by the operad $ Zinb $ which was introduced by J.L. Loday in \cite{article:Loday95}, it is the Koszul dual of the operad $ Leib $ which encodes the Leibniz algebras. Then the cohomology of a $ Leib $-algebra inherits a $ Zinb $-algebra structure.
\begin{dfn}
Let $ A $ be a free $\mathbb{K}$-module, then it is a Zinbiel algebra if it is endowed with a bilinear product $ \circ $ such that:
\begin{equation*}
(a\circ b)\circ c = a\circ (b\circ c + c\circ b).
\end{equation*}
\end{dfn}
\begin{prp}
Let $ V $ be a free $\mathbb{K}$-module then $ S(Zinb,V) $ is a $ \Gamma PreLie $-algebra.
\end{prp}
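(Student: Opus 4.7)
The plan is to adapt verbatim the strategy used in the previous proposition for $S(Dend,V)$. The key point is that, if we can exhibit an operad morphism $Brace \longrightarrow Zinb$, then the free Zinbiel algebra $S(Zinb,V)$ acquires a canonical $Brace$-algebra structure by restriction of scalars along this morphism, and Proposition \ref{prp:braceGammaPreLie} immediately upgrades this $Brace$-algebra structure to a $\Gamma PreLie$-algebra structure, as desired.

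To construct $Brace \longrightarrow Zinb$, I would factor it through the dendriform operad as $Brace \longrightarrow Dend \longrightarrow Zinb$. The first morphism is the classical embedding of brace operations into dendriform algebras due to Loday and Ronco, already invoked in the preceding proposition on $S(Dend,V)$, so I would simply cite it. For the second morphism, given a Zinbiel algebra $(A,\circ)$, I would define a dendriform structure by setting $a < b := a \circ b$ and $a > b := b \circ a$, so that $a \ast b = a \circ b + b \circ a$. The three dendriform axioms then each reduce to the Zinbiel identity $(a \circ b) \circ c = a \circ (b \circ c) + a \circ (c \circ b)$ by a short direct computation; for instance, $(x < y) < z = (x \circ y) \circ z = x \circ (y \circ z) + x \circ (z \circ y) = x < (y \ast z)$, and the other two axioms are similar.

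The only potentially technical step is the verification of the three dendriform axioms, but this is a routine bookkeeping exercise in the Zinbiel identity and I do not expect any serious obstacle. Once the chain of operad morphisms $PreLie \longrightarrow Brace \longrightarrow Dend \longrightarrow Zinb$ is in place, the conclusion follows by a direct application of Proposition \ref{prp:braceGammaPreLie}, exactly as in the preceding proposition for $S(Dend,V)$: the free Zinbiel algebra $S(Zinb,V)$ is a $Zinb$-algebra, hence a $Brace$-algebra via the composite morphism, hence a $\Gamma PreLie$-algebra.
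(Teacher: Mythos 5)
Your proof is correct and takes essentially the same route as the paper: the paper likewise deduces the statement from the preceding proposition on $S(Dend,V)$ via the chain $PreLie \longrightarrow Brace \longrightarrow Dend \longrightarrow Zinb$ together with Proposition \ref{prp:braceGammaPreLie}, merely citing the resulting comparison of $S(Dend,V)$ with $S(Zinb,V)$ in one line. The only difference is that you make explicit the dendriform structure $a<b := a\circ b$, $a>b := b\circ a$ on a Zinbiel algebra (with the standard convention $a\ast b = a<b + a>b$), which the paper leaves implicit.
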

\begin{proof}
This proposition follows from the inclusion of $ S(Dend,V) $ into $ S(Zinb,V) $.
\end{proof}
\paragraph*{Acknowledgements}
I would like to thank Benoit Fresse and Christine Vespa for their support throughout the writing of this paper.
\bibliographystyle{plain}
\bibliography{biblio}
\end{document}